\documentclass[12pt,a4paper]{elsarticle}

\usepackage[english]{babel}
\usepackage{amsmath,amsthm,booktabs,hyperref}
\usepackage{amssymb}
\usepackage{graphicx}
\usepackage{xcolor}

\usepackage{tikz}
\usetikzlibrary{
	arrows,
	backgrounds,
	calc,
	decorations.pathmorphing,
	decorations.text,
	fixedpointarithmetic,
	patterns,
	patterns.meta,
	positioning,
	arrows.meta
}

\usepackage{subcaption}
\usepackage{refcount}
\usepackage{cleveref}
\usepackage{float}
\usepackage{placeins}

\usepackage[hmargin=2.5cm,vmargin=3cm]{geometry}

\DeclareCaptionLabelSeparator{mysep}{\hspace{0.5em}}
\DeclareCaptionLabelFormat{figsub}{Fig.~\arabic{figure}(#2)}

\crefname{subfigure}{Fig.}{Figs.}
\Crefname{subfigure}{Fig.}{Figs.}

\usepackage{letterswitharrows}

\newcommand{\myvec}[1]{\arrowoverset{#1}}
\newcommand{\mycev}[1]{\arrowoverset*{#1}}

\newtheorem{theorem}{Theorem}[section]
\newtheorem{proposition}[theorem]{Proposition}
\newtheorem{lemma}[theorem]{Lemma}
\newtheorem{observation}[theorem]{Observation}
\newtheorem{corollary}[theorem]{Corollary}

\theoremstyle{definition}

\newtheorem{definition}[theorem]{Definition}
\newtheorem{problem}[theorem]{Problem}

\newtheorem{remark}[theorem]{Remark}

\newcommand{\F}{\mathbb F}

\begin{document}

	\begin{frontmatter}
		\title{Orthogonal and unitary signings of cube-like graphs}
		
		\author[XUT]{Meirun Chen}
		\author[IRIF]{Reza Naserasr}
		
		\address[XUT]{School of Mathematics and Statistics, Xiamen University of Technology, Xiamen, Fujian 361024, China. mrchen@xmut.edu.cn.}
		\address[IRIF]{Université de Paris, CNRS, F-75013 Paris, France. reza@irif.fr}

\begin{abstract}
	A unitary signing of a $d$-regular graph is a Hermitian adjacency
	matrix $M$ whose nonzero entries lie in $\{\pm1,\pm i\}$ and satisfies
	$M^2=dI$. Motivated by the work of Alon and Zheng on orthogonal and
	unitary signings of cube-like graphs, we introduce the $\Theta$-property
	for a generating set $S\subseteq\mathbb Z_2^n$: whenever three pairwise
	disjoint subsets of $S$ have the same sum, at least two of them have
	even size.
	
	We prove that every zero-free generating set with the
	$\Theta$-property gives a cube-like graph $Q_S$ admitting a unitary
	signing, and we give an explicit local formula for such a signing. For
	Sidon sets, the $\Theta$-property is also necessary, yielding a
	characterization of the Sidon cube-like graphs that admit unitary
	signings. In this setting there are exactly $2^{|S|-n}$
	switching-equivalence classes of unitary signings, and we characterize
	when a unitary signing can be chosen to be orthogonal.
	
	The $\Theta$-property admits a linear-algebraic description in terms of
	the dependency space $\mathcal D$ of $S$:
	\[
	|D_1\cap D_2|
	\equiv
	|D_1||D_2|
	\pmod2
	\qquad
	(D_1,D_2\in\mathcal D).
	\]
	Equivalently, the map
	\[
	D\longmapsto \binom{|D|}{2}\pmod2
	\]
	is linear on $\mathcal D$. For the corresponding extremal set
	problem, in which zero is permitted, this formulation yields the sharp
	bound
	\(
	|S|\leq 2n+1
	\)
	for generating sets with the $\Theta$-property. We give elementary
	constructions attaining this bound for every $n\geq3$. Under the
	additional Sidon condition, the same extremal value is attained for
	every $n\geq10$ using binary self-dual codes. The exact Sidon maximum is
	also determined for $3\leq n\leq9$.
	
\end{abstract}

		\begin{keyword}
			binary codes \sep complex-signed graphs \sep cube-like graphs\sep orthogonal signings  \sep Sidon sets \sep unitary signings.
		\end{keyword}
		
	\end{frontmatter}

	\section{Introduction}

	Several notions arising in signed graph theory, though seemingly independent, are deeply interconnected. In \cite{AACFMN13}, the authors studied graphs with $q(G)=2$, that is, graphs for which one can assign nonzero weights to the edges and choose values for the diagonal entries so that the resulting real symmetric matrix has only two distinct eigenvalues.
	Hypercubes are among their prominent examples, with weights $1$ or $-1$. 
	This signing subsequently appeared independently in the work of Huang \cite{H19} where he gave his now-famous proof of the sensitivity conjecture. The signed hypercubes in Huang's proof have also proved useful in other problems; see \cite{LNSW26} and the references therein.
	Another setting in which these ideas come together is the possible extension of the notion of strongly regular graphs to strongly regular signed graphs; see \cite{S19}, \cite{RB26} and the references therein.
	
	The signed-hypercube phenomenon naturally raises the question of how broadly it extends within the class of cube-like graphs. Alon and Zheng \cite{AZ20} considered such questions.
	In this work, we further investigate cube-like graphs that admit signings with only two distinct eigenvalues. We identify the $\Theta$-property as a general sufficient condition: whenever a generating set $S\subseteq\mathbb Z_2^n$ has the $\Theta$-property, the Cayley graph $Q_S=(\mathbb Z_2^n,S)$ admits a unitary signing. For Sidon sets this condition is also necessary, giving a complete characterization of the Sidon cube-like graphs that admit unitary signings; we also determine when the signing can be chosen to be orthogonal.
	
	The complex-signed graphs considered here may equivalently be viewed as gain graphs over the cyclic group of order four; the additive group $\mathbb Z_4=(\{0,1,2,3\},+)$ is isomorphic to the multiplicative group $\{1,i,-1,-i\}$ under the correspondence $k\mapsto i^k$. Cycle gains and their invariance under switching are standard in the theory of gain graphs; see, for example, \cite{Z89,CD22}. In the present work we use terminology modeled on signed graphs: cycles are classified as real or imaginary and, independently, as positive or negative.
	
	Before going further, we settle the notation and terminology in the next section. For some recent results on graphs with edge weightings yielding only two eigenvalues, we refer to \cite{AALMX26} and \cite{LMT26+}.

	\section{Signed and complex-signed graphs}
	
	A signed graph is a pair $(G, \sigma)$ consisting of a graph $G$ and an assignment $\sigma$ which assigns $+1$ or $-1$ to each edge of $G$. Switching at a vertex $v$ means multiplying the signs of all edges incident with $v$ by $-1$. For background on signed graphs in the context of this work, we refer to \cite{LNSW26} and references therein. 
	
	For the purposes of this paper, the following terminology is used. A \emph{complex-signed graph} is a pair $(G, \sigma)$ where $G$ is a mixed graph whose adjacencies may be edges (symmetric pairs) or arcs (non-symmetric pairs), and $\sigma$ assigns $+1,-1$ to the edges and $i,-i$ to the arcs with the convention that assigning $i$ to the arc $xy$ is equivalent to assigning $-i$ to $yx$. Thus one is free to flip the direction of an arc as long as the complex value assigned to it is conjugated. The key point here is that the adjacency matrix of $(G, \sigma)$ is then Hermitian, and thus has real eigenvalues. 
	
	We extend the notion of switching at $v$ to $(a)$-switching for each $a\in \{ +1, -1, i, -i\}$ as follows.
	
	\begin{itemize}
		\item $a=1$; edges and arcs incident with $v$ remain as they are and preserve their assignment.
		\item $a=-1$; edges and arcs incident with $v$ remain as they are, but the value assigned to them is negated (multiplied by $-1$).
		\item $a=i$; we first re-orient arcs incident with $v$ so that $v$ is the head of all the arcs at $v$, conjugating their assigned complex value if reoriented. Then we remove the orientations of all arcs with head $v$ and orient all $uv$ edges so that $v$ is their head. Then multiply each assigned value by $i$.
		\item $a=-i$; we apply both $(-1)$-switching and $(i)$-switching. 
	\end{itemize}

	In the context of signed graphs, a central concept is the sign of a cycle, which is invariant under switching. For complex-signed graphs, the cycle sign is defined analogously. Recall that a cycle is a sequence $v_1v_2\cdots v_k$, $k\geq3$, of distinct vertices such that $v_j$ is adjacent to $v_{j+1}$ for $1\leq j<k$, and $v_k$ is adjacent to $v_1$. The sequence $v_1v_kv_{k-1}\cdots v_2$ represents the same cycle, except that it is traversed in the opposite direction. The direction matters when defining the sign of a cycle $C$. Throughout this section, whenever we consider a cycle in a complex-signed graph, we regard it as equipped with one of its two possible directions of traversal. Thus, by a (directed) cycle we mean a cycle together with a chosen direction of traversal; we do not require the arcs of the mixed graph to be coherently oriented along the cycle. 
	
	\begin{definition}
		Given a cycle $C=v_1v_2\cdots v_k$ of a complex-signed graph $(G,\sigma)$, we define its sign as follows. First reverse every arc whose orientation disagrees with the direction of $C$, conjugating its assigned value. We then define $\sigma(C)=\prod_{j=1}^k \sigma(v_jv_{j+1})$ where $v_{k+1}=v_{1}$.  
	\end{definition}  
	
    The reversal of arcs in this definition is only a convenient way to write the formula; $\sigma(C)$ can equivalently be written as the product of the signs of the edges and arcs going along the cycle, conjugating the value when the direction of an arc does not match that of the cycle.
    The following is the key property of this definition.
	
	\begin{observation}
		Given a cycle $C$ of a complex-signed graph $(G,\sigma)$, an $a$-switching at a vertex, $a\in\{+1,-1,i,-i\}$, does not change the value of $\sigma(C)$.
	\end{observation}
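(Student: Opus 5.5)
The natural route is to encode the complex-signed graph in matrix language and read the switchings and the cycle sign off from that encoding. Write $A=A(G,\sigma)$ for the Hermitian adjacency matrix: $A_{xy}=\sigma(xy)$ for an edge or arc $xy$ (an arc $xy$ carrying $i$ gives $A_{xy}=i$ and $A_{yx}=-i$), and $A_{xy}=0$ otherwise. With this convention, the value of $\sigma(C)$ for the directed cycle $C=v_1v_2\cdots v_k$ is exactly $\prod_{j=1}^k A_{v_jv_{j+1}}$. This matches the definition: reading an arc against its orientation contributes the conjugate entry $A_{v_jv_{j+1}}=\overline{A_{v_{j+1}v_j}}$, which is precisely the rule "reverse and conjugate".

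Next, I will show that an $a$-switching at $v$ acts on $A$ as conjugation by $D=\mathrm{diag}(d_x)$, where $d_v=a$ and $d_x=1$ for $x\neq v$. That is, the new matrix is $A'=D^{*}AD$ (or $DAD^{*}$, depending on the orientation convention; the argument is the same). The verification goes through the four cases. For $a=\pm1$ there is nothing to check beyond the obvious sign change of the row and column of $v$. For $a=i$, I will check each type of adjacency at $v$. After the reorientation step in the definition, every arc at $v$ has head $v$.
\begin{itemize}
\item For an edge $uv$ with value $\epsilon\in\{\pm1\}$, the switching produces the arc $uv$ with value $i\epsilon$. In matrix form, $A'_{uv}=i\epsilon$ and $A'_{vu}=-i\epsilon$.
\item For an arc $uv$ with value $\pm i$, the orientation is removed and the value becomes $i\cdot(\pm i)=\mp1$, an edge. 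In matrix form, $A'_{uv}=i\,A_{uv}$ and $A'_{vu}=\overline{i\,A_{uv}}=-i\,A_{vu}$.
\end{itemize}
So in every case the column of $v$ is multiplied by $i$ and the row of $v$ by $-i=\bar i$ (or the reverse, depending on convention), and entries not involving $v$ are untouched. This is precisely $A'=D^{*}AD$ with $d_v=i$. The case $a=-i$ is the composition of the $(-1)$- and $(i)$-switchings, hence conjugation by the product of the corresponding diagonal matrices.

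Invariance then follows by telescoping. Writing $A'_{xy}=\overline{d_x}\,A_{xy}\,d_y$, we get
\[
\prod_{j=1}^k A'_{v_jv_{j+1}}=\prod_{j=1}^k \overline{d_{v_j}}\,d_{v_{j+1}}\prod_{j=1}^k A_{v_jv_{j+1}} .
\]
Since $v_{k+1}=v_1$ and the $v_j$ are distinct, the scalar factor equals $\prod_j |d_{v_j}|^2=1$, because each $d_x$ is a unit complex number. If $v$ does not lie on $C$, the statement is trivial. The only step requiring care is the translation of the $(i)$-switching rule into matrix form, in particular the bookkeeping of arc reorientation and conjugation. I expect that to be the main (though modest) obstacle; everything else is formal.
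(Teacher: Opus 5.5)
Your proof is correct. The paper states this as an observation without proof, and your argument is the routine verification it leaves implicit: an $a$-switching at $v$ replaces the value read along $x\to y$ by $\overline{d_x}\,A_{xy}\,d_y$ (with $d_v=a$ and $d_x=1$ otherwise), so each passage of the cycle through $v$ contributes $\bar a a=1$. Your telescoping step also never uses that the $v_j$ are distinct, so the same argument gives switching invariance for signs of arbitrary closed walks, which is convenient because the paper later works with closed walks.
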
  
	
	Thus cycles in a complex-signed graph can be classified into four types as follows.
	
	A cycle is said to be an \emph{imaginary cycle} if its sign is either $i$ or $-i$. It is said to be \emph{real} if its sign is either $+1$ or $-1$. Similarly, a cycle is said to be \emph{positive} if its sign is one of $+1$ or $i$ and is said to be \emph{negative} if its sign is either $-1$ or $-i$. 
	
	Thus a cycle is imaginary if an odd number of its adjacencies are arcs. For imaginary cycles, the direction matters, meaning $\sigma(\myvec{C})=-\sigma(\mycev{C})$. In other words, an imaginary cycle is positive in one direction and negative in the other direction. A real cycle is either positive or negative regardless of the direction. 
	
	Two complex-signed graphs $(G, \sigma_1)$ and $(G,\sigma_2)$ are said to be \emph{equivalent} if one is obtained from the other by a series of switchings. 
	
	One of the basic facts about signed graphs is that the signs of cycles determine the equivalence class of signatures. The first case of this, namely, deciding if a signed graph is balanced, was proved by Harary \cite{H54}. The general case was proved by Zaslavsky in \cite{Z82}.
	
	\begin{theorem}
		Two signed graphs $(G,\sigma_1)$ and $(G, \sigma_2)$ are switching equivalent if and only if we have $\sigma_1(C)=\sigma_2(C)$ for every cycle $C$ of $G$. 
	\end{theorem}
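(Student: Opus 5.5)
One direction is immediate: switching at a vertex $v$ multiplies the signs of exactly two edges of any cycle through $v$ and none of any other cycle, so every cycle sign is preserved. Hence $\sigma_1$ and $\sigma_2$ switching equivalent implies $\sigma_1(C)=\sigma_2(C)$ for all cycles $C$. For the converse I will pass to the product signature $\tau=\sigma_1\sigma_2$, defined edgewise. Cycle signs are multiplicative, so $\tau(C)=\sigma_1(C)\sigma_2(C)=1$ for every cycle, i.e. $(G,\tau)$ is balanced. Switching $\sigma_1$ at a vertex set $X$ multiplies $\tau$ by the same cut. So it suffices to show that a balanced signature can be switched to the all-positive one (Harary's theorem); switching $\sigma_1$ at the same set $X$ then turns $\sigma_1$ into $\sigma_2$.

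To prove Harary's statement, I will work in each connected component separately and fix a spanning tree $T$ with a root $r$. For each vertex $v$, let $p(v)$ be the product of the $\tau$-signs along the unique $r$--$v$ path in $T$, and switch at $X=\{v: p(v)=-1\}$. After switching, an edge $uv$ carries sign $p(u)\tau(uv)p(v)$. For a tree edge this equals $1$, because $p(v)=p(u)\tau(uv)$.

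The main step is a non-tree edge $uv$. Here $p(u)\tau(uv)p(v)$ equals the sign of the closed walk formed by the $r$--$u$ tree path, the edge $uv$, and the $v$--$r$ tree path. The tree edges shared by the two paths appear twice and cancel. What remains is the fundamental cycle of $uv$ with respect to $T$, which is a genuine cycle, so its sign is $1$ by balance. Hence every edge becomes positive.

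The only delicate point, and the one I expect to need care, is this cancellation: the closed walk must be identified with an actual cycle so that the hypothesis, which is stated only for cycles, applies. It works because the two tree paths meet in a common initial segment from $r$ to the lowest common ancestor of $u$ and $v$, and the rest of the walk is a simple cycle.

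Loops and multiple edges, if allowed, are handled the same way by treating them as short cycles. Components are independent, and switching sets on different components can be combined.
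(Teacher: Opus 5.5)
Your argument is correct and is essentially the one the paper relies on. The paper cites Zaslavsky for this statement, and for its complex-signed analogue (\Cref{thm:SwitchingEquivalence}) it switches both signatures to be $+1$ on a spanning tree $T$ and compares each non-tree edge through its fundamental cycle $C_e$, which is what your potential $p(v)$ and common-initial-segment cancellation do, and passing first to the balanced product $\tau=\sigma_1\sigma_2$ only repackages that direct comparison.
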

	
	It follows from the proof of the theorem that in applying the theorem, we do not need to consider all cycles, but only a fundamental set of cycles. The corresponding statement for complex-signed graphs is a special case of the standard switching-equivalence criterion for gain graphs and is recorded here in the present terminology.
	
	\begin{theorem}\label{thm:SwitchingEquivalence}
		Two complex-signed graphs $(G,\sigma_1)$ and $(G, \sigma_2)$ are switching equivalent if and only if we have $\sigma_1(C)=\sigma_2(C)$ for every (directed) cycle $C$ of $G$. 
	\end{theorem}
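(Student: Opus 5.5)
Necessity follows from the Observation: each $a$-switching at a vertex preserves $\sigma(C)$ for every directed cycle $C$, so any finite sequence of switchings does too. Hence switching-equivalent complex-signed graphs give the same sign to every directed cycle.

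For sufficiency, I will translate everything into gains in $\mathbb Z_4$ and use a spanning forest. Write $\sigma(\vec{uv})\in\{\pm1,\pm i\}$ for the value read when traversing $u\to v$, so that $\sigma(\vec{vu})=\overline{\sigma(\vec{uv})}$. An $a$-switching at $v$ then acts by $\sigma(\vec{uv})\mapsto \sigma(\vec{uv})\,a$ and $\sigma(\vec{vu})\mapsto \bar a\,\sigma(\vec{vu})$ for every neighbour $u$. In particular, switching by a function $\theta:V\to\{\pm1,\pm i\}$ produces
\[
\sigma^\theta(\vec{uv})=\overline{\theta(u)}\,\sigma(\vec{uv})\,\theta(v).
\]
I first check this against the paper's four cases; the $(i)$-switching converts edges into arcs and arcs into edges exactly as multiplication by $i$ does. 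Thus the switching class of $\sigma$ is the set of all $\sigma^\theta$.

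Now assume $\sigma_1(C)=\sigma_2(C)$ for every directed cycle. Work in each component separately, and fix a spanning tree $T$ with root $r$. Let $P_v$ be the directed tree path from $r$ to $v$, and define
\[
\theta(v)=\sigma_1(P_v)^{-1}\,\sigma_2(P_v),
\]
where the sign of a path is the product of its traversal values. Switching $\sigma_1$ by $\theta$ makes it agree with $\sigma_2$ on every tree edge. This is the usual telescoping argument, which works because the group is abelian.

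For a non-tree edge $uv$, let $C$ be the fundamental cycle $P_u$, then $\vec{uv}$, then $P_v$ reversed. Its sign factors as the tree part times the value on $\vec{uv}$. The tree parts now agree under $\sigma_1^\theta$ and $\sigma_2$. Cycle signs are switching invariant, so $\sigma_1^\theta(C)=\sigma_1(C)=\sigma_2(C)$. Therefore $\sigma_1^\theta(\vec{uv})=\sigma_2(\vec{uv})$, and the two signings coincide.

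The only real obstacle is bookkeeping. I must confirm that the paper's combinatorial description of $(\pm i)$-switching matches the multiplicative rule above, including the orientation convention. I must also confirm that the cycle sign equals the product of traversal values, which is immediate from the definition. Everything else is the standard Harary–Zaslavsky fundamental-cycle argument.
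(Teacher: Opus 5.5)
Your proof is correct and follows the paper's argument: fix a spanning tree, switch so the signings agree on the tree edges, then compare the non-tree edges via fundamental cycles. The only difference is cosmetic: the paper normalizes both $\sigma_1$ and $\sigma_2$ to $+1$ on $T$, whereas you switch $\sigma_1$ directly onto $\sigma_2$ using the explicit potential $\theta$. One small precision: $P_u\,\vec{uv}\,\overleftarrow{P_v}$ is in general a closed walk rather than a cycle, because the common segment of the two tree paths from $r$ is traversed twice. That segment cancels since $\sigma(\vec{xy})\sigma(\vec{yx})=1$, but the hypothesis should be applied to the unique cycle of $T+uv$, as the paper does.
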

	
	\begin{proof}
		The forward implication follows from the invariance of cycle signs under switching.
		
		For the converse, assume that $G$ is connected; otherwise, we may work on connected components separately. Hence we may consider a spanning tree $T$ of $G$. Rooting $T$ and applying switchings from the root downward, one can switch any signature on the tree so that all edges are assigned $+1$. Let $ \sigma'_1$ be the signature on $G$ obtained from $ \sigma_1$ by switching in such a way that $\sigma_1'(e)=+1$ for all $e\in E(T)$, and let $\sigma'_2$ be the signature obtained from $\sigma_2$ in the same way. 
		We now claim that $\sigma'_1=\sigma'_2$. Let $e\in E(G)$. If $e\in E(T)$, then $\sigma'_1(e)=\sigma'_2(e)=1$. If $e\not\in E(T)$, then $T+e$ contains a unique (directed) cycle $C_{e}$. Since all other edges of $C_e$ have sign $+1$ in both $(G, \sigma'_1)$ and $(G, \sigma'_2)$ we have $\sigma'_1(C_e)=\sigma'_1(e)$ and $\sigma'_2(C_e)=\sigma'_2(e)$. However, we have $\sigma'_1(C_e)=\sigma_1(C_e)$ and $\sigma'_2(C_e)=\sigma_2(C_e)$ because switching preserves cycle signs, and we have $\sigma_1(C_e)=\sigma_2(C_e)$ by the assumption. Therefore, $\sigma'_1(e)=\sigma'_2(e)$.
	\end{proof}

		Observe that the proof uses only a fundamental set of cycles, namely the cycles $C_e$ for $e\in E(G)\setminus E(T)$. Suppose that a cycle $C$ is the symmetric difference of two cycles $C_1$ and $C_2$ from this fundamental set. Choose directions on $C_1$ and $C_2$ so that every edge common to them is traversed in opposite directions. The remaining edges then induce a direction on $C$, and for the corresponding (directed) cycles we have
$\sigma(\myvec{C})=\sigma(\myvec{C_1})\sigma(\myvec{C_2}).$
Indeed, on each common edge the two contributions are conjugates of one another and hence cancel in the product.

	\section{Cube-like graphs}
	
	A binary Cayley graph is a graph with vertex set $\mathbb{Z}_2^n$ and generating set $S\subseteq\mathbb{Z}_2^n$, where $xy$ is an edge if and only if $x+y\in S$. In the graph-theoretic parts of the paper we assume $0\notin S$, so that $Q_S$ is loopless; in the extremal set problem considered later, $0$ is allowed unless stated otherwise. When $n$ is clear from the context, the cube-like graph corresponding to $S\subset \mathbb{Z}_2^n$ is denoted by $Q_S$. The graphs are named cube-like by Lov\'asz \cite{L71} because when $S$ is a (standard) basis of  $\mathbb{Z}_2^n$, then $Q_S$ is the hypercube of dimension $n$ (or isomorphic to it). The $n$-dimensional cube is denoted by $H_n$.
	
	Following Huang's proof of the sensitivity conjecture, a question of interest is which cube-like graphs admit an edge-weighting such that the resulting adjacency matrix has only two eigenvalues. This has been studied, among others, in \cite{AZ20} where complex numbers are also considered as weights but under the condition that the adjacency matrix is Hermitian, so that the eigenvalues are real.
	
	Observe that in a cube-like graph, if $x$ and $y$ are connected by a 2-path, whose edges are labeled $s_1$ (incident to $x$) and $s_2$ (incident to $y$), then there is another path connecting them whose edges are labeled  $s_2$ (incident to $x$) and $s_1$ (incident to $y$). The middle vertex of the first is $x+s_1=y+s_2$ and that of the second is $x+s_2=y+s_1$. We give special attention to cube-like graphs in which there are at most two 2-paths between any two vertices. A subset $S\subseteq\mathbb{Z}_2^n$ is said to be a Sidon set if the sum of any two distinct elements of $S$ is distinct from the sum of any other two elements of $S$. Formally, 
	$a+b=c+d,$ implies $\{a,b\}=\{c,d\}$, where $a,b,c,d\in S$ and $a\neq b, c\neq d$.

	 A Sidon cube-like graph is a cube-like graph whose generating set is a Sidon set. The following proposition explains why these graphs are of particular interest.
	
	\begin{proposition}\label{prop:A2-4Cycles}
		A Sidon cube-like graph $G=Q_S$ with a signing $\sigma$ satisfies $A_{(G,\sigma)}^2=|S|I$ if and only if $\sigma(C)=-1$ for every $4$-cycle $C$ of $G$. 
	\end{proposition}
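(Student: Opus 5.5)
The plan is to compute the entries of $A^2$ directly, where $A=A_{(G,\sigma)}$ is Hermitian with $|A_{xy}|=1$ on adjacencies. Since $(A^2)_{xy}=\sum_z A_{xz}A_{zy}$, each diagonal entry equals $\sum_{z\sim x}|A_{xz}|^2=\deg(x)=|S|$, because $0\notin S$ makes $Q_S$ an $|S|$-regular loopless graph. It therefore suffices to show that the off-diagonal entries vanish for every $x\neq y$ if and only if every $4$-cycle has sign $-1$.

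First, for $x\neq y$ I will classify the common neighbours of $x$ and $y$. A common neighbour $z$ gives $x+z=s_1$ and $z+y=s_2$ with $s_1,s_2\in S$ and $s_1+s_2=x+y\neq 0$, so $s_1\neq s_2$. By the Sidon property, the unordered pair $\{s_1,s_2\}$ is determined by $x+y$. Hence there are at most two such $z$, namely $x+s_1$ and $x+s_2$, and these are distinct. So either $x,y$ have no common neighbour, in which case $(A^2)_{xy}=0$ automatically, or they have exactly two, $z_1,z_2$, and $xz_1yz_2$ is a $4$-cycle. Conversely, every $4$-cycle $v_1v_2v_3v_4$ arises this way from its diagonal pair $(v_1,v_3)$.

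Next, when there are exactly two common neighbours, I will write
\[
(A^2)_{xy}=A_{xz_1}A_{z_1y}+A_{xz_2}A_{z_2y}.
\]
Both terms are unimodular, so the sum vanishes iff $A_{xz_1}A_{z_1y}=-A_{xz_2}A_{z_2y}$. Multiplying by $\overline{A_{xz_2}A_{z_2y}}=A_{yz_2}A_{z_2x}$ (Hermitian property), this becomes
\[
A_{xz_1}A_{z_1y}A_{yz_2}A_{z_2x}=-1.
\]
The left-hand side is exactly $\sigma(C)$ for the directed cycle $C=xz_1yz_2$. Here I use the convention that traversing an arc against its orientation contributes the conjugate, which matches $A_{uv}=\overline{A_{vu}}$.

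The only point requiring care is this identification of the matrix-entry product with the cycle sign in the paper's arc convention. Once that is in place, the cycle sign is $-1$ in one direction iff it is $-1$ in the other, since reversing conjugates the sign and $-1$ is real. So the condition is direction-independent, and combining over all pairs $x\neq y$ gives the equivalence.
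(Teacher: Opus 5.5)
Your proposal is correct and follows essentially the same route as the paper: compute $A^2$, use the Sidon property to get exactly two $2$-paths between any two distinct vertices with a common neighbour, and identify the vanishing of the $(x,y)$-entry with the corresponding $4$-cycle having sign $-1$. The only differences are cosmetic. You obtain the identity by multiplying by the conjugate of one summand using the Hermitian property, whereas the paper switches at $x$ and $y$ to normalize one $2$-path to $+1$. You also spell out two points the paper leaves implicit: that every $4$-cycle arises from its diagonal pair, and that the condition does not depend on the direction of traversal.
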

	
	\begin{proof}
	Each $x-y-x$ walk of length $2$ contributes $1$ to the $(x,x)$-entry of $A_{(G,\sigma)}^2$, so the diagonal entries of $A_{(G,\sigma)}^2$ are all $|S|$. The only other walks of length 2 are between distinct vertices with a common neighbor. Since $S$ is a Sidon set, for any pair $x,y$ of distinct vertices with a common neighbor, there are exactly two $2$-paths $x-z-y$ and $x-t-y$. The $(x,y)$-entry of $A_{(G,\sigma)}^2$ then is $\sigma(xz)\sigma(zy)+\sigma(xt)\sigma(ty)$, which must be 0. By switchings at $x$ and $y$, if necessary, we may assume  $\sigma(xz)=\sigma(zy)=1$. Then for the entry to be $0$, we must have $\sigma(xt)\sigma(ty)=-1$. Hence the 4-cycle $xtyz$, in this cyclic direction, has sign $-1$. Finally, recall that switching does not change signs of cycles.
	\end{proof}

	Consequently, if $\sigma$ is a unitary signing of a Sidon cube-like graph $Q_S$, then its restriction to any subgraph $H'$ of $Q_S$ isomorphic to $H_j$, for some $j\leq n$, is a unitary signing of $H'$.

	\section{Orthogonal and unitary cube-like graphs}
	
	    Following the terminology of Alon and Zheng \cite{AZ20}, a complex-signed adjacency matrix $M$ of a $d$-regular graph $G$ is called a \emph{unitary signing} when any two distinct rows are orthogonal with respect to the usual Hermitian inner product. Equivalently,
	\[
	M\overline{M}^{\,T}=dI.
	\]
	Since $M$ is Hermitian, this is equivalent to $M^2=dI$; hence $M/\sqrt d$ is a unitary matrix. If all edge values are real, the signing is called \emph{orthogonal}, and $M/\sqrt d$ is then an orthogonal matrix. The underlying matrix condition is classical: real weighing matrices and their complex analogues were studied, for example, by Seberry Wallis \cite{SW72} and by Seberry and Whiteman \cite{SW80}, respectively. In the signed-graph setting, Ramezani studied signed adjacency matrices with two distinct eigenvalues \cite{R20}.

	Alon and Zheng \cite{AZ20} provided families of orthogonal and unitary cube-like graphs and proved the nonexistence of such signatures on some other cube-like graphs. The full classification of which cube-like graphs admit such a signature remains open. Here we first give a general sufficient condition: every generating set $S$ with the $\Theta$-property gives a cube-like graph $Q_S$ admitting a unitary signing. For Sidon sets we show that the condition is also necessary, and hence gives a full characterization. Besides this characterization, our approach has several additional advantages. For Sidon sets we characterize all such signings up to switching; in every affirmative case, we give a local formula for the signing, so that $\sigma(xy)$ can be computed 	directly from the edge $xy$; when a Sidon set $S$ gives a cube-like graph $Q_S$ with no unitary signing, we exhibit a subgraph on $O(n^2)$ vertices on which it is already impossible to assign signs so that every two-label $4$-cycle has sign $-1$; and, for Sidon sets, we determine precisely when a unitary signing can be chosen to be orthogonal.

	Orthogonal signings of the hypercube are often described globally (noting that they are all switching equivalent). A particular signature that is commonly considered is the one based on the inductive construction of $H_n$ as the Cartesian product $H_{n-1} \square K_2$. In this definition the edges corresponding to $K_2$ induce a matching, thus an acyclic subgraph. After a switching so that the sign of each of these edges is $+1$, one concludes that the signature on one copy of $H_{n-1}$ is the negation of the signature on the other copy. This gives an inductive definition of such a signature. A local description of this particular signature is as follows.
	
	Given a binary vector $x$, let $b_{j}(x)$ be the number of coordinates preceding the $j$th coordinate whose values are $1$. Let $xy$ be an edge of the hypercube $H_n$ and assume they differ in coordinate $j$. We define \[\sigma_n(xy)=(-1)^{b_j(x)}.\] Since $x$ and $y$ differ only in coordinate $j$, we have $b_j(x)=b_j(y)$ and hence $\sigma_n(xy)$ is defined independently of the order of $x$ and $y$. This signing is the same as the inductive signing described above. However, we directly prove the key property of this signature in the following proposition.
	
	\begin{proposition}\label{prop:4Cycles-1}
		For any 4-cycle $C$ of $(H_n, \sigma_n)$ we have $\sigma_n(C)=-1$.
	\end{proposition}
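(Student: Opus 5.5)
Every 4-cycle of $H_n$ uses exactly two coordinate directions. It is of the form $x,\ x+e_j,\ x+e_j+e_k,\ x+e_k$ for some vertex $x$ and two distinct coordinates $j<k$. This holds because $H_n=Q_S$ with $S$ the standard basis, which is a Sidon set. Any closed walk of length $4$ on distinct vertices must use each label an even number of times. Using a single label twice in a row would backtrack, so the cycle uses two distinct labels $e_j,e_k$, alternating. So I would first record this description of the 4-cycles and then compute the sign of such a cycle directly from the local formula $\sigma_n(xy)=(-1)^{b_j(x)}$. Since all values are real, the direction of traversal is irrelevant.

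The computation runs over the four edges in turn; the key fact is that changing coordinate $k>j$ does not affect $b_j$, while changing coordinate $j<k$ flips the parity of $b_k$.
\begin{itemize}
\item The edge $x\,(x+e_j)$ has sign $(-1)^{b_j(x)}$.
\item The edge $(x+e_k)\,(x+e_j+e_k)$ has sign $(-1)^{b_j(x+e_k)}=(-1)^{b_j(x)}$, since $k>j$.
\item The edge $x\,(x+e_k)$ has sign $(-1)^{b_k(x)}$.
\item The edge $(x+e_j)\,(x+e_j+e_k)$ has sign $(-1)^{b_k(x+e_j)}=(-1)^{b_k(x)+1}$, since coordinate $j$ precedes $k$ and toggling it changes the count of preceding ones by exactly one.
\end{itemize}
Multiplying the four signs, the two $j$-edges contribute $(-1)^{2b_j(x)}=1$ and the two $k$-edges contribute $(-1)^{2b_k(x)+1}=-1$. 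Hence $\sigma_n(C)=-1$.

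The only step requiring care is the claim that every 4-cycle of $H_n$ has this two-coordinate form. I would justify it by the parity argument above: the sum of the labels around a closed walk is $0$, so each label must occur an even number of times. With four edges and no immediate repetition, the labels must be $e_j,e_k,e_j,e_k$ for some $j\neq k$. After this, the result is a direct check and, via Proposition~\ref{prop:A2-4Cycles}, yields $A_{(H_n,\sigma_n)}^2=nI$.
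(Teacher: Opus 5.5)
Your proof is correct and is the same argument as the paper's. The paper likewise notes that opposite edges of a $4$-cycle change the same coordinate $k<l$, so the two $k$-edges carry equal signs and the two $l$-edges carry opposite signs, giving product $-1$; your parity argument that every $4$-cycle alternates two labels (which really rests on the linear independence of the standard basis) fills in a step the paper takes for granted.
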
  
	
	\begin{proof}
		Let $x_1x_2x_3x_4$ be a labeling of its vertices. Then $x_1$ and $x_2$ differ in the same coordinate in which $x_3$ and $x_4$ differ. Let $k$ be this coordinate. Similarly, the difference between $x_1$ and $x_4$ is in the same coordinate as the difference of $x_2$ and $x_3$, say coordinate $l$. Without loss of generality, assume $k<l$. Then it follows from the formula for $\sigma_n$ that $\sigma_n(x_1x_2)=\sigma_n(x_3x_4)$ and that $\sigma_n(x_1x_4)=-\sigma_n(x_2x_3)$. Thus the product of the four values is $-1$. 
	\end{proof}

	For most of the paper, we consider connected cube-like graphs. Equivalently, the generating set $S$ generates the group $\mathbb{Z}_2^n$ on which $G$ is built. Choosing a basis contained in $S$, we see that $Q_S$ contains an isomorphic copy of $H_n$, the hypercube of dimension $n$. After applying an automorphism of $\mathbb{Z}_2^n$, we may assume that $Q_S$ contains a standard copy of $H_n$ as a subgraph, equivalently we assume that $S$ contains the standard basis: $\{e_1,e_2, \dots, e_n\}\subseteq S$. 
	
	For the rest of this section, unless stated otherwise, we assume that $Q_S$ is a cube-like graph with a unitary signing $\sigma$ whose restriction to a spanning copy of $H_n$ is also unitary. We record three points. First, if $S$ is Sidon, every spanning copy of $H_n$ obtained from a basis contained in $S$ has this property. Second, a unitary signing of a general cube-like graph need not restrict to a unitary signing on any spanning copy of $H_n$; see \Cref{fig:PC(3)}. Third, after applying suitable switchings, we may assume that the restriction of $\sigma$ to $E(H_n)$ is the signing $\sigma_n$ defined by $\sigma_n(xy)=(-1)^{b_j(x)}$. Indeed, both signings assign $-1$ to every $4$-cycle of $H_n$. Since the sign of any (directed) cycle of $H_n$ is the product of the signs of the (directed) $4$-cycles that generate it, we have $\sigma(C)=\sigma_n(C)$ for every cycle $C$ of $H_n$. Hence, by \Cref{thm:SwitchingEquivalence}, $\sigma_n$ is switching equivalent to $\sigma|_{_{E(H_n)}}$.

	\begin{lemma}\label{lem:edge-type}
		Assume that $S$ is Sidon, that $\sigma$ is a unitary signing of the binary Cayley graph $Q_S$, that $S$ contains the standard basis, and that the restriction of $\sigma$ to the subgraph $H_n$ is $\sigma_n$. Then, for each $s\in S$, either all edges corresponding to $s$ are assigned real values ($1$ or $-1$), or they are all assigned imaginary values ($i$ or $-i$).
	\end{lemma}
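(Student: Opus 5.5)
Fix $s\in S$. If $s$ is a standard basis vector $e_j$, every $s$-edge lies in $H_n$ and is real, since the restriction of $\sigma$ to $H_n$ is $\sigma_n$. So assume $s\notin\{e_1,\dots,e_n\}$. The plan is to show that real versus imaginary is preserved when we move an $s$-edge along an edge of $H_n$. Since $H_n$ is connected and spans $\mathbb Z_2^n$, this propagates the type to all $s$-edges.

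Concretely, take an $s$-edge $xy$ with $y=x+s$ and a basis vector $e_j$ with $e_j\neq s$. Consider the closed walk
\[
C_j(x)\colon\; x,\; x+s,\; x+s+e_j,\; x+e_j,\; x .
\]
Since $s\neq e_j$ and both are nonzero, these four vertices are distinct. Hence $C_j(x)$ is a genuine $4$-cycle of $Q_S$, using labels $s,e_j,s,e_j$. By \Cref{prop:A2-4Cycles}, because $S$ is Sidon and $\sigma$ is unitary, we have $\sigma(C_j(x))=-1$, which is real. The two $e_j$-edges $\{x+s,x+s+e_j\}$ and $\{x+e_j,x\}$ are edges of $H_n$, so their values are $\pm1$. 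Conjugating the values of the $s$-edges for orientation does not change whether they are real or imaginary. It follows that $\sigma(\{x,x+s\})\,\overline{\sigma(\{x+e_j,x+e_j+s\})}$ (or the product without conjugation, depending on orientation) is real. Therefore the $s$-edge at $x$ and the $s$-edge at $x+e_j$ are either both real or both imaginary.

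Next, iterate over $j$. Any vertex $z$ is reached from $x$ by adding basis vectors $e_j$ one at a time. Each step moves the $s$-edge $\{w,w+s\}$ to $\{w+e_j,w+e_j+s\}$, and by the previous paragraph the type is preserved at each step. Hence every $s$-edge $\{z,z+s\}$ has the same type as $\{x,x+s\}$.

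The only point needing care is that each $C_j(x)$ really is a $4$-cycle, so that \Cref{prop:A2-4Cycles} applies. This needs $s\neq e_j$ and $s\neq 0$, and the case $s=e_j$ was already handled separately. The Sidon hypothesis enters only through \Cref{prop:A2-4Cycles}, which is what forces $\sigma(C_j(x))=-1$ on these two-label $4$-cycles. In fact, all we need from it is that the sign of $C_j(x)$ is real, which would also follow directly from the vanishing of the relevant entry of $A^2$.
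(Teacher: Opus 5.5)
Your proof is correct and follows essentially the same route as the paper. In both, each two-label $4$-cycle with labels $s,e_j,s,e_j$ has real sign ($-1$ by \Cref{prop:A2-4Cycles}), so the real/imaginary type of an $s$-edge is carried across the real $e_j$-edges of $H_n$ and then propagated along a path in $H_n$; the paper uses a pair of parallel paths, while you add basis vectors one at a time and also check explicitly that the four vertices are distinct using $s\neq0$ and $s\neq e_j$.
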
 
	
	\begin{proof}
		Consider a 4-cycle $x_1x_2x_3x_4$. 
		If \(x_1x_2\) and \(x_3x_4\) are both real, then either \(x_1x_4\) and \(x_2x_3\) are both real, or they are both imaginary.
		To complete the proof, note that the assumption $\left.\sigma\right|_{_{E(H_n)}}=\sigma_n$ implies that on the edges of $H_n$ there are only real values. For $s\in S\setminus\{e_1,e_2,\dots,e_n\}$, suppose that $xy$ and $zt$ are two edges corresponding to $s$. Consider an $(x-z)$-path $x = x_1 - x_2 - \cdots - x_k = z$ in $H_n$ and the parallel $(y-t)$-path $y = y_1 - y_2 - \cdots - y_k = t$. Here parallel means $x_j+x_{j+1}=y_j+y_{j+1}$. Thus all edges $x_jy_j$ correspond to $s$. The claim follows by applying the preceding observation successively to these $4$-cycles.	   
	\end{proof}
	
	\begin{definition}
		An \emph{antipodal $2n$-cycle} in $H_n$ is a cycle whose edges, in cyclic order, are labeled
		\[
		e_{\pi(1)},e_{\pi(2)},\ldots,e_{\pi(n)},e_{\pi(1)},e_{\pi(2)},\ldots,e_{\pi(n)}
		\]
		for some permutation $\pi$. Equivalently, each pair of antipodal vertices of the cycle is also an antipodal pair in $H_n$. (See \Cref{fig:Antipodal2nCycle} for a depiction.)
	\end{definition} 
	
	\begin{figure}[htbp]
		\centering
		\begin{tikzpicture}[
			scale=0.9, 
			every node/.style={font=\fontfamily{cmr}\selectfont\footnotesize}, 
			vertex/.style={circle, draw, fill=white, inner sep=1.8pt, minimum size=4.5pt},
			edge label/.style={color=purple, font=\fontfamily{cmr}\selectfont\tiny},
			vlabel/.style={color=black, font=\fontfamily{cmr}\selectfont\tiny, inner sep=5pt}
			]
			
			\def\a{2.5}
			\def\b{2.8}
			
			\node[vertex] (v0) at ({ \a*cos(90) }, { \b*sin(90) }) {};
			\node[vertex] (v1) at ({ \a*cos(54) }, { \b*sin(54) }) {};
			\node[vertex] (v2) at ({ \a*cos(18) }, { \b*sin(18) }) {};
			\node[vertex] (v3) at ({ \a*cos(-18) }, { \b*sin(-18) }) {};
			\node[vertex] (v4) at ({ \a*cos(-54) }, { \b*sin(-54) }) {};
			\node[vertex] (v5) at ({ \a*cos(-90) }, { \b*sin(-90) }) {};
			\node[vertex] (v6) at ({ \a*cos(-126) }, { \b*sin(-126) }) {};
			\node[vertex] (v7) at ({ \a*cos(-162) }, { \b*sin(-162) }) {};
			\node[vertex] (v8) at ({ \a*cos(162) }, { \b*sin(162) }) {};
			\node[vertex] (v9) at ({ \a*cos(126) }, { \b*sin(126) }) {};
			
			\node[vlabel, rotate=90, anchor=west] at (v0) {$00\cdots0$};
			\node[vlabel, rotate=54, anchor=west] at (v1) {$10\cdots0$};
			\node[vlabel, rotate=18, anchor=west] at (v2) {$110\cdots0$};
			\node[vlabel, rotate=-18, anchor=west] at (v3) {$11\cdots100$};
			\node[vlabel, rotate=-54, anchor=west] at (v4) {$11\cdots10$};
			\node[vlabel, rotate=90, anchor=east] at (v5) {$11\cdots1$};
			\node[vlabel, rotate=54, anchor=east] at (v6) {$01\cdots1$};
			\node[vlabel, rotate=18, anchor=east] at (v7) {$001\cdots1$};
			\node[vlabel, rotate=-18, anchor=east] at (v8) {$00\cdots011$};
			\node[vlabel, rotate=-54, anchor=east] at (v9) {$00\cdots01$};
			
			\draw (v0) to [bend left=16] node[midway, sloped, above, edge label] {$e_1$} (v1);
			\draw (v1) to [bend left=16] node[midway, sloped, above, edge label] {$e_2$} (v2);
			
			\draw (v3) to [bend left=16] node[midway, sloped, below, edge label] {$e_{n-1}$} (v4);
			
			\draw (v4) to [bend left=16] node[midway, sloped, below, edge label] {$e_n$} (v5);
			
			\draw (v5) to [bend left=16] node[midway, sloped, below, edge label] {$e_1$} (v6);
			\draw (v6) to [bend left=16] node[midway, sloped, below, edge label] {$e_2$} (v7);
			\draw (v8) to [bend left=16] node[midway, sloped, above, edge label] {$e_{n-1}$} (v9);
			\draw (v9) to [bend left=16] node[midway, sloped, above, edge label] {$e_n$} (v0);
			
			\draw[black, loosely dotted, line width=1.2pt] (v2) to [bend left=16] (v3);
			\draw[black, loosely dotted, line width=1.2pt] (v7) to [bend left=16] (v8);
			
		\end{tikzpicture}
		\caption{An antipodal $2n$-cycle in $H_n$.}
		\label{fig:Antipodal2nCycle}
	\end{figure}

		A key point in our proofs is the sign of an antipodal $2n$-cycle, which is determined by the following lemma.
	
	\begin{lemma}\label{lem:AntipodalCycle}
		Any antipodal $2n$-cycle $C$ of $H_n$ is the binary sum of $\tbinom{n}{2}$ $4$-cycles. 
	\end{lemma}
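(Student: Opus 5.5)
We argue by induction on $n$, using the symmetry of $H_n$ to reduce to the standard cycle of \Cref{fig:Antipodal2nCycle}. Applying the coordinate permutation $\pi^{-1}$ and a translation (both automorphisms of $H_n$ that map $4$-cycles to $4$-cycles), we may assume $\pi$ is the identity and the cycle starts at $0$. Then $C=C_n$ has edge labels $e_1,\dots,e_n,e_1,\dots,e_n$. Its vertices are $0,\ e_1,\ e_1+e_2,\dots,\ \mathbf 1,\ \mathbf 1+e_1,\dots,\ e_n$.

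For $n=2$ the cycle $C_2$ is itself a $4$-cycle, which agrees with $\binom22=1$. For the inductive step we reroute $C_n$ around the coordinate $e_n$. Consider the path $P$ of $C_n$ with labels $e_n,e_1,\dots,e_{n-1}$, running from $u=e_1+\cdots+e_{n-1}$ to $v=e_n$. We replace it by the path $P'$ with labels $e_1,\dots,e_{n-1},e_n$, which has the same endpoints.

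The idea is to slide the $e_n$-edge past the edges $e_1,\dots,e_{n-1}$ one at a time. Each transposition of two consecutive labels $e_n,e_j$ into $e_j,e_n$ corresponds to adding (mod $2$) one $4$-cycle with labels $e_j,e_n,e_j,e_n$. Doing this $n-1$ times turns $P$ into $P'$, so $C_n$ equals the binary sum of these $n-1$ squares and the cycle $C'$ obtained by replacing $P$ with $P'$.

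We must check that the intermediate walks stay paths and that the symmetric differences behave. Because each square shares exactly the two edges being swapped, this holds, and if needed it suffices to argue at the level of edge sets in the cycle space.

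The new cycle $C'$ has labels $e_1,\dots,e_{n-1},e_n,e_1,\dots,e_{n-1},e_n$ in a different arrangement, so we instead perform the rerouting symmetrically. Contract the $e_n$-direction: $C'$ consists of the antipodal $2(n-1)$-cycle of $H_{n-1}$ on the layer $x_n=0$, except that it jumps once to the layer $x_n=1$ and back.

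To make this precise we choose the reroute differently. Write $C_n$ as the two $e_n$-edges joined by two paths. One path lies in layer $x_n=0$ and the other in layer $x_n=1$, each with labels $e_1,\dots,e_{n-1}$. Slide one $e_n$-edge across the $n-1$ edges of the layer-$1$ path using $n-1$ squares. This moves that path down to layer $0$, and the result is the antipodal $2(n-1)$-cycle of the layer $x_n=0$, a copy of $H_{n-1}$.

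By induction, that cycle is the sum of $\binom{n-1}{2}$ squares. The total is therefore $\binom{n-1}{2}+(n-1)=\binom n2$.

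The main obstacle is exactly this bookkeeping: we need the squares to cancel edge sets correctly, with no unintended cancellation among squares, so that the sum is an actual count of $\binom n2$ distinct $4$-cycles. To handle it, we first verify that the $n-1$ squares are the "ladder" between the parallel paths in layers $1$ and $0$. These squares are pairwise distinct and adjacent ones share one $e_n$-edge, so their sum is the boundary of the ladder. Adding the ladder to $C_n$ removes the layer-$1$ path and both $e_n$-edges and inserts the parallel layer-$0$ path, which yields $C_{n-1}$.
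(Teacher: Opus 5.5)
Your final ladder argument is correct and is essentially the paper's proof. The paper also views the antipodal cycle inside $C'\square K_2$ and adds the ladder of squares between the two parallel half-paths to reduce to the antipodal cycle $C'$ of the smaller cube, giving $\binom{n}{2}+n=\binom{n+1}{2}$. Your first rerouting attempt is superfluous and its ``stays a path'' claim is inaccurate, since that rerouted walk picks up the backtrack $0,e_n,0$, which vanishes mod $2$ and leaves exactly $C_{n-1}$; you can drop it and keep only the ladder paragraph.
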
 
	
	\begin{proof}
		We apply induction on $n$. The statement is clear for $H_2$ since the graph itself is both a 4-cycle and an antipodal $4$-cycle. Assuming the statement holds for $H_n$, in the construction of $H_{n+1}$ as the Cartesian product $H_n \square K_2$ an antipodal $(2n+2)$-cycle $C$ lies inside a $C'\square K_2$ where all the edges corresponding to $K_2$ are labeled $e_{n+1}$ and $C'$ is an antipodal $2n$-cycle of $H_n$ (see \Cref{subfig:UnfilledLabels}). Thus $C$ can be written as the binary sum of $C'$ with $n$ 4-cycles as shown in \Cref{subfig:PatternedFills}. 
	\end{proof}
	
	\begin{remark} To present $C$ together with all the $4$-cycles that generate it (those used in the proof above) we need $\binom{n+1}{2}+1$ vertices of $H_n$. 
	\end{remark}
	
	\begin{corollary}\label{coro:SignOfAntipodalCycle}
		Let $C$ be an antipodal $2n$-cycle of $H_n$. Then,
		\begin{itemize}
			\item $\sigma_n(C) = -1$ if $n \equiv 2$ or $3 \pmod 4$,
			\item $\sigma_n(C) = +1$ if $n \equiv 0$ or $1 \pmod 4$.
		\end{itemize}
	\end{corollary}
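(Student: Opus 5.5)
The corollary follows by combining \Cref{lem:AntipodalCycle} with \Cref{prop:4Cycles-1} and the multiplicativity of cycle signs under binary sums, as recorded after \Cref{thm:SwitchingEquivalence}.

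First, by \Cref{lem:AntipodalCycle}, an antipodal $2n$-cycle $C$ of $H_n$ is the binary sum of $\binom{n}{2}$ $4$-cycles $C_1,\dots,C_m$, where $m=\binom n2$. Since $\sigma_n$ is real-valued, direction plays no role; every edge value is $\pm1$, so $\sigma_n(e)^2=1$. It follows that for any edge multiset, edges appearing an even number of times contribute trivially. Hence $\sigma_n(C)=\prod_{j}\sigma_n(C_j)$. Indeed, each edge of $C$ appears an odd number of times among the $C_j$, and every other edge appears an even number of times.

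Second, by \Cref{prop:4Cycles-1}, each $\sigma_n(C_j)=-1$. Therefore $\sigma_n(C)=(-1)^{\binom n2}$.

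Finally, it remains to compute the parity of $\binom n2=n(n-1)/2$ modulo $4$:
\begin{itemize}
\item for $n\equiv0$, $\binom n2=n(n-1)/2$ is even, since $4\mid n$;
\item for $n\equiv1$, it is even, since $4\mid n-1$;
\item for $n\equiv2$, it is odd, since $n/2$ and $n-1$ are both odd;
\item for $n\equiv3$, it is odd, since $(n-1)/2$ and $n$ are both odd.
\end{itemize}
This gives the stated dichotomy.

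The only point needing care is the first step. We must make sure the binary sum decomposition is genuinely an edge-set identity, so that the product formula applies. This holds because the sum of the $4$-cycles equals $C$ as an edge set modulo $2$, and real signs are insensitive to multiplicity parity.
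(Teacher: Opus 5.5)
Your proof is correct and matches the argument the paper intends: \Cref{lem:AntipodalCycle} together with \Cref{prop:4Cycles-1} gives $\sigma_n(C)=(-1)^{\binom{n}{2}}$, and your case analysis of the parity of $\binom{n}{2}$ by $n \bmod 4$ is right. Your justification for passing from the binary sum to the product of signs is also the right one: since $\sigma_n$ is real, edge multiplicities matter only modulo $2$ and no orientation bookkeeping is needed.
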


	\begin{figure}[H]
		\centering
		
		\begin{subfigure}[b]{0.49\textwidth}
			\centering
			\begin{tikzpicture}[
				scale=0.9, 
				every node/.style={font=\fontfamily{cmr}\selectfont\footnotesize}, 
				vertex/.style={circle, draw, fill=white, inner sep=1.8pt, minimum size=4.5pt},
				edge label/.style={color=purple, inner sep=1.5pt, font=\fontfamily{cmr}\selectfont\tiny}, 
				bg edge/.style={draw=black!40, thin},
				bg dotted/.style={draw=black!40, loosely dotted, line width=1pt},
				bg product/.style={draw=black!50, dashed, thick}, 
				highlight/.style={draw=black, line width=1.6pt},              
				highlight dotted/.style={draw=black, loosely dotted, line width=1.6pt},
				highlight product/.style={draw=black, dashed, line width=1.6pt}
				]
				
				\def\a{2.5}
				\def\b{2.8}
				\def\dx{1.5}
				\def\dy{1.2}
				
				\node[vertex] (v0b) at ({ \a*cos(90) + \dx }, { \b*sin(90) + \dy }) {};
				\node[vertex] (v1b) at ({ \a*cos(54) + \dx }, { \b*sin(54) + \dy }) {};
				\node[vertex] (v2b) at ({ \a*cos(18) + \dx }, { \b*sin(18) + \dy }) {};
				\node[vertex] (v3b) at ({ \a*cos(-18) + \dx }, { \b*sin(-18) + \dy }) {};
				\node[vertex] (v4b) at ({ \a*cos(-54) + \dx }, { \b*sin(-54) + \dy }) {};
				\node[vertex] (v5b) at ({ \a*cos(-90) + \dx }, { \b*sin(-90) + \dy }) {};
				\node[vertex] (v6b) at ({ \a*cos(-126) + \dx }, { \b*sin(-126) + \dy }) {};
				\node[vertex] (v7b) at ({ \a*cos(-162) + \dx }, { \b*sin(-162) + \dy }) {};
				\node[vertex] (v8b) at ({ \a*cos(162) + \dx }, { \b*sin(162) + \dy }) {};
				\node[vertex] (v9b) at ({ \a*cos(126) + \dx }, { \b*sin(126) + \dy }) {};
				
				\node[vertex] (v0f) at ({ \a*cos(90) }, { \b*sin(90) }) {};
				\node[vertex] (v1f) at ({ \a*cos(54) }, { \b*sin(54) }) {};
				\node[vertex] (v2f) at ({ \a*cos(18) }, { \b*sin(18) }) {};
				\node[vertex] (v3f) at ({ \a*cos(-18) }, { \b*sin(-18) }) {};
				\node[vertex] (v4f) at ({ \a*cos(-54) }, { \b*sin(-54) }) {};
				\node[vertex] (v5f) at ({ \a*cos(-90) }, { \b*sin(-90) }) {};
				\node[vertex] (v6f) at ({ \a*cos(-126) }, { \b*sin(-126) }) {};
				\node[vertex] (v7f) at ({ \a*cos(-162) }, { \b*sin(-162) }) {};
				\node[vertex] (v8f) at ({ \a*cos(162) }, { \b*sin(162) }) {};
				\node[vertex] (v9f) at ({ \a*cos(126) }, { \b*sin(126) }) {};
				
				\draw[bg edge] (v0f) to [bend left=16] node[midway, sloped, above, edge label] {$e_1$} (v1f);
				\draw[bg edge] (v1f) to [bend left=16] node[midway, sloped, above, edge label] {$e_2$} (v2f);
				\draw[bg dotted] (v2f) to [bend left=16] (v3f);
				\draw[bg edge] (v3f) to [bend left=16] (v4f);
				\draw[bg edge] (v4f) to [bend left=16] node[midway, sloped, below, edge label] {$e_n$} (v5f);
				
				\draw[highlight] (v5f) to [bend left=16] node[midway, sloped, below, edge label] {$e_1$} (v6f);
				\draw[highlight] (v6f) to [bend left=16] node[midway, sloped, below, edge label] {$e_2$} (v7f);
				\draw[highlight dotted] (v7f) to [bend left=16] (v8f);
				\draw[highlight] (v8f) to [bend left=16] node[midway, sloped, above, edge label] {$e_{n-1}$} (v9f);
				\draw[highlight] (v9f) to [bend left=16] (v0f);
				
				\draw[highlight] (v0b) to [bend left=16] (v1b);
				\draw[highlight] (v1b) to [bend left=16] (v2b);
				\draw[highlight dotted] (v2b) to [bend left=16] (v3b);
				\draw[highlight] (v3b) to [bend left=16] (v4b);
				\draw[highlight] (v4b) to [bend left=16] (v5b);
				
				\draw[bg edge] (v5b) to [bend left=16] (v6b);
				\draw[bg edge] (v6b) to [bend left=16] (v7b);
				\draw[bg dotted] (v7b) to [bend left=16] (v8b);
				\draw[bg edge] (v8b) to [bend left=16] (v9b);
				\draw[bg edge] (v9b) to [bend left=16] (v0b);
				
				\draw[highlight product] (v0f) -- (v0b); 
				
				\draw[highlight product] (v5f) -- (v5b) node[pos=0.35, sloped, above, edge label] {$e_{n+1}$};
				
				\draw[bg product] (v1f) -- (v1b);
				\draw[bg product] (v2f) -- (v2b);
				\draw[bg product] (v3f) -- (v3b);
				\draw[bg product] (v4f) -- (v4b);
				
				\draw[bg product] (v6f) -- (v6b) node[midway, sloped, above, edge label] {$e_{n+1}$};
				\draw[bg product] (v7f) -- (v7b) node[midway, sloped, above, edge label] {$e_{n+1}$};
				
				\draw[bg product] (v8f) -- (v8b);
				\draw[bg product] (v9f) -- (v9b);
			\end{tikzpicture}
			\caption{$C_{2n}\square K_2$ inside $H_{n+1}$;}
			\label{subfig:UnfilledLabels}
		\end{subfigure}
		\begin{subfigure}[b]{0.49\textwidth}
			\centering
			\begin{tikzpicture}[
				scale=0.9, 
				every node/.style={font=\fontfamily{cmr}\selectfont\footnotesize}, 
				vertex/.style={circle, draw, fill=white, inner sep=1.8pt, minimum size=4.5pt},
				edge label/.style={color=blue, inner sep=2pt},
				bg edge/.style={draw=gray!50, thin},
				bg dotted/.style={draw=gray!50, loosely dotted, line width=1pt},
				bg product/.style={draw=gray!70, dashed, thick}, 
				highlight/.style={draw=red, line width=1.6pt},
				highlight dotted/.style={draw=red, loosely dotted, line width=1.6pt},
				highlight product/.style={draw=red, dashed, line width=1.6pt}
				]
				
				\def\a{2.5}
				\def\b{2.8}
				\def\dx{1.5}
				\def\dy{1.2}
				
				\node[vertex] (v0b) at ({ \a*cos(90) + \dx }, { \b*sin(90) + \dy }) {};
				\node[vertex] (v1b) at ({ \a*cos(54) + \dx }, { \b*sin(54) + \dy }) {};
				\node[vertex] (v2b) at ({ \a*cos(18) + \dx }, { \b*sin(18) + \dy }) {};
				\node[vertex] (v3b) at ({ \a*cos(-18) + \dx }, { \b*sin(-18) + \dy }) {};
				\node[vertex] (v4b) at ({ \a*cos(-54) + \dx }, { \b*sin(-54) + \dy }) {};
				\node[vertex] (v5b) at ({ \a*cos(-90) + \dx }, { \b*sin(-90) + \dy }) {};
				\node[vertex] (v6b) at ({ \a*cos(-126) + \dx }, { \b*sin(-126) + \dy }) {};
				\node[vertex] (v7b) at ({ \a*cos(-162) + \dx }, { \b*sin(-162) + \dy }) {};
				\node[vertex] (v8b) at ({ \a*cos(162) + \dx }, { \b*sin(162) + \dy }) {};
				\node[vertex] (v9b) at ({ \a*cos(126) + \dx }, { \b*sin(126) + \dy }) {};
				
				\node[vertex] (v0f) at ({ \a*cos(90) }, { \b*sin(90) }) {};
				\node[vertex] (v1f) at ({ \a*cos(54) }, { \b*sin(54) }) {};
				\node[vertex] (v2f) at ({ \a*cos(18) }, { \b*sin(18) }) {};
				\node[vertex] (v3f) at ({ \a*cos(-18) }, { \b*sin(-18) }) {};
				\node[vertex] (v4f) at ({ \a*cos(-54) }, { \b*sin(-54) }) {};
				\node[vertex] (v5f) at ({ \a*cos(-90) }, { \b*sin(-90) }) {};
				\node[vertex] (v6f) at ({ \a*cos(-126) }, { \b*sin(-126) }) {};
				\node[vertex] (v7f) at ({ \a*cos(-162) }, { \b*sin(-162) }) {};
				\node[vertex] (v8f) at ({ \a*cos(162) }, { \b*sin(162) }) {};
				\node[vertex] (v9f) at ({ \a*cos(126) }, { \b*sin(126) }) {};
				
				\begin{pgfonlayer}{background}
					\fill[pattern={Lines[angle=45, distance=1.6pt, line width=0.35pt]}, pattern color=red!25] 
					({ \a*cos(90) }, { \b*sin(90) }) to [bend left=16] ({ \a*cos(54) }, { \b*sin(54) }) -- 
					({ \a*cos(54) + \dx }, { \b*sin(54) + \dy }) to [bend right=16] ({ \a*cos(90) + \dx }, { \b*sin(90) + \dy }) -- cycle;
					
					\fill[pattern={Lines[angle=-45, distance=1.6pt, line width=0.35pt]}, pattern color=blue!25] 
					({ \a*cos(54) }, { \b*sin(54) }) to [bend left=16] ({ \a*cos(18) }, { \b*sin(18) }) -- 
					({ \a*cos(18) + \dx }, { \b*sin(18) + \dy }) to [bend right=16] ({ \a*cos(54) + \dx }, { \b*sin(54) + \dy }) -- cycle;
					
					\fill[pattern={Lines[angle=90, distance=1.6pt, line width=0.35pt]}, pattern color=orange!45] 
					({ \a*cos(-18) }, { \b*sin(-18) }) to [bend left=16] ({ \a*cos(-54) }, { \b*sin(-54) }) -- 
					({ \a*cos(-54) + \dx }, { \b*sin(-54) + \dy }) to [bend right=16] ({ \a*cos(-18) + \dx }, { \b*sin(-18) + \dy }) -- cycle;
					
					\fill[pattern={Lines[angle=0, distance=1.6pt, line width=0.35pt]}, pattern color=green!35] 
					({ \a*cos(-54) }, { \b*sin(-54) }) to [bend left=16] ({ \a*cos(-90) }, { \b*sin(-90) }) -- 
					({ \a*cos(-90) + \dx }, { \b*sin(-90) + \dy }) to [bend right=16] ({ \a*cos(-54) + \dx }, { \b*sin(-54) + \dy }) -- cycle;
					
					\draw[teal, thin, opacity=0.4] (0,0) ellipse (2.35cm and 2.64cm);
				\end{pgfonlayer}
				
				\draw[bg edge] (v0f) to [bend left=16] (v1f);
				\draw[bg edge] (v1f) to [bend left=16] (v2f);
				\draw[bg dotted] (v2f) to [bend left=16] (v3f);
				\draw[bg edge] (v3f) to [bend left=16] (v4f);
				\draw[bg edge] (v4f) to [bend left=16] (v5f);
				
				\draw[highlight] (v5f) to [bend left=16] (v6f);
				\draw[highlight] (v6f) to [bend left=16] (v7f);
				\draw[highlight dotted] (v7f) to [bend left=16] (v8f);
				\draw[highlight] (v8f) to [bend left=16] (v9f);
				\draw[highlight] (v9f) to [bend left=16] (v0f);
				
				\draw[highlight] (v0b) to [bend left=16] (v1b);
				\draw[highlight] (v1b) to [bend left=16] (v2b);
				\draw[highlight dotted] (v2b) to [bend left=16] (v3b);
				\draw[highlight] (v3b) to [bend left=16] (v4b);
				\draw[highlight] (v4b) to [bend left=16] (v5b);
				
				\draw[bg edge] (v5b) to [bend left=16] (v6b);
				\draw[bg edge] (v6b) to [bend left=16] (v7b);
				\draw[bg dotted] (v7b) to [bend left=16] (v8b);
				\draw[bg edge] (v8b) to [bend left=16] (v9b);
				\draw[bg edge] (v9b) to [bend left=16] (v0b);
				
				\draw[highlight product] (v0f) -- (v0b);
				\draw[highlight product] (v5f) -- (v5b);
				
				\draw[bg product] (v1f) -- (v1b);
				\draw[bg product] (v2f) -- (v2b);
				\draw[bg product] (v3f) -- (v3b);
				\draw[bg product] (v4f) -- (v4b);
				\draw[bg product] (v6f) -- (v6b);
				\draw[bg product] (v7f) -- (v7b);
				\draw[bg product] (v8f) -- (v8b);
				\draw[bg product] (v9f) -- (v9b);
			\end{tikzpicture}
			\caption{Forming {\color{red} $(2n+2)$-cycle} from {\color{teal} $2n$-cycle}.}
			\label{subfig:PatternedFills}
		\end{subfigure}
	\end{figure}

	Next we consider a specific cube-like graph, namely the projective cube of dimension $n$, denoted $PC(n)$. This is the cube-like graph with $S=\{e_1,e_2,\dots,e_n,J\}$, where $J=11\cdots1$. Equivalently, $PC(n)$ is the antipodal projection of $H_{n+1}$; it is also known as a folded cube. For more on this family, we refer to \cite{CNS26} and references therein. We assume $\varsigma_n$ is a complex signing of $PC(n)$ whose restriction to $H_n$ is $\sigma_n$. Under this assumption, we show that the values of $\varsigma_n$ on the edges corresponding to $J$ are almost uniquely determined.

	\begin{theorem}\label{thm:Unitary-PC(k)}
		For $n\geq4$, the projective cube $PC(n)$ has exactly two switching-equivalence classes of unitary signings, represented by $\varsigma_n$ and $\varsigma_n^J$, whose restrictions to $H_n$ are both $\sigma_n$. Moreover, on the edges corresponding to $J$, the signing $\varsigma_n^J$ is the negation of $\varsigma_n$; that is,
		\(
		\varsigma_n(x\bar{x})=-\varsigma_n^J(x\bar{x}).
		\)
		Furthermore,
		\begin{itemize}
			\item $\varsigma_n(x\bar{x}) \in \{+1,-1 \}$ if $n \equiv 0$ or $3 \pmod 4$;
			\item $\varsigma_n(x\bar{x}) \in \{+i,-i\}$ if $n \equiv 1$ or $2 \pmod 4$.
		\end{itemize}
	\end{theorem}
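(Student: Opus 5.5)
Since $PC(n)$ is Sidon for $n\geq4$, I will work through \Cref{prop:A2-4Cycles}: a signing of $PC(n)$ is unitary exactly when every $4$-cycle has sign $-1$. After switching, the restriction to $H_n$ may be taken to be $\sigma_n$, as explained before \Cref{lem:edge-type}. The only remaining freedom is the values on the $J$-edges $x\bar x$. For $n\ge 4$, $S$ is Sidon: a relation $a+b=c+d$ among distinct elements of $S$ would force $J$ to be a sum of at most three basis vectors. Hence every $4$-cycle of $PC(n)$ uses exactly two labels, and these cycles fall into two kinds. The first kind uses $e_k,e_l$ only; these lie in $H_n$ and already have sign $-1$. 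The second kind uses $J$ and $e_k$, say $x,\,x+e_k,\,\bar x+e_k,\,\bar x$. For these the requirement reads
\[
\varsigma(x\bar x)\,\overline{\varsigma((x+e_k)(\bar x+e_k))}\;=\;-\,\sigma_n(x(x+e_k))\,\sigma_n(\bar x(\bar x+e_k)),
\]
with suitable conjugation depending on the direction of traversal.

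By \Cref{lem:edge-type}, the $J$-edges are all real or all imaginary. I first handle existence and uniqueness. Consider the \emph{$J$-ladder}, obtained by the rule above: once the value on a single $J$-edge $x_0\bar x_0$ is chosen, the condition on the $(J,e_k)$ $4$-cycles determines the value on the $J$-edge at every neighbour. Since $H_n$ is connected, this propagates to all $J$-edges. Thus there are at most four choices, $\varsigma(x_0\bar x_0)\in\{\pm1,\pm i\}$.

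The propagation is consistent if and only if, for every cycle $D$ in $H_n$ (with its antipodal copy $\bar D$), the induced constraint closes up. It suffices to check this on a generating set of cycles. There are two kinds of cycle in the quotient to consider.
\begin{enumerate}
\item For a $4$-cycle $D$ of $H_n$, compatibility amounts to the statement that the product of $\sigma_n(D)$ and $\sigma_n(\bar D)$ equals $(-1)^4$ times itself. This holds automatically, because both $4$-cycles have sign $-1$.
\item The essential constraint comes from a path $P$ in $H_n$ from $x$ to $\bar x$, since $x$ and $\bar x$ are identified through the $J$-edge. Take $P$ to be half of an antipodal $2n$-cycle $C$, running from $x$ to $\bar x$, and let $P'$ be the other half, running from $\bar x$ back to $x$, so that $P'$ is $P$ translated by $J$. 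Walking the ladder along $P$ from the edge $x\bar x$ returns to the same edge, traversed reversed as $\bar x x$.
\end{enumerate}

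Making the second constraint precise gives a relation of the form
\[
\overline{\varsigma(x\bar x)}\;=\;(-1)^n\,\sigma_n(C)\,\varsigma(x\bar x),
\]
up to a carefully tracked sign. Here $(-1)^n$ comes from the $n$ ladder squares. \Cref{coro:SignOfAntipodalCycle} gives $\sigma_n(C)=(-1)^{\binom n2}$. The value $\varsigma(x\bar x)$ is then real exactly when $(-1)^{n+\binom n2}=1$, which happens iff $n\equiv0,3\pmod4$, and imaginary otherwise. This matches the statement. In either case exactly two of the four candidate values survive, and they are negatives of each other. This gives $\varsigma_n$ and $\varsigma_n^J$.

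It remains to show that the two surviving signings are not switching equivalent. The cycle formed by $P$ together with the $J$-edge is a cycle of $PC(n)$ whose sign changes under negating all $J$-edges, since it contains exactly one $J$-edge. By \Cref{thm:SwitchingEquivalence} the two signings are therefore inequivalent. Conversely, the cycle space of $PC(n)$ is generated by the $4$-cycles together with this single cycle, so these are the only classes.

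The main obstacle is the bookkeeping of conjugations and orientations around the ladder. In particular, one must justify that traversing the $J$-edge in the reverse direction yields the conjugate, which is what produces the real/imaginary dichotomy, and one must pin down the exact sign exponent $n+\binom n2$. I would verify it on $n=4$ and $n=5$ directly, as a check, before writing the general argument.
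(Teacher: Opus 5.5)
Your proposal is correct. For the real/imaginary dichotomy and for uniqueness it is the paper's Möbius-ladder argument in compact form. Write $v_x$ for the value on the $J$-edge oriented from $x$ to $\bar x$, so $v_{\bar x}=\overline{v_x}$. Each ladder square gives $v_{x_{j+1}}=-\sigma_n(x_jx_{j+1})\sigma_n(\bar x_j\bar x_{j+1})\,v_{x_j}$. Multiplying these over the half of $C$ gives exactly $\overline{v_{x_0}}=(-1)^n\sigma_n(C)\,v_{x_0}$, with no further sign to track. Hence $v_{x_0}^2=(-1)^{\binom{n+1}{2}}$.

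Where you genuinely differ is existence. The paper writes down $\varsigma_n(x\bar x)=(-1)^{O(x)}$ or $i(-1)^{O(x)}$ and checks every $(e_j,J)$ square in four parity cases. You instead obtain existence abstractly, from consistency of the propagation. That step rests on the claim that it suffices to check a generating set of cycles of the quotient $H_n/\langle J\rangle$, namely images of $4$-cycles plus one half-antipodal path. The claim is true, but it is not the plain abelian cycle-space statement, because returning along an $x$-to-$\bar x$ path introduces a conjugation; as written it is asserted, not proved.

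You can close this in one line and recover the paper's formula at the same time. Since $b_k(\bar x)=k-1-b_k(x)$, the transport factor $-\sigma_n(x,x+e_k)\sigma_n(\bar x,\bar x+e_k)$ equals $(-1)^k$, independently of $x$. Consistency around closed walks of $H_n$ is then immediate, because each label occurs an even number of times. The propagation also integrates to $v_x=(-1)^{O(x)}v_0$.

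As for trade-offs, your route gives a one-identity derivation of the mod-$4$ dichotomy in place of a case analysis. It also gives an explicit non-equivalence argument (the $(n+1)$-cycle through a single $J$-edge), which the paper supplies only afterwards via $\sigma^s$. The paper's route gives the local formula that it later generalises in \Cref{thm:ThetaSufficient}.
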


	\begin{proof}
		For each edge $xy$ with $x+y=e_j$ we take $\varsigma_n(xy)=\sigma_n(xy)=(-1)^{b_j(x)}$. To define $\varsigma_n(x\bar{x})$, we take $O(x)$ to be the number of $1$'s in the odd coordinates of $x$. Formally, viewing $x$ as a vector in $\mathbb{R}^n$ rather than $\mathbb{Z}_2^n$ and using the inner product of the vectors with real values we have:
		
		\[O(x)= o \cdot x^\mathsf{T} \text{ where } o=[1010\cdots ] \in \mathbb{R}^{n}. \]
				
		Observe that when $n \equiv 3$ or $0 \pmod 4$, since the number of odd coordinates is even, we have  $O(x) \equiv O(\bar{x}) \pmod 2$. For  $n \equiv 1$ or $2 \pmod 4$, we have $O(x) \equiv O(\bar{x})+1 \pmod 2$.
		
		In the case of $n \equiv 3$ or $0 \pmod 4$, we define $\varsigma_n(x\bar{x}) =(-1)^{O(x)}$. This is independent of which end of the edge we choose.  In the case of $n \equiv 1$ or $2 \pmod 4$, the symmetry does not hold and we must use directed edges for a function based on $O(x)$. Thus we define  $\varsigma_n(x\bar{x}) =(-1)^{O(x)}i$.
		
		It remains to verify that $\varsigma_n(C)=-1$ for every 4-cycle. This is already established for 4-cycles in $H_n$. Hence we consider a 4-cycle $C=xy\bar{y}\bar{x}$ where $x+y=\bar{y}+\bar{x}=e_j$. 
		
		Observe that $b_j(\bar{x})=j-1-b_j(x)$. Based on the parity of $j$ and the value of $n \pmod 4$ there are four cases:
		
		\begin{itemize}
			\item $j$ even, $n \equiv 3$ or $0 \pmod 4$: $\varsigma_n(xy)\varsigma_n(\bar{x}\bar{y})=-1$ and $\varsigma_n(x\bar{x})=\varsigma_n(y\bar{y})\in \{+1,-1\}$.
			
			\item $j$ even, $n \equiv 1$ or $2 \pmod 4$: $\varsigma_n(xy)\varsigma_n(\bar{x}\bar{y})=-1$ and $\varsigma_n(x\bar{x})=-\varsigma_n(\bar{y}y)\in \{i,-i\}$.
			
			\item $j$ odd, $n \equiv 3$ or $0 \pmod 4$: $\varsigma_n(xy)\varsigma_n(\bar{x}\bar{y})=1$ and $\varsigma_n(x\bar{x})=-\varsigma_n(y\bar{y})\in \{+1,-1\}$.
			
			\item $j$ odd, $n \equiv 1$ or $2 \pmod 4$: $\varsigma_n(xy)\varsigma_n(\bar{x}\bar{y})=1$ and $\varsigma_n(x\bar{x})=\varsigma_n(\bar{y}y)\in \{i,-i\}$. 
		\end{itemize}

		For the final assertions, consider an antipodal $2n$-cycle $C$ in the subgraph $H_n$ of $PC(n)$. By \Cref{coro:SignOfAntipodalCycle},
		\[
			\sigma_n(C)=+1 \quad\text{if } n\equiv0,1\pmod4,
		\]
		and
		\[
			\sigma_n(C)=-1 \quad\text{if } n\equiv2,3\pmod4.
		\]

		\begin{figure}[H]
			\centering
			
			\begin{subfigure}[b]{0.48\textwidth}
				\centering
				\begin{tikzpicture}[
					baseline={(current bounding box.center)},
					scale=0.9, 
					every node/.style={font=\fontfamily{cmr}\selectfont\footnotesize}, 
					vertex/.style={circle, draw, fill=white, inner sep=1.8pt, minimum size=5.0pt},
					edge label/.style={color=purple, font=\fontfamily{cmr}\selectfont\tiny}
					]
					
					\def\a{2.5}
					\def\b{2.8}
					
					\node[vertex] (v0) at ({ \a*cos(90) }, { \b*sin(90) }) {};
					\node[vertex] (v1) at ({ \a*cos(54) }, { \b*sin(54) }) {};
					\node[vertex] (v2) at ({ \a*cos(18) }, { \b*sin(18) }) {};
					\node[vertex] (v3) at ({ \a*cos(-18) }, { \b*sin(-18) }) {};
					\node[vertex] (v4) at ({ \a*cos(-54) }, { \b*sin(-54) }) {};
					\node[vertex] (v5) at ({ \a*cos(-90) }, { \b*sin(-90) }) {};
					\node[vertex] (v6) at ({ \a*cos(-126) }, { \b*sin(-126) }) {};
					\node[vertex] (v7) at ({ \a*cos(-162) }, { \b*sin(-162) }) {};
					\node[vertex] (v8) at ({ \a*cos(162) }, { \b*sin(162) }) {};
					\node[vertex] (v9) at ({ \a*cos(126) }, { \b*sin(126) }) {};
					
					\begin{pgfonlayer}{background}
						\draw[draw=violet, dash pattern=on 3pt off 1.5pt, line width=0.8pt] (v0) -- (v5); 
						\draw[draw=violet, dash pattern=on 3pt off 1.5pt, line width=0.8pt] (v1) -- (v6); 
						\draw[draw=violet, dash pattern=on 3pt off 1.5pt, line width=0.8pt] (v2) -- (v7); 
						\draw[draw=violet, dash pattern=on 3pt off 1.5pt, line width=0.8pt] (v3) -- (v8); 
						\draw[draw=violet, dash pattern=on 3pt off 1.5pt, line width=0.8pt] (v4) -- (v9); 
					\end{pgfonlayer}
					
					\draw[blue, thick] (v0) to [bend left=16] node[midway, sloped, above, edge label] {$e_1$} (v1);
					\draw[blue, thick] (v1) to [bend left=16] node[midway, sloped, above, edge label] {$e_2$} (v2);
					\draw[blue, thick] (v8) to [bend left=16] node[midway, sloped, above, edge label] {$e_{n-1}$} (v9);
					\draw[blue, thick] (v9) to [bend left=16] node[midway, sloped, above, edge label] {$e_n$} (v0);
					
					\draw[blue, thick] (v5) to [bend left=16] node[midway, sloped, below, edge label] {$e_1$} (v6);
					\draw[blue, thick] (v6) to [bend left=16] node[midway, sloped, below, edge label] {$e_2$} (v7);
					\draw[blue, thick] (v3) to [bend left=16] node[midway, sloped, below, edge label] {$e_{n-1}$} (v4);
					\draw[blue, thick] (v4) to [bend left=16] node[midway, sloped, below, edge label] {$e_n$} (v5);
					
					\draw[blue, loosely dotted, line width=1.3pt] (v2) to [bend left=16] (v3);
					\draw[blue, loosely dotted, line width=1.3pt] (v7) to [bend left=16] (v8);
				\end{tikzpicture}
				\caption{Antipodal cycle in $PC(n)$;}
				\label{subfig:CoordinateLayout}
			\end{subfigure}
			\hfill
			\begin{subfigure}[b]{0.48\textwidth}
				\centering
				\begin{tikzpicture}[
					baseline={(current bounding box.center)}, 
					scale=0.9, 
					every node/.style={font=\fontfamily{cmr}\selectfont\footnotesize}, 
					vertex/.style={circle, draw, fill=white, inner sep=1.8pt, minimum size=5.0pt},
					edge label/.style={color=purple, font=\fontfamily{cmr}\selectfont\tiny}, 
					rung edge/.style={draw=violet, dash pattern=on 3pt off 1.5pt, line width=0.8pt}
					]
					
					\def\h{1.0}    
					\def\w{0.85}    
					\def\gap{1.5}  
					
					\path (0, -3.05) -- (0, 2.55);
					
					\node[vertex] (t1) at ({-3.5*\w - 0.5*\gap},  \h) {};
					\node[vertex] (t2) at ({-2.5*\w - 0.5*\gap},  \h) {};
					\node[vertex] (t3) at ({-1.5*\w - 0.5*\gap},  \h) {};
					\node[vertex] (t4) at ({-0.5*\w - 0.5*\gap},  \h) {};
					\node[vertex] (t5) at ({ 0.5*\w + 0.5*\gap},  \h) {};
					\node[vertex] (t6) at ({ 1.5*\w + 0.5*\gap},  \h) {};
					\node[vertex] (t7) at ({ 2.5*\w + 0.5*\gap},  \h) {};
					\node[vertex] (t8) at ({ 3.5*\w + 0.5*\gap},  \h) {};
					
					\node[vertex] (b1) at ({-3.5*\w - 0.5*\gap}, -\h) {};
					\node[vertex] (b2) at ({-2.5*\w - 0.5*\gap}, -\h) {};
					\node[vertex] (b3) at ({-1.5*\w - 0.5*\gap}, -\h) {};
					\node[vertex] (b4) at ({-0.5*\w - 0.5*\gap}, -\h) {};
					\node[vertex] (b5) at ({ 0.5*\w + 0.5*\gap}, -\h) {};
					\node[vertex] (b6) at ({ 1.5*\w + 0.5*\gap}, -\h) {};
					\node[vertex] (b7) at ({ 2.5*\w + 0.5*\gap}, -\h) {};
					\node[vertex] (b8) at ({ 3.5*\w + 0.5*\gap}, -\h) {};
					
					\begin{pgfonlayer}{background}
						\draw[blue, thick] (t1) -- node[pos=0.40, sloped, above, edge label] {$e_n$} (b8);
						\draw[blue, thick] (b1) -- node[pos=0.60, sloped, above, edge label] {$e_n$} (t8);
					\end{pgfonlayer}
					
					\draw[rung edge] (t1) -- (b1);
					\draw[rung edge] (t2) -- (b2);
					\draw[rung edge] (t3) -- (b3);
					\draw[rung edge] (t4) -- (b4);
					\draw[rung edge] (t5) -- (b5);
					\draw[rung edge] (t6) -- (b6);
					\draw[rung edge] (t7) -- (b7);
					\draw[rung edge] (t8) -- (b8);
					
					\draw[blue, thick] (t1) -- node[midway, above, edge label] {$e_1$} (t2);
					\draw[blue, thick] (t2) -- node[midway, above, edge label] {$e_2$} (t3);
					\draw[blue, thick] (t3) -- (t4);
					\draw[blue, loosely dotted, line width=1.3pt] (t4) -- (t5);
					\draw[blue, thick] (t5) -- (t6);
					\draw[blue, thick] (t6) -- node[midway, above, edge label] {$e_{n-2}$} (t7);
					\draw[blue, thick] (t7) -- node[midway, above, edge label] {$e_{n-1}$} (t8);
					
					\draw[blue, thick] (b1) -- node[midway, below, edge label] {$e_1$} (b2);
					\draw[blue, thick] (b2) -- node[midway, below, edge label] {$e_2$} (b3);
					\draw[blue, thick] (b3) -- (b4);
					\draw[blue, loosely dotted, line width=1.3pt] (b4) -- (b5);
					\draw[blue, thick] (b5) -- (b6);
					\draw[blue, thick] (b6) -- node[midway, below, edge label] {$e_{n-2}$} (b7);
					\draw[blue, thick] (b7) -- node[midway, below, edge label] {$e_{n-1}$} (b8);
					
				\end{tikzpicture}
				\caption{Möbius ladder presentation.}
				\label{subfig:MobiusLadderProfile}
			\end{subfigure}
			
		\end{figure}

		Observe that the subgraph of $PC(n)$ induced by $C$ is the Möbius ladder with $n-1$ steps; see \Cref{subfig:MobiusLadderProfile} and \Cref{subfig:CoordinateLayout} for two presentations of this subgraph. Suppose first that $n\equiv0$ or $1\pmod4$. By applying $(-1)$-switches, if necessary, we may assume that all edges of $C$ are positive. Such switchings do not interchange real and imaginary values, so all edges corresponding to $J$ remain of the same type. If these values are real, then the signs on consecutive $J$-edges must alternate in order that each $4$-cycle have sign $-1$. The final $4$-cycle, corresponding to $e_n$ and $J$, also requires the first and last $J$-edges to have opposite signs. Hence $n-1$ must be odd, and therefore $n\equiv0\pmod4$.

		If the values on the $J$-edges are imaginary, then, after reversing orientations if necessary, they may all be regarded as having value $i$. For the $4$-cycles corresponding to $e_j$ and $J$, $j\leq n-1$, the orientations of consecutive $J$-edges must alternate, whereas the final $4$-cycle, corresponding to $e_n$ and $J$, requires the first and last $J$-edges to have the same orientation. Hence $n-1$ must be even, and therefore $n\equiv1\pmod4$.

		Suppose now that $n\equiv2$ or $3\pmod4$. Since $\sigma_n(C)=-1$, we may switch so that all edges of $C$ are positive except one of the two edges corresponding to $e_n$. The same argument applies, except that the condition imposed by the final $4$-cycle is reversed. Thus real values on the $J$-edges require $n-1$ to be even, giving $n\equiv3\pmod4$, while imaginary values require $n-1$ to be odd, giving $n\equiv2\pmod4$.

		In each case, once the value on one $J$-edge is chosen, the $4$-cycle conditions determine the values on all the remaining $J$-edges; the two choices are negatives of one another.\end{proof}

    	The assumption $n\geq4$ ensures that $PC(n)$ is a Sidon cube-like graph. Hence, the restriction of any unitary signing of $PC(n)$ to $H_n$ is a unitary signing of $H_n$. Therefore, the two switching-equivalence classes represented above are the only ones.

    For $PC(2)$, one checks directly that there are also only two switching-equivalence classes. One of the two signings is presented in \Cref{fig:K4-unitary-signing}. In the displayed representative, every $4$-cycle contains an imaginary edge.        	
	
	\begin{figure}[htbp]
		\centering
		
		\begin{minipage}[c]{0.48\textwidth}
			\centering
			\scriptsize
			
			\begin{tikzpicture}[
				scale=0.78,
				vertex/.style={
					circle,
					draw=black,
					fill=white,
					thick,
					minimum size=6mm,
					inner sep=0.5pt,
					font=\scriptsize
				},
				pedge/.style={
					draw=blue,
					line width=0.9pt
				},
				nedge/.style={
					draw=red,
					line width=0.9pt
				},
				iedge/.style={
					draw=purple,
					line width=0.9pt,
					-{Stealth[length=2mm,width=1.5mm]}
				},
				niedge/.style={
					draw=green!60!black,
					line width=0.9pt,
					-{Stealth[length=2mm,width=1.5mm]}
				}
				]
				
				\node[vertex] (v01) at (3,4) {\scriptsize $01$};
				\node[vertex] (v10) at (0,2) {\scriptsize $10$};
				\node[vertex] (v11) at (6,2) {\scriptsize $11$};
				\node[vertex] (v00) at (3,0) {\scriptsize $00$};
				
				\draw[pedge] (v00) -- (v01);
				\draw[pedge] (v00) -- (v10);
				\draw[nedge] (v00) -- (v11);
				
				\draw[iedge]  (v01) -- (v10);
				\draw[iedge]  (v01) -- (v11);
				\draw[niedge] (v10) -- (v11);
				
				\node[
				text=blue,
				fill=white,
				inner sep=1pt,
				font=\scriptsize
				] at (3.35,1.15) {$\scriptsize +1$};
				
				\node[
				text=blue,
				fill=white,
				inner sep=1pt,
				font=\scriptsize
				] at (1.15,0.65) {$\scriptsize +1$};
				
				\node[
				text=red,
				fill=white,
				inner sep=1pt,
				font=\scriptsize
				] at (4.85,0.65) {$\scriptsize -1$};
				
				\node[
				text=purple,
				fill=white,
				inner sep=1pt,
				font=\scriptsize
				] at (1.15,3.25) {$\scriptsize i$};
				
				\node[
				text=purple,
				fill=white,
				inner sep=1pt,
				font=\scriptsize
				] at (4.85,3.25) {$\scriptsize i$};
				
				\node[
				text=green!60!black,
				fill=white,
				inner sep=1pt,
				font=\scriptsize
				] at (2.1,2.22) {$\scriptsize -i$};
				
			\end{tikzpicture}
			
			\vspace{2mm}
			
			\begin{tikzpicture}[
				font=\scriptsize,
				pedge/.style={
					draw=blue,
					line width=0.9pt
				},
				nedge/.style={
					draw=red,
					line width=0.9pt
				},
				iedge/.style={
					draw=purple,
					line width=0.9pt,
					-{Stealth[length=2mm,width=1.5mm]}
				},
				niedge/.style={
					draw=green!60!black,
					line width=0.9pt,
					-{Stealth[length=2mm,width=1.5mm]}
				}
				]
				
				\draw[pedge] (0,1.05) -- (0.65,1.05);
				\node[right] at (0.8,1.05) {$+1$};
				
				\draw[nedge] (2.1,1.05) -- (2.75,1.05);
				\node[right] at (2.9,1.05) {$-1$};
				
				\draw[iedge] (0,0.35) -- (0.65,0.35);
				\node[right] at (0.8,0.35) {$i$};
				
				\draw[niedge] (2.1,0.35) -- (2.75,0.35);
				\node[right] at (2.9,0.35) {$-i$};
				
			\end{tikzpicture}
			
		\end{minipage}
		\hfill
		\begin{minipage}[c]{0.47\textwidth}
			\centering
			\small
			
			\[
			A:
			\begin{array}{c|rrrr}
				& 00 & 01 & 10 & 11 \\ \hline
				00 & 0  & 1  & 1  & -1 \\
				01 & 1  & 0  & i  & i  \\
				10 & 1  & -i & 0  & -i \\
				11 & -1 & -i & i  & 0
			\end{array}.
			\]
			
			\vspace{2mm}
			\hspace{1cm}
			
			\[
			A^2=3I_4.
			\]
			
		\end{minipage}
		
		\caption{A unitary signing of $K_4$.}
		\label{fig:K4-unitary-signing}
	\end{figure}

	\FloatBarrier
	
After a $(-i)$-switch at the vertex $11$, as depicted in \Cref{fig:K4-i-switch}, we get the $H_2$-subgraph $00-01-11-10$ with orthogonal signing and the other two edges are assigned imaginary values.

	\begin{figure}[H]
		\centering
		
		\begin{minipage}[c]{0.48\textwidth}
			\centering
			\scriptsize
			
			\begin{tikzpicture}[
				scale=0.78,
				vertex/.style={
					circle,
					draw=black,
					fill=white,
					thick,
					minimum size=6mm,
					inner sep=0.5pt,
					font=\scriptsize
				},
				pedge/.style={
					draw=blue,
					line width=0.9pt
				},
				nedge/.style={
					draw=red,
					line width=0.9pt
				},
				iedge/.style={
					draw=purple,
					line width=0.9pt,
					-{Stealth[length=2mm,width=1.5mm]}
				},
				niedge/.style={
					draw=green!60!black,
					line width=0.9pt,
					-{Stealth[length=2mm,width=1.5mm]}
				}
				]
				
				\node[vertex] (v01) at (3,4) {$01$};
				\node[vertex] (v10) at (0,2) {$10$};
				\node[vertex] (v11) at (6,2) {$11$};
				\node[vertex] (v00) at (3,0) {$00$};

				\draw[pedge] (v00) -- (v01);
				\draw[pedge] (v00) -- (v10);
				\draw[iedge] (v01) -- (v10);

				\draw[niedge] (v11) -- (v00);
				\draw[pedge]  (v01) -- (v11);
				\draw[nedge]  (v10) -- (v11);
				
		
				\node[
				text=blue,
				fill=white,
				inner sep=1pt,
				font=\scriptsize
				] at (3.35,1.15) {$+1$};
				
				\node[
				text=blue,
				fill=white,
				inner sep=1pt,
				font=\scriptsize
				] at (1.15,0.65) {$+1$};
				
				\node[
				text=purple,
				fill=white,
				inner sep=1pt,
				font=\scriptsize
				] at (1.15,3.25) {$i$};
				
				\node[
				text=blue,
				fill=white,
				inner sep=1pt,
				font=\scriptsize
				] at (4.85,3.25) {$+1$};
				
				\node[
				text=red,
				fill=white,
				inner sep=1pt,
				font=\scriptsize
				] at (2.1,2.22) {$-1$};
				
				\node[
				text=green!60!black,
				fill=white,
				inner sep=1pt,
				font=\scriptsize
				] at (4.85,0.65) {$-i$};
				
			\end{tikzpicture}

		\end{minipage}
		\hfill
		%
		\begin{minipage}[c]{0.47\textwidth}
			\centering
			\small
			
			\[
			A'=
			\begin{array}{c|rrrr}
				& 00 & 01 & 10 & 11 \\ \hline
				00 & 0  & 1  & 1  & i  \\
				01 & 1  & 0  & i  & 1  \\
				10 & 1  & -i & 0  & -1 \\
				11 & -i & 1  & -1 & 0
			\end{array}.
			\]

			\[
			(A')^2=3I_4.
			\]
			
		\end{minipage}
		
		\caption{The unitary signing of $K_4$ obtained by applying a
			$(-i)$-switch at the vertex $11$.}
		\label{fig:K4-i-switch}
	\end{figure}
	
	For $PC(3)$, the situation is different. A direct enumeration gives $24$ switching-equivalence classes of unitary signings. Two of these classes restrict to a unitary signing on every spanning copy of $H_3$ obtained from a basis contained in $S$, whereas the remaining $22$ restrict to no such copy. Six of the $24$ classes contain orthogonal representatives. Figure~\ref{fig:PC(3)} shows a unitary signing whose restriction to any copy of $H_3$, whether basis-generated or not, is not unitary.
	
	\begin{figure}[ht]
		\centering
		\begin{minipage}[c]{0.5\textwidth}
			\centering
			\begin{tikzpicture}[
				scale=0.82,
				vertex/.style={circle,draw=black,fill=white,thick,
					minimum size=2mm,inner sep=1pt},
				pedge/.style={draw=blue,line width=1pt},
				nedge/.style={draw=red,line width=1pt},
				iarc/.style={draw=purple,line width=1pt,->,>=stealth}
				]
				\node[vertex] (v000) at (0,3.5)   {\tiny $000$};
				\node[vertex] (v011) at (2.2,3.5) {\tiny $011$};
				\node[vertex] (v101) at (4.4,3.5) {\tiny $101$};
				\node[vertex] (v110) at (6.6,3.5) {\tiny $110$};
				\node[vertex] (v001) at (0,0)   {\tiny $001$};
				\node[vertex] (v010) at (2.2,0) {\tiny $010$};
				\node[vertex] (v100) at (4.4,0) {\tiny $100$};
				\node[vertex] (v111) at (6.6,0) {\tiny $111$};
				
				\draw[pedge] (v000) -- (v001);
				\draw[pedge] (v000) -- (v010);
				\draw[pedge] (v000) -- (v100);
				\draw[pedge] (v000) -- (v111);
				\draw[pedge] (v011) -- (v001);
				\draw[nedge] (v011) -- (v100);
				\draw[pedge] (v101) -- (v001);
				\draw[nedge] (v101) -- (v010);
				\draw[pedge] (v101) -- (v100);
				\draw[nedge] (v101) -- (v111);
				\draw[pedge] (v110) -- (v001);
				\draw[nedge] (v110) -- (v100);
				\draw[iarc] (v011) -- (v010);
				\draw[iarc] (v111) -- (v011);
				\draw[iarc] (v010) -- (v110);
				\draw[iarc] (v110) -- (v111);
				
				\draw[pedge] (0.2,-0.8) -- (0.9,-0.8);
				\node[right] at (0.95,-0.8) {\small $+1$};
				\draw[nedge] (2.5,-0.8) -- (3.2,-0.8);
				\node[right] at (3.25,-0.8) {\small $-1$};
				\draw[iarc] (4.8,-0.8) -- (5.5,-0.8);
				\node[right] at (5.55,-0.8) {\small $i$};
			\end{tikzpicture}
		\end{minipage}
		\hfill
		\begin{minipage}[c]{0.43\textwidth}
			\centering
			\footnotesize
			\[
			A=\left(
			\begin{array}{rrrr|rrrr}
				0&0&0&0 & 1& 1& 1& 1\\
				0&0&0&0 & 1& i&-1&-i\\
				0&0&0&0 & 1&-1& 1&-1\\
				0&0&0&0 & 1&-i&-1& i\\ \hline
				1& 1& 1& 1 & 0&0&0&0\\
				1&-i&-1& i & 0&0&0&0\\
				1&-1& 1&-1 & 0&0&0&0\\
				1& i&-1&-i & 0&0&0&0
			\end{array}\right)
			\]
			\normalsize
			\[ A^2=4I_8. \]
		\end{minipage}
		\caption{A unitary signing of $PC(3)$ whose restriction to no
			spanning copy of $H_3$ is unitary.}
		\label{fig:PC(3)}
	\end{figure}

	Let $S$ be Sidon and suppose that $Q_S$ admits a unitary signing $\sigma$. After switching, assume that the restriction of $\sigma$ to $H_n$ is $\sigma_n$. For $s\in S\setminus\{e_1,\ldots,e_n\}$, define $\sigma^{s}$ to be a signing by
	
	\[ \sigma^{s}(xy)=
	\begin{cases}
		\sigma(xy) & \text{ if } x+y\neq s,\\
		-\sigma(xy) & \text{ if } x+y=s.
	\end{cases}
	\]
	
	Since $S$ is Sidon, every $4$-cycle containing an edge corresponding to $s$ contains exactly one other edge corresponding to $s$. Thus the sign of every $4$-cycle remains $-1$ in $\sigma^s$. For $s\in S\setminus\{e_1,\ldots,e_n\}$, consider the dependency relation
	\[
	s=e_{j_1}+\cdots+e_{j_\ell}.
	\]
	Each cycle $C$ in $Q_S$ corresponding to this relation satisfies $\sigma^s(C)=-\sigma(C)$. Hence, $\sigma$ and $\sigma^s$ are not switching equivalent.
	
	By the preceding projective-cube analysis, once the restriction to $H_n$ is fixed as $\sigma_n$, the values on the edges corresponding to each $s\in S\setminus\{e_1,\ldots,e_n\}$ are determined up to simultaneous negation. These choices may be made independently, since negating all edges corresponding to any $s$ preserves the sign of every $4$-cycle and hence, by \Cref{prop:A2-4Cycles}, the resulting signing is again unitary. Therefore, there are exactly $2^{|S|-n}$ such choices, and the preceding cycles show that they belong to distinct switching-equivalence classes.

	\begin{corollary}\label{coro:NumberOfSignatres}
		If $S$ is a Sidon subset of $\mathbb{Z}_2^n$ generating it, and $Q_S=(\mathbb{Z}_2^n,S)$ admits a unitary signing, then it has precisely $2^{|S|-n}$ switching-equivalence classes of unitary signings.
	\end{corollary}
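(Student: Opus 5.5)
We prove the count in two halves: a lower bound of $2^{|S|-n}$ inequivalent classes and a matching upper bound. The discussion preceding the statement already supplies most of the ingredients, so the plan is to organise them.

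\emph{Normalisation.} Fix a basis contained in $S$ and apply an automorphism of $\mathbb{Z}_2^n$ so that $\{e_1,\dots,e_n\}\subseteq S$. Since $S$ is Sidon, \Cref{prop:A2-4Cycles} says that a signing is unitary exactly when every $4$-cycle has sign $-1$. In particular, the restriction of any unitary signing $\sigma$ to the spanning $H_n$ is unitary. As explained before \Cref{lem:edge-type}, after switching we may therefore assume $\sigma|_{E(H_n)}=\sigma_n$. Every equivalence class thus contains such a normalised representative.

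\emph{Upper bound.} Fix $s\in S\setminus\{e_1,\dots,e_n\}$ and a normalised unitary $\sigma$. We show that the values of $\sigma$ on the $s$-edges are determined up to one global sign. By \Cref{lem:edge-type}, all $s$-edges are real or all are imaginary. Given one $s$-edge $xy$ and another $zt$, use the parallel $H_n$-paths from the proof of \Cref{lem:edge-type}. Each $4$-cycle $x_jx_{j+1}y_{j+1}y_j$ has sign $-1$ and its two $H_n$-edges carry known values under $\sigma_n$. Hence $\sigma(x_{j+1}y_{j+1})$ is determined by $\sigma(x_jy_j)$, with the conjugation handled consistently by the Hermitian convention. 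So fixing $\sigma$ on one $s$-edge fixes it on all $s$-edges. That one value lies in $\{\pm1\}$ or $\{\pm i\}$, but the real/imaginary type is forced: the type is a cycle-sign invariant of the cycle formed by the dependency $s=e_{j_1}+\dots+e_{j_\ell}$, which can be decomposed into $4$-cycles of a projective-cube-like substructure as in \Cref{thm:Unitary-PC(k)}. This is the step I expect to need the most care. The clean version of the argument is this: if two normalised unitary signings differ on an $s$-edge by a factor $\lambda\in\{\pm1,\pm i\}$, then by propagation they differ by $\lambda$ (or its conjugate, by orientation) on every $s$-edge. Since $\lambda^2$ must be real for the types to agree, and both signings have the same type by the cycle argument, we get $\lambda=\pm1$. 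Thus there are at most two choices per $s$, and at most $2^{|S|-n}$ normalised unitary signings.

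\emph{Lower bound.} Starting from the given unitary $\sigma$, for each subset $T\subseteq S\setminus\{e_1,\dots,e_n\}$ negate all $s$-edges with $s\in T$. By the Sidon property, each $4$-cycle through an $s$-edge contains exactly two $s$-edges. Hence all $4$-cycle signs stay $-1$, and by \Cref{prop:A2-4Cycles} the result $\sigma^T$ is unitary. To separate classes, take distinct $T\neq T'$ and pick $s\in T\triangle T'$. Consider the cycle realising the relation $s=e_{j_1}+\dots+e_{j_\ell}$: it uses one $s$-edge and otherwise only $H_n$-edges. Its signs under $\sigma^T$ and $\sigma^{T'}$ differ by a factor $-1$, so by \Cref{thm:SwitchingEquivalence} the two signings are inequivalent.

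\emph{Finishing.} We must also check that two distinct normalised signings are never switching equivalent. This follows from the same cycle test, which makes the upper count on normalised signings an upper count on classes. Together with the lower bound this gives exactly $2^{|S|-n}$ classes.
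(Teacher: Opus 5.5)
Your proposal is correct and follows essentially the same route as the paper. You normalise to $\sigma_n$ on a spanning copy of $H_n$, show that each label $s\in S\setminus\{e_1,\ldots,e_n\}$ admits exactly two value patterns (the paper simply invokes its projective-cube analysis here, whereas you propagate along parallel $H_n$-paths), and separate the $2^{|S|-n}$ negation patterns by the cycles realising $s=e_{j_1}+\cdots+e_{j_\ell}$ together with \Cref{thm:SwitchingEquivalence}.

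Two slips do not affect validity. First, ``$\lambda^2$ must be real'' should read ``$\lambda$ must be real''; in fact your own propagation along an $H_n$-path from $x$ to $x+s$ and its parallel path back gives $\lambda=\bar\lambda$ directly, so the appeal to the projective-cube theorem can be dropped. Second, the upper bound on classes comes from every class containing a normalised representative, not from the inequivalence check in your final paragraph.
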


	For a cube-like graph $Q_S=(\mathbb{Z}_2^n,S)$, a closed walk $W$ is said to be \emph{$S$-simple} if each element of $S$ corresponds to at most one edge of $W$. The preceding proof implies the following.
	
	\begin{lemma}\label{lem:S-simple-Real}
		Let $S\subseteq \mathbb{Z}_2^n$ be Sidon. If the cube-like graph $Q_S=(\mathbb{Z}_2^n, S)$ admits a unitary signing $\sigma$ whose restriction to $H_n$ is $\sigma_n$, then an $S$-simple closed walk $W$ is
		\begin{itemize}
			\item real if $|W|\equiv 0$ or $1 \pmod{4}$,
			\item imaginary if $|W|\equiv 2$ or $3 \pmod{4}$.
		\end{itemize}
	\end{lemma}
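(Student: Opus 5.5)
The plan is to compare $W$ with the same closed walk traversed backwards, and to relate the two through a sequence of $4$-cycle moves. Write $W$ as starting at a vertex $x$ and using the labels $s_1,s_2,\dots,s_k$ in this order, so that its vertices are $x,\,x+s_1,\,x+s_1+s_2,\dots$ with $s_1+\cdots+s_k=0$. By $S$-simplicity the $s_j$ are pairwise distinct. Since $\sigma$ is unitary, \Cref{prop:A2-4Cycles} gives $\sigma(C)=-1$ for every $4$-cycle $C$. The value $-1$ is real, so it is the same in both directions of traversal. The hypotheses that $\sigma$ restricts to $\sigma_n$ on $H_n$ and \Cref{lem:edge-type} are not needed for this argument; only unitarity, the Sidon property and $S$-simplicity are used.

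\textbf{Step 1 (adjacent transpositions).} Let $W'$ be the closed walk from $x$ whose label sequence is obtained by swapping $s_j$ and $s_{j+1}$. The walks $W$ and $W'$ agree except at the middle vertex between positions $j$ and $j+1$. There $W$ passes through $u+s_j$ and $W'$ through $u+s_{j+1}$, where $u=x+s_1+\cdots+s_{j-1}$. Because $s_j\neq s_{j+1}$ and both are nonzero, the vertices $u,\,u+s_j,\,u+s_j+s_{j+1},\,u+s_{j+1}$ form a genuine $4$-cycle $C$. Using the cancellation remark following \Cref{thm:SwitchingEquivalence} (common edges traversed in opposite directions contribute conjugate values, which cancel), I will check that
\[
\sigma(W')=\sigma(W)\,\sigma(C)=-\sigma(W).
\]
The only care needed is the bookkeeping of directions, and it is harmless precisely because $\sigma(C)=-1$ is real. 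If $k\le 1$ the walk is trivial, so assume $k\ge2$.

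\textbf{Step 2 (reversal).} Let $W^{r}$ be the walk from $x$ with the reversed label sequence $s_k,s_{k-1},\dots,s_1$. Since the labels sum to zero, $x+s_k=x+s_1+\cdots+s_{k-1}$, and similarly for the later partial sums. Hence $W^{r}$ is exactly $W$ traversed in the opposite direction, and so $\sigma(W^{r})=\overline{\sigma(W)}$. On the other hand, the reversal permutation is a product of $\binom{k}{2}$ adjacent transpositions. Each intermediate label sequence is a permutation of the same distinct labels, so it is again an $S$-simple closed walk, and Step 1 applies at every stage. Therefore
\[
\overline{\sigma(W)}=(-1)^{\binom{k}{2}}\sigma(W).
\]

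\textbf{Step 3 (conclusion).} A value $\sigma(W)\in\{\pm1,\pm i\}$ satisfies $\overline{\sigma(W)}=\sigma(W)$ if and only if it is real, and $\overline{\sigma(W)}=-\sigma(W)$ if and only if it is imaginary. Since $\binom{k}{2}$ is even exactly when $k\equiv0,1\pmod4$, this gives the statement.

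The main obstacle is Step 1: making the orientation and cancellation argument rigorous for closed walks rather than cycles. The sign of a walk is simply the product of its directed edge values, so the identity $\sigma(W')=\sigma(W)\sigma(C)$ can be verified directly. On the two edges shared by $W$ and $C$, the walk and the cycle use them in opposite directions, giving factors $z\bar z=1$. The remaining factors are exactly the edge values of $W'$ at the swapped position.
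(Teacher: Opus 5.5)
Your proof is correct and takes the paper's approach: the paper likewise reverses $\myvec{W}$ to $\mycev{W}$ through $\binom{|W|}{2}$ two-label $4$-cycles of sign $-1$, then compares $\sigma(\mycev{W})=\overline{\sigma(\myvec{W})}$ with $(-1)^{\binom{|W|}{2}}\sigma(\myvec{W})$. Your adjacent-transposition bookkeeping, the same device used in the proof of \Cref{coro:QuadraticObstruction}, makes this precise, and you are right that the normalization to $\sigma_n$ is not needed.
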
 
	
	The proof of lemma is similar to that of \Cref{lem:AntipodalCycle}; we show that a closed walk $\myvec{W}$ can be reversed to $\mycev{W}$ by taking symmetric difference with $\binom{|W|}{2}$ two-label 4-cycles.  
	
	This leads to some forbidden structures in $Q_S$. Let $u$ and $v$ be two vertices of $Q_S$ joined by three walks $P_k$, $P_{\ell}$, and $P_r$ of lengths $k$, $\ell$, and $r$, respectively, such that each element of $S$ labels at most one edge among the three walks. The three closed walks
	\[
	C_{k\ell}=\myvec{P_k}\mycev{P_{\ell}},\qquad
	C_{\ell r}=\myvec{P_{\ell}}\mycev{P_r},\qquad
	C_{rk}=\myvec{P_r}\mycev{P_k}
	\]
	are $S$-simple and have respective lengths $k+\ell$, $\ell+r$, and $r+k$. In the product of their signs, each edge contribution appears once in each direction, and hence
	\[
	\sigma(C_{k\ell})\sigma(C_{\ell r})\sigma(C_{rk})=1.
	\]
	Thus at least one of these three signs, say $\sigma(C_{k\ell})$, is real, and the other two are either both real or both imaginary. By \Cref{lem:S-simple-Real}, we have $k+\ell \equiv 0$ or $1 \pmod{4}$. Furthermore, $k+r$ and $\ell+r$ are either both $0$ or $1 \pmod{4}$, or they are both $2$ or $3 \pmod{4}$.
	Using the symmetry between $k$ and $\ell$, this leads to the following possible triples for $(k,\ell,r)$ modulo $4$: $(0,0,x)$, $(0,1,2x)$, $(2,2,x)$, $(2,3,2x)$, where $x$ can be any element of $\{0,1,2,3\}$. These are precisely the triples in which at least two entries are even.
	
	The forbidden cases are summarized in the following theorem.
	
	\begin{theorem}\label{thm:ForbiddenTheta}
		Let $Q_S=(\mathbb{Z}_2^n,S)$ be a Sidon cube-like graph containing a $\Theta$-subgraph whose edges have distinct labels from $S$. If at least two of the three paths of the $\Theta$-subgraph have odd length, then $Q_S$ admits no unitary signing.
	\end{theorem}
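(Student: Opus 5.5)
We argue by contradiction, following the computation given just before the statement. Suppose $Q_S$ admits a unitary signing $\sigma$, and let the $\Theta$-subgraph consist of three internally disjoint $u$–$v$ paths $P_k,P_\ell,P_r$ whose edges all carry distinct labels from $S$.

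\textbf{Normalization.} The labels on $P_k\cup P_\ell\cup P_r$ form a set of $k+\ell+r$ distinct elements of $S$, and the sum of the labels of each closed walk $C_{k\ell},C_{\ell r},C_{rk}$ is $0$. We want the hypothesis of \Cref{lem:S-simple-Real}, which asks that the restriction of $\sigma$ to a spanning hypercube be $\sigma_n$. Since $S$ generates $\mathbb{Z}_2^n$ (we may restrict to the connected component containing the $\Theta$, which is itself a Sidon cube-like graph on $\langle S\rangle$), choose a basis $B\subseteq S$. After a linear automorphism we may take $B$ to be the standard basis. Because $S$ is Sidon, the remark after \Cref{prop:A2-4Cycles} shows that $\sigma|_{H_n}$ is unitary. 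By the switching argument preceding \Cref{lem:edge-type} we may then assume $\sigma|_{H_n}=\sigma_n$. Switching does not change cycle signs, so it does not affect the argument.

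\textbf{Sign computation.} The three closed walks $C_{k\ell}=\myvec{P_k}\mycev{P_\ell}$, $C_{\ell r}$ and $C_{rk}$ are $S$-simple because all labels are distinct. Their lengths are $k+\ell$, $\ell+r$ and $r+k$. Each edge of the $\Theta$ is traversed once in each direction across the three walks, so conjugate contributions cancel and
\[
\sigma(C_{k\ell})\sigma(C_{\ell r})\sigma(C_{rk})=1 .
\]
Hence the number of imaginary walks among the three is even. By \Cref{lem:S-simple-Real}, a walk is real exactly when its length is $0$ or $1 \pmod 4$. The conclusion is that at most two of the residues $(k+\ell,\ell+r,r+k)\bmod 4$ fall in $\{2,3\}$, and the number that do is even.

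\textbf{Parity case check.} Suppose, without loss of generality, that $k$ and $\ell$ are odd. Then $k+\ell$ is even, so the real/imaginary type of $C_{k\ell}$ is fixed by $k+\ell\equiv 0$ or $2 \pmod 4$. The other two lengths $\ell+r$ and $r+k$ have the same parity.
\begin{itemize}
\item If $r$ is even, both are odd. One checks that $\ell+r$ and $k+r$ are both real or both imaginary exactly when $k\equiv \ell \pmod 4$, since adding $2$ swaps $1\leftrightarrow 3$, which swaps types. But then $k+\ell\equiv 2 \pmod 4$, so $C_{k\ell}$ is imaginary, and the total count of imaginary walks is odd. If instead $k\not\equiv\ell$, then $C_{k\ell}$ is real and exactly one of the other two is imaginary, again an odd count.
\item If $r$ is odd, the lengths $k+\ell$, $\ell+r$, $r+k$ are all even, with values $\equiv 0$ or $2$. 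Their sum is $2(k+\ell+r)\equiv 2 \pmod 4$, so an odd number of them are $\equiv 2$, giving an odd number of imaginary walks.
\end{itemize}
In every case the count of imaginary walks is odd, contradicting the displayed product. This is the enumeration of admissible triples $(0,0,x),(0,1,2x),(2,2,x),(2,3,2x)$ already carried out before the statement.

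\textbf{Main obstacle.} The delicate step is justifying \Cref{lem:S-simple-Real}, namely that the real/imaginary type of an $S$-simple closed walk depends only on its length mod $4$. The plan there is to reverse the walk using $\binom{|W|}{2}$ two-label $4$-cycles, each of sign $-1$ by \Cref{prop:A2-4Cycles}, so that $\sigma(\mycev W)=(-1)^{\binom{|W|}{2}}\sigma(\myvec W)$. Since also $\sigma(\mycev W)=\overline{\sigma(\myvec W)}$, the walk is real iff $\binom{|W|}{2}$ is even. We take this lemma as given from the paper.
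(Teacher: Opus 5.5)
Your proposal is correct and follows essentially the same route as the paper: the three $S$-simple closed walks have sign product $1$, \Cref{lem:S-simple-Real} fixes the real/imaginary type of each walk by its length modulo $4$, and a parity check rules out two odd paths. The only difference is presentational. You show directly that the number of imaginary walks is odd whenever two of $k,\ell,r$ are odd, whereas the paper lists the admissible residue triples $(0,0,x),(0,1,2x),(2,2,x),(2,3,2x)$.
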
 
	
	More generally, suppose that $S$ contains three pairwise disjoint subsets $A,B,C$ whose element sums are equal to the same value $u$. Ordering the elements of each subset gives three walks $P_A,P_B,P_C$ from $0$ to $u$, whose edge labels are $A,B,C$, respectively. These walks need not be internally vertex disjoint. Their pairwise concatenations $P_A\overleftarrow{P_B}$, $P_B\overleftarrow{P_C}$, and $P_C\overleftarrow{P_A}$ are $S$-simple closed walks, so the preceding argument applies. Hence, if at least two of $A,B,C$ have odd cardinality, then $Q_S$ admits no unitary signing. The case in which one of the three sets is empty is included, with the corresponding walk taken to be the trivial walk at $0$.

	In particular, if $S$ contains two disjoint odd dependency relations, then $Q_S$ admits no unitary signing.
	
	We call a $4$-cycle in a cube-like graph a \emph{two-label $4$-cycle} if its edge labels, in cyclic order, are $s,t,s,t$ for distinct labels $s$ and $t$. We also record the following related result for Cartesian products of cycles.
	
	\begin{theorem}\label{thm:CartesianOddCycles}
		Let $k$ and $\ell$ be positive integers with $k,\ell \geq 3$. The Cartesian product
		\( C_{k}\square C_{\ell} \)
		admits a complex signing in which every two-label $4$-cycle has sign $-1$ if and only if $k\ell$ is even.
	\end{theorem}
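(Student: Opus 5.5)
We view $C_k\square C_\ell$ as the graph on $\mathbb Z_k\times\mathbb Z_\ell$ whose edges are \emph{horizontal} edges $(a,b)(a+1,b)$ and \emph{vertical} edges $(a,b)(a,b+1)$. The two-label $4$-cycles are exactly the unit squares $Q_{a,b}$ with corners $(a,b),(a+1,b),(a+1,b+1),(a,b+1)$. The cycle space of the torus graph is generated by the $k\ell$ squares together with two non-contractible cycles: a horizontal cycle $R$ (a row) and a vertical cycle $V$ (a column). The only relation among the squares is that their sum over all $k\ell$ of them is zero. We will use this in two directions.

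\emph{Necessity.} Suppose every square has sign $-1$ and orient every square counterclockwise. We take the product of $\sigma(Q_{a,b})$ over all $a,b$. Each edge lies in exactly two squares and is traversed in opposite directions in them, so its two contributions are conjugate and multiply to $1$. This is the same cancellation used in the remark after \Cref{thm:SwitchingEquivalence}. Hence $\prod_{a,b}\sigma(Q_{a,b})=1$, while by hypothesis the product equals $(-1)^{k\ell}$. Therefore $k\ell$ is even. This direction needs no case analysis and holds for arbitrary complex signings.

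\emph{Sufficiency.} Assume without loss of generality that $k$ is even; otherwise swap the roles of the two factors. We try the Huang-type real signing: every horizontal edge $(a,b)(a+1,b)$ gets $+1$, and the vertical edge $(a,b)(a,b+1)$ gets $(-1)^a$. A square $Q_{a,b}$ then contains two horizontal $+1$ edges and the vertical edges at columns $a$ and $a+1$, so its sign is $(-1)^a(-1)^{a+1}=-1$, provided the indices $a$ and $a+1$ have opposite parity. This holds for $0\le a\le k-2$. For the wrap-around square between columns $k-1$ and $0$ we need $(-1)^{k-1}(-1)^0=-1$, which is exactly where the evenness of $k$ enters. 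In the vertical direction there is no constraint, since the vertical labels depend only on $a$ and the square between rows $\ell-1$ and $0$ is handled identically. So the signing is valid for every $\ell\ge3$, even or odd.

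The only delicate point is bookkeeping in the wrap-around squares: checking that the formula $(-1)^a$ is well defined modulo $k$ (it is, since $k$ is even), and that the degenerate concerns are excluded because $k,\ell\ge3$ ensures the squares are genuine $4$-cycles with no parallel edges. I expect no real obstacle beyond this. The construction is real-valued, so the signing is in fact orthogonal, which is a slightly stronger conclusion than the theorem requires.
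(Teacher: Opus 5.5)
Your proof is correct and follows the paper's argument in both directions. For sufficiency the paper likewise puts $+1$ on every copy of $C_k$ (with $k$ even) and alternates $+1$ and $-1$ across the $k$ copies of $C_\ell$; for necessity it orients the $k\ell$ square faces of the torus embedding consistently, so the two conjugate contributions of each edge cancel and the product of all face signs is $+1$, contradicting $(-1)^{k\ell}=-1$ when $k\ell$ is odd.
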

	
	\begin{proof}
		For sufficiency, if $k\ell$ is even, then at least one of $k$ and $\ell$, say $k$, is even. Then in \( C_{k}\square C_{\ell} \) assign $+1$ to each edge of every copy of $C_{k}$. For the copies of $C_{\ell}$, alternately assign $+1$ or $-1$ to all edges. This is possible because there are an even number of such copies.  
		
		For necessity, assume $k\ell$ is odd and, toward a contradiction, suppose that \(\sigma\) is a complex signing on \(C_{k}\square C_{\ell} \) in which every two-label $4$-cycle has sign $-1$. Observe that \( C_{k}\square C_{\ell} \) admits a natural embedding on the torus whose faces are precisely these two-label $4$-cycles. Since the torus is orientable, we may orient all faces consistently so that each edge belongs to two faces, with opposite orientations. Thus the product of the two signs on each edge with respect to its orientations on the two faces it belongs to is $+1$. So the product of the signs of all faces is $+1$ as well. This contradicts the assumption that the sign of each face is $-1$ because there is an odd number of them ($k\ell$). 
	\end{proof}

Once the existence of a unitary signing is settled, one may ask when such a signing can be chosen to be orthogonal. The preceding description of $S$-simple closed walks gives a direct answer. For a minimal dependency relation
\[
D=\{e_1,\ldots,e_j,s\},
\qquad
s=e_1+\cdots+e_j,
\]
the necessity can also be seen directly from the projective-cube theorem: the subgraph obtained by keeping only the edges with labels in $D$ is a disjoint union of copies of $PC(j)$, and, since $S$ is Sidon, the restriction of a unitary signing is still unitary on each copy. The argument below treats all $S$-simple closed walks at once.

\begin{theorem}\label{thm:Unitary->Orthogonal}
	If a Sidon cube-like graph $Q_S$ admits a unitary signing, then it admits an orthogonal signing if and only if it has no $S$-simple closed walk whose length is congruent to $2$ or $3$ modulo $4$.
\end{theorem}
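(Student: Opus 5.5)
Throughout, assume $S$ generates $\mathbb Z_2^n$ (work componentwise otherwise) and contains the standard basis. Fix a unitary signing $\sigma$ of $Q_S$; after switching, its restriction to $H_n$ is $\sigma_n$. By \Cref{lem:edge-type}, each $s\in S$ is either a real label (all its edges real) or an imaginary label (all its edges imaginary). The key reformulation is that a unitary signing is switching equivalent to an orthogonal one if and only if every directed cycle has real sign. One direction holds because switching preserves cycle signs and an orthogonal signing has only real cycles. For the other, if all cycles are real, pick a spanning tree, switch as in the proof of \Cref{thm:SwitchingEquivalence} so that tree edges are $+1$; then every non-tree edge has value equal to the sign of its fundamental cycle, hence is real. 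The resulting signing is orthogonal and still unitary, because switching is conjugation by a diagonal unitary matrix. So the statement becomes: every cycle of $(Q_S,\sigma)$ is real iff no $S$-simple closed walk has length $\equiv 2,3 \pmod 4$. I would phrase everything for closed walks, since their signs are defined the same way and are also switching invariant.

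Necessity is immediate from \Cref{lem:S-simple-Real}. If some $S$-simple closed walk $W$ has $|W|\equiv 2,3\pmod 4$, then $W$ is imaginary, so $\sigma(W)\in\{\pm i\}$ for every unitary signing normalized on $H_n$. Every unitary signing can be so normalized, since $S$ is Sidon and its restriction to $H_n$ is unitary. Hence no unitary signing is switching equivalent to an orthogonal one. Any orthogonal signing is itself unitary, which gives a contradiction.

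For sufficiency, assume every $S$-simple closed walk has length $\equiv 0,1\pmod 4$. I will show every closed walk is real. A closed walk is real iff it uses an even number of edges with imaginary labels, counted with multiplicity. Note that the edges with a given label $s$ on a closed walk $W$ occur an even number of times in total, modulo the relation that the multiset of labels sums to $0$. So I reduce modulo $2$: let $L(W)\subseteq S$ be the set of labels used an odd number of times. Then $L(W)$ sums to $0$, and the parity of the number of imaginary edges of $W$ equals $|L(W)\cap I|\bmod 2$, where $I$ is the set of imaginary labels. Now order the elements of $L(W)$ arbitrarily and take the walk from $0$ along them. This is an $S$-simple closed walk $W'$, and it uses exactly the labels in $L(W)$, so its reality equals that of $W$. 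By \Cref{lem:S-simple-Real}, $W'$ is real iff $|L(W)|\equiv 0,1\pmod 4$, which holds by hypothesis. Hence $W$ is real. If $L(W)=\emptyset$, then $W$ has an even number of imaginary edges directly.

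The main obstacle is making the parity reduction clean. I need the reality of a closed walk to depend only on the number of imaginary edges modulo $2$, so that edges traversed twice cancel. This holds because each imaginary edge contributes a factor $\pm i$ and each real edge a factor $\pm 1$, regardless of direction. Once this is in place, the argument rests entirely on \Cref{lem:edge-type} and \Cref{lem:S-simple-Real}.
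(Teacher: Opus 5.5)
Your argument is correct, and its necessity half is the same as the paper's.

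For sufficiency you take a different route. You show that every closed walk of the normalized signing is real. By \Cref{lem:edge-type}, reality depends only on the parity of the number of edges with imaginary labels, and hence only on the odd-multiplicity label set $L(W)$. You then apply \Cref{lem:S-simple-Real} to the $S$-simple walk $W'$ through $L(W)$. Finally you invoke a general criterion, proved by the spanning-tree normalization of \Cref{thm:SwitchingEquivalence}: a signing with all cycles real is switching equivalent to a real one.

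The paper instead applies \Cref{lem:S-simple-Real} only to the fundamental walks $C_s$, $s\in S\setminus\{e_1,\ldots,e_n\}$, of length $|s|+1$. The other edges of $C_s$ lie in $H_n$ and are real under $\sigma_n$, so the edge labeled $s$ is real. \Cref{lem:edge-type} then makes every $s$-edge real, so the normalized signing is already orthogonal and no further switching is needed.

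The paper's version is shorter and shows that the hypothesis is needed only for these $|S|-n$ fundamental walks. Yours uses the hypothesis on all $S$-simple closed walks. In exchange it gives the stronger intermediate statement that every closed walk is real, and it isolates the reusable criterion ``switching equivalent to an orthogonal signing iff all cycles are real''. Your final switching step is in fact unnecessary: once all closed walks are real, the walk $C_s$ already forces the label $s$ to be real.

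One wording point: the sentence saying that edges with a given label occur an even number of times ``modulo the relation that the multiset of labels sums to $0$'' is not right as written. What you actually use, correctly, is that $L(W)$ sums to $0$ and that the number of imaginary edges of $W$ is congruent to $|L(W)\cap I|$ modulo $2$.
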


\begin{proof}
	Suppose first that $Q_S$ admits an orthogonal signing. Since $S$ is Sidon, the restriction of this signing to a spanning copy of $H_n$ is an orthogonal signing of $H_n$. After switching, we may assume that this restriction is $\sigma_n$. Every closed walk in an orthogonal signing is real, whereas by \Cref{lem:S-simple-Real} every $S$-simple closed walk of length congruent to $2$ or $3$ modulo $4$ is imaginary. Hence no such $S$-simple closed walk can occur.
	
	Conversely, suppose that $Q_S$ admits a unitary signing and has no $S$-simple closed walk whose length is congruent to $2$ or $3$ modulo $4$. After switching, we may assume that the restriction of the signing to a spanning copy of $H_n$ is $\sigma_n$. Let
	\[
	s\in S\setminus\{e_1,\ldots,e_n\},
	\]
	and write
	\[
	E_s=\{e_{j_1},\ldots,e_{j_k}\},
	\qquad
	s=e_{j_1}+\cdots+e_{j_k}.
	\]
	Then
	\[
	0,\ e_{j_1},\ e_{j_1}+e_{j_2},\ \ldots,\ e_{j_1}+\cdots+e_{j_k}=s,\ 0
	\]
	is an $S$-simple closed walk $C_s$ of length $k+1=|s|+1$, whose last edge is labeled $s$ and whose other edges lie in $H_n$. By assumption,
	\[
	|C_s|\equiv0\text{ or }1\pmod4,
	\]
	so \Cref{lem:S-simple-Real} implies that $C_s$ is real. All the edges of $C_s$ lying in $H_n$ have real signs under $\sigma_n$, and therefore the edge labeled $s$ is also real. By \Cref{lem:edge-type}, all edges of $Q_S$ labeled $s$ are then real. Since this holds for every $s\in S\setminus\{e_1,\ldots,e_n\}$, the entire signing is real, and hence it is an orthogonal signing.
\end{proof}

	\section{$\Theta$-property}
	
    We next show that the $\Theta$-property is the key condition governing the existence of the signings considered here. 
    
    We first address the following general question: given a graph and a partition of its closed walks into four sets labeled $\{i, i^2, i^3, i^4\}$, does there exist a complex signing of the edges for which the associated sign of any closed walk is the given one? 
    In this section, a closed walk is regarded up to cyclic rotation of its vertex sequence; in particular, no starting vertex is distinguished. Thus, if $W=v_0v_1\cdots v_{k-1}v_0$, every cyclic rotation of this sequence represents the same closed walk. Trivial walks are also allowed, and their sign under a complex signing is $1$.
    The answer, together with the details, is given in the following theorem which extends the answer to a similar question in \cite{NSZ21}. 
	
	\begin{theorem}
		A mapping $\eta$ of the closed walks of a graph $G$ to $\{i,i^2,i^3,i^4\}$ is obtained from a complex signing $\sigma$ on $G$ if and only if, for any three walks $\myvec{P}$, $\myvec{Q}$, and $\myvec{R}$ with the same endpoints,
		\[
		\eta(\myvec{P}\mycev{Q})\eta(\myvec{Q}\mycev{R})\eta(\myvec{R}\mycev{P})=1.
		\]
	\end{theorem}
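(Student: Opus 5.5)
Necessity is a direct computation. Suppose $\eta(W)=\sigma(W)$ for a complex signing $\sigma$, where $\sigma(W)$ is the product of the values along the walk, conjugating an arc's value when it is traversed against its orientation. Note that $\sigma(\myvec{P})$ is defined for any walk, not only closed ones. Reversing a walk conjugates its value: $\sigma(\mycev{P})=\overline{\sigma(\myvec{P})}=\sigma(\myvec{P})^{-1}$, since all values are units. For three walks $P,Q,R$ from $u$ to $v$ we then get
\[
\sigma(\myvec{P}\mycev{Q})\sigma(\myvec{Q}\mycev{R})\sigma(\myvec{R}\mycev{P})
=\sigma(\myvec{P})\sigma(\myvec{Q})^{-1}\sigma(\myvec{Q})\sigma(\myvec{R})^{-1}\sigma(\myvec{R})\sigma(\myvec{P})^{-1}=1 .
\]
This step also uses that the value of a closed walk is independent of cyclic rotation, which holds because $\mathbb Z_4$ is abelian.

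For sufficiency I would mimic the proof of \Cref{thm:SwitchingEquivalence}, working on each connected component separately.

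\emph{Step 1: elementary consequences of the condition.}
\begin{itemize}
\item Taking $P=Q=R$ equal to the trivial walk gives $\eta(\text{trivial})^3=1$. Since the trivial walk has sign $1$ by convention, this is consistent.
\item Taking $Q=R$ trivial and $P$ a closed walk at $u$ gives $\eta(\myvec{P})\eta(\text{trivial})\eta(\mycev{P})=1$, so $\eta(\mycev{P})=\eta(\myvec{P})^{-1}$.
\item Taking $P$ a walk $u\to v$, $Q=P$, and $R$ arbitrary gives $\eta(\myvec{P}\mycev{P})\,\eta(\myvec{P}\mycev{R})\,\eta(\myvec{R}\mycev{P})=1$. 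Combined with the previous item, this yields $\eta(\myvec{P}\mycev{P})=1$. So backtracking walks have sign $1$.
\end{itemize}
The key multiplicativity statement is: if $W_1=\myvec{A}\mycev{B}$ and $W_2=\myvec{B}\mycev{C}$ share the segment $B$, then $\eta(\myvec{A}\mycev{C})=\eta(W_1)\eta(W_2)$. This is the three-walk condition rewritten using the inverse rule.

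\emph{Step 2: construct $\sigma$.} Fix a spanning tree $T$ and a root $r$. For each vertex $v$ let $T_v$ be the tree path from $r$ to $v$. Define
\[
\sigma(\myvec{uv})=\eta\bigl(\myvec{T_u}\,uv\,\mycev{T_v}\bigr).
\]
Consistency under reversal of $uv$ follows from the inverse rule. An edge with $\sigma(\myvec{uv})\in\{\pm1\}$ is declared an edge; otherwise it is an arc carrying $\pm i$ in the appropriate direction. Tree edges get value $1$, because $\myvec{T_u}\,uv\,\mycev{T_v}$ is then a backtracking walk.

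\emph{Step 3: $\sigma$ reproduces $\eta$ (the main obstacle).} Let $W=v_0v_1\cdots v_{k-1}v_0$ be a closed walk. Insert $\mycev{T_{v_j}}\myvec{T_{v_j}}$ at each vertex $v_j$; this rewrites $W$, up to rotation and backtracking insertions, as a concatenation of the lollipop walks $\myvec{T_{v_j}}v_jv_{j+1}\mycev{T_{v_{j+1}}}$ based at $r$.

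I then need two facts:
\begin{itemize}
\item inserting a backtrack $\myvec{P}\mycev{P}$ does not change $\eta$;
\item $\eta$ is multiplicative on concatenations of closed walks at $r$.
\end{itemize}
Both should follow from the three-walk condition with suitable choices of $P,Q,R$.
\begin{itemize}
\item For concatenation of closed walks $X,Y$ at $r$: take $P=X$, $Q$ trivial, $R=\mycev{Y}$, which gives $\eta(X)\,\eta(Y)\,\eta(\mycev{Y}\mycev{X})=1$. Hence $\eta(XY)=\eta(X)\eta(Y)$ by the inverse rule.
\item For backtrack insertion, use walks $A\,\myvec{P}\mycev{P}$ and $A$ with a common endpoint, together with the backtrack lemma.
\end{itemize}
Care is needed with basepoints, since walks are considered only up to rotation; this is where I expect the bookkeeping to be delicate. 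With these facts, $\eta(W)=\prod_j\sigma(\myvec{v_jv_{j+1}})=\sigma(W)$, which completes sufficiency.
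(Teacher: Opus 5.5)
Your proof is correct, but the sufficiency part takes a different route from the paper.

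The paper sets $\sigma=1$ on $T$ and puts $\sigma(\myvec{xy})=\eta(\myvec{C}_{xy})$, where $C_{xy}$ is the fundamental cycle of the non-tree edge $xy$. It then proves $\eta(\myvec{W})=\sigma(\myvec{W})$ by induction on the number of non-tree edges of $W$. Writing $W=xyP$, a single application of the three-walk condition to the $x$--$y$ walks $xy$, $T_{xy}$, $\mycev{P}$ gives $\eta(\myvec{W})=\sigma(\myvec{xy})\,\eta(\myvec{W}_{xy})$ with $W_{xy}=T_{xy}P$. The base case, a closed walk inside $T$, collapses by deleting backtracks.

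You instead define $\sigma$ by rooted lollipops $\myvec{T_u}\,uv\,\mycev{T_v}$. These reduce to the paper's $C_{uv}$ after deleting the common tree prefix and rotating, so the two signings coincide. You then factor $W$ all at once into lollipops at $r$. In effect you show that $\eta$, restricted to closed walks at $r$, is a homomorphism from $\pi_1(G,r)$ to $\{\pm1,\pm i\}$. It is therefore determined by its values on the free generators, one per non-tree edge. This buys a uniform treatment: tree edges get $1$ automatically and no induction is needed. The price is that you need backtrack insertion at arbitrary positions, whereas the paper's induction needs backtrack deletion only for closed walks inside $T$.

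Two points to tighten:
\begin{enumerate}
\item $\eta(E_v)=1$ is not a convention for the arbitrary map $\eta$. It follows from $\eta(E_v)^3=1$, because cubing is a bijection on the fourth roots of unity. You need this fact for your inverse rule.
\item Your worry about basepoints is unfounded, since $\eta$ is defined on rotation classes. To insert $\mycev{T_v}\myvec{T_v}$ at an occurrence of $v$, base $W$ at $v$ and apply your multiplicativity argument there; it works at every vertex, not only at $r$. Combine this with $\eta(X\mycev{X})=1$. Equivalently, apply the condition to the three walks $A\myvec{P}\mycev{P}$, $A$, $B$, where $W=A\mycev{B}$.
\end{enumerate}
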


	\begin{proof}
		Our proof follows the same ideas as in \cite{NSZ21} (Theorem 10). Necessity is immediate. For sufficiency, first observe that the stated condition implies the usual rule for reversing a closed walk. Let $E_v$ denote the trivial walk at a vertex $v$. Taking $\myvec{P}=\myvec{Q}=\myvec{R}=E_v$ gives $\eta(E_v)^3=1$, and hence $\eta(E_v)=1$. More generally, taking $\myvec{P}=\myvec{Q}=\myvec{R}$ gives
		\[
		\eta(\myvec{P}\mycev{P})=1.
		\]
		If $\myvec{W}$ is a closed walk based at $v$, taking $\myvec{P}=\myvec{W}$ and $\myvec{Q}=\myvec{R}=E_v$ gives
		\[
		\eta(\myvec{W})\eta(\mycev{W})=1,
		\]
		and therefore $\eta(\mycev{W})=\overline{\eta(\myvec{W})}$. Similarly, if $\myvec{A}$ and $\myvec{B}$ are closed walks based at the same vertex, applying the condition with $\myvec{P}=\myvec{A}$, $\myvec{Q}=\mycev{B}$, and $\myvec{R}=E_v$ gives
		\[
		\eta(\myvec{A}\myvec{B})=\eta(\myvec{A})\eta(\myvec{B}).
		\]
		Since closed walks are considered up to cyclic rotation, these observations also show that inserting or deleting a backtrack $xyx$ does not change the value of $\eta$.
		
		We may now work with connected graphs. Take a spanning tree $T$ and set $\sigma(e)=1$ for each edge $e$ of $T$. For any other directed edge $xy$, let $\myvec{C}_{xy}$ be the unique cycle in $T+xy$, directed so that it traverses $xy$ from $x$ to $y$, and define
		\(
		\sigma(\myvec{xy})=\eta(\myvec{C}_{xy}).
		\)
		The reversal rule shows that this gives a well-defined complex signing. It remains to show that $\eta(\myvec{W})=\sigma(\myvec{W})$ for every closed walk $W$. If $W$ is contained in $T$, then successive deletion of backtracks reduces it to a trivial walk, so $\eta(W)=1=\sigma(W)$. Otherwise, after a cyclic rotation, write $W=xyP$, where $xy$ is an edge outside $T$, and let $T_{xy}$ be the $x$--$y$ path in $T$. Let $W_{xy}=T_{xy}P$. Applying the stated condition to the three $x$--$y$ walks $xy$, $T_{xy}$, and $\mycev{P}$ gives
		\[
		\eta(\myvec{C}_{xy})\eta(\myvec{W}_{xy})\overline{\eta(\myvec{W})}=1 \quad
		\Rightarrow \quad
		\eta(\myvec{W})=\sigma(\myvec{xy})\eta(\myvec{W}_{xy}).
		\]
		Since $W_{xy}$ contains one fewer edge outside $T$, by induction on the number of such edges we conclude that $\eta(\myvec{W})=\sigma(\myvec{W})$ for every closed walk $W$.
	\end{proof}

\subsection{$\Theta$-property and unitary signings}

The preceding obstructions can be expressed uniformly as a property of the generating set $S$.

\begin{definition}
		A subset $S$ of $\mathbb{Z}_2^n$ is said to have the $\Theta$-property if for any three pairwise disjoint subsets of $S$ whose sums are equal at least two are of even size.
	\end{definition}
	
	One of the three disjoint subsets is allowed to be empty. The sum over the empty set is understood to be $0$. Thus the condition implies that $S$ cannot have two disjoint subsets of odd size for each of which the sum of the elements is $0$. A useful reformulation of the $\Theta$-property on a subset of $\mathbb{Z}_2^n$ is as follows.

\begin{lemma}\label{lem:Theta-q-linear}
	Let $S\subseteq\mathbb Z_2^n$, and let
	\(
	\mathcal D
	=
	\left\{D\subseteq S:\sum_{d\in D}d=0\right\}
	\)
	be the space of dependency relations of $S$, where addition is symmetric difference. Define
	\(
	q(D)=\binom{|D|}{2}\pmod2.
	\)
	Then the following statements are equivalent:
	\begin{enumerate}
		\item[(1)] $S$ has the $\Theta$-property;
		\item[(2)] $q$ is a linear function on $\mathcal D$;
		\item[(3)] for every $D_1,D_2\in\mathcal D$, we have $|D_1\cap D_2|\equiv |D_1||D_2|\pmod2$.
		\end{enumerate}
	
	Equivalently, two dependency relations have odd intersection if and only if both have odd size.
\end{lemma}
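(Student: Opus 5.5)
The plan is to prove (2)$\Leftrightarrow$(3) by a parity identity for $q$, and then (1)$\Leftrightarrow$(3) by turning a $\Theta$-configuration into a pair of dependency relations and back. The final sentence of the lemma is just a restatement of (3): the product $|D_1||D_2|$ is odd exactly when both sizes are odd.

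\emph{Step 1: (2)$\Leftrightarrow$(3).} I will establish, for arbitrary finite sets $A,B$ (not only elements of $\mathcal D$), the identity
\[
q(A\triangle B)\equiv q(A)+q(B)+|A||B|+|A\cap B|\pmod 2 .
\]
Put $a=|A|$, $b=|B|$, $c=|A\cap B|$, so that $|A\triangle B|=(a-c)+(b-c)$. Apply $\binom{x+y}{2}=\binom{x}{2}+\binom{y}{2}+xy$ three times:
\begin{itemize}
\item with $x=a-c$, $y=b-c$, to expand $q(A\triangle B)$;
\item with $x=a-c$, $y=c$, to expand $q(A)$;
\item with $x=b-c$, $y=c$, to expand $q(B)$.
\end{itemize}
The two $\binom{c}{2}$ terms cancel mod $2$. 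The remaining products reduce mod $2$, using $c^2\equiv c$, to $ab+c$. Since $\mathcal D$ is closed under symmetric difference, $q$ is linear on $\mathcal D$ if and only if $|D_1||D_2|+|D_1\cap D_2|\equiv 0$ for all $D_1,D_2\in\mathcal D$. This is exactly (3).

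\emph{Step 2: dictionary between triples and pairs.} Let $A,B,C\subseteq S$ be pairwise disjoint with equal sums. Then $D_1=A\cup B$ and $D_2=B\cup C$ lie in $\mathcal D$, because their sums are $u+u=0$, and $D_1\cap D_2=B$. Conversely, let $D_1,D_2\in\mathcal D$ and set $A=D_1\setminus D_2$, $B=D_1\cap D_2$, $C=D_2\setminus D_1$. These sets are pairwise disjoint. Since $D_1$ sums to $0$, $\sum A=\sum B$; similarly, since $D_2$ sums to $0$, $\sum C=\sum B$. Empty sets are allowed in both directions, which matches the convention in the definition of the $\Theta$-property.

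\emph{Step 3: parity check.} Under this dictionary, condition (3) for the pair $(D_1,D_2)$ reads
\[
b\equiv (a+b)(b+c)\pmod 2,
\]
with $a,b,c$ now the sizes of $A,B,C$. I will check the eight parity patterns of $(a,b,c)$ directly.
\begin{itemize}
\item If at most one of $a,b,c$ is odd, the congruence holds.
\item If at least two are odd, it fails. For example, $(1,1,0)$ gives $1\not\equiv 0$, and $(1,0,1)$ gives $0\not\equiv 1$.
\end{itemize}
So (3) holds for the pair $(D_1,D_2)$ if and only if the triple $(A,B,C)$ satisfies the $\Theta$-condition. By the bijective correspondence of Step 2, (1) and (3) are equivalent.

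The only place needing care is the binomial bookkeeping in Step 1; everything else is finite case checking. To guard against errors there, I will sanity-check the identity on small cases, such as $A=B$ and $A\cap B=\emptyset$.
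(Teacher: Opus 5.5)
Your proof is correct and follows essentially the same route as the paper: you use the same decomposition into $D_1\setminus D_2$, $D_1\cap D_2$, $D_2\setminus D_1$ (with the letters permuted) for (1)$\Leftrightarrow$(3), and the same identity $q(D_1\oplus D_2)+q(D_1)+q(D_2)\equiv |D_1||D_2|+|D_1\cap D_2| \pmod 2$ for (2)$\Leftrightarrow$(3). The differences are cosmetic: you derive that identity from $\binom{x+y}{2}=\binom{x}{2}+\binom{y}{2}+xy$ rather than from the incidence-vector expansion $q(D)=\sum_{k<l}(x_D)_k(x_D)_l$, and you check the eight parity patterns explicitly where the paper states the equivalence with $\alpha=(\alpha+\beta)(\alpha+\gamma)$ directly.
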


\begin{proof}
	We first prove the equivalence of (1) and (3). Given $D_1,D_2\in\mathcal D$ let
	\(
	A=D_1\cap D_2,
	B=D_1\setminus D_2,
	C=D_2\setminus D_1.
	\)
	Observe that $A,B,C$ are pairwise disjoint. Since $D_1$ and $D_2$ are dependency relations,
	\(
	\sum_{a\in A}a
	=
	\sum_{b\in B}b
	=
	\sum_{c\in C}c.
	\)
	
	Let
	\(
	\alpha=|A|\pmod2,
	\beta=|B|\pmod2,
	\gamma=|C|\pmod2.
	\)
	Then
	\(
	|D_1|\equiv\alpha+\beta,
	|D_2|\equiv\alpha+\gamma
	\pmod2.
	\)
	The condition that at most one of $|A|,|B|,|C|$ is odd is equivalent to
	\(
	\alpha=(\alpha+\beta)(\alpha+\gamma),
	\)
	which is precisely
	\(
	|D_1\cap D_2|\equiv |D_1||D_2|\pmod2.
	\)
	This proves that the $\Theta$-property implies (3).

	 Conversely, if $A,B,C\subseteq S$ are pairwise disjoint and have the same sum, then
	\(
	D_1=A\cup B,
	D_2=A\cup C
	\)
	belong to $\mathcal D$. Applying (3) to $D_1,D_2$ gives the same parity identity, and hence at most one of $|A|,|B|,|C|$ is odd. Thus (3) implies the $\Theta$-property.\\
	
	It remains to prove the equivalence of (2) and (3). Let $x_D$ denote the incidence vector of $D$. Then we may view $q(D)$ as 
	\(
	q(D)=\sum_{k<l}(x_D)_k(x_D)_l\pmod2.
	\)
	
	By applying this expansion to the two sides we may verify that 
	\[
	q(D_1\oplus D_2)+q(D_1)+q(D_2)
	\equiv
	|D_1||D_2|+|D_1\cap D_2|
	\pmod2.
	\]
	Therefore $q(D_1\oplus D_2)=q(D_1)+q(D_2)$ for every $D_1,D_2\in\mathcal D$ if and only if (3) holds. 
\end{proof}

We are now ready to state and prove one of the main conclusions of this work.

\begin{theorem}\label{thm:ThetaSufficient}
Let $S\subseteq\mathbb Z_2^n$ generate $\mathbb Z_2^n$ and $0\notin S$. If $S$ has the $\Theta$-property, then $Q_S$ admits a unitary signing. Moreover, the signing may be chosen so that every two-label $4$-cycle, 
(edge labels $a,b,a,b$ for some $a,b\in S$, $a\neq b$)
has sign $-1$.
\end{theorem}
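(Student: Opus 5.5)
The plan is to reduce unitarity to a local condition and then build the signing algebraically from a Clifford-type representation. For the reduction, let $m=|S|$ and $M=\sum_{s\in S}P_s$, where $P_s$ is the signed matrix of the perfect matching of $Q_S$ labelled $s$. We will arrange three properties:
\begin{itemize}
\item[(a)] each $P_s$ is monomial with entries in $\{\pm1,\pm i\}$;
\item[(b)] each $P_s$ is Hermitian with $P_s^2=I$;
\item[(c)] $P_sP_t=-P_tP_s$ for all $s\neq t$.
\end{itemize}
Then $M$ is Hermitian and
\[
M^2=\sum_s P_s^2+\sum_{s<t}(P_sP_t+P_tP_s)=mI.
\]
Condition (c) says exactly that every two-label $4$-cycle has sign $-1$, which gives the second assertion. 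This is the same mechanism as in \Cref{prop:A2-4Cycles}, but it now handles non-Sidon $S$, where several $2$-paths may join two vertices: they cancel in pairs $(s,t),(t,s)$.

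To construct the $P_s$, consider the group $\Gamma$ generated by formal symbols $\gamma_s$ ($s\in S$) and a central element $z$ of order $2$, subject to $\gamma_s^2=1$ and $\gamma_s\gamma_t=z\gamma_t\gamma_s$ for $s\ne t$. This is the finite Clifford group, of order $2^{m+1}$. Every element is $z^{\epsilon}\gamma_D$ with $D\subseteq S$, where $\gamma_D$ is the ordered product over $D$. A routine computation gives
\[
\gamma_{D_1}\gamma_{D_2}=z^{|D_1||D_2|+|D_1\cap D_2|}\gamma_{D_2}\gamma_{D_1},
\qquad
\gamma_D^2=z^{q(D)}.
\]
Let $A\le\Gamma$ be generated by $z$ and the $\gamma_D$ with $D\in\mathcal D$. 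By \Cref{lem:Theta-q-linear}(3), the $\Theta$-property says precisely that $A$ is abelian. Moreover $A$ has order $2|\mathcal D|$, and $A\cap\{1,z\}=\{1,z\}$, because $\gamma_D$ is a scalar only for $D=\emptyset$. Every element of $A$ has order dividing $4$.

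Next, choose a linear character $\chi\colon A\to\{\pm1,\pm i\}$ with $\chi(z)=-1$. Such a $\chi$ exists because $A$ is a finite abelian group and $z\neq1$, and its values are fourth roots of unity because the exponent of $A$ divides $4$. Form the induced representation $\operatorname{Ind}_A^\Gamma\chi$. Its dimension is
\[
[\Gamma:A]=2^{m+1}/(2|\mathcal D|)=2^{m-\dim\mathcal D}=2^n,
\]
since $S$ generates $\mathbb Z_2^n$. Its monomial basis is indexed by $\Gamma/A\cong\mathbb Z_2^S/\mathcal D\cong\mathbb Z_2^n$, where the coset of $\gamma_D$ corresponds to $\sum_{d\in D}d$.

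In this basis, $P_s:=\rho(\gamma_s)$ sends the basis vector of $x$ to a multiple of the basis vector of $x+s$, with coefficient a value of $\chi$. This gives (a), and $P_s$ is supported exactly on the $s$-edges of $Q_S$; here $0\notin S$ ensures no loops. Since $\rho$ is induced from a unitary character, $P_s$ is unitary. Together with $\rho(\gamma_s)^2=I$ this yields $P_s^*=P_s^{-1}=P_s$, which is (b). Finally, $\rho(z)=\chi(z)I=-I$ gives (c).

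For an explicit local formula, fix a basis $B\subseteq S$ and use the coset representatives $\gamma_{B'}$ with $B'\subseteq B$ in a fixed order. Each $\gamma_s\gamma_{B'}$ rewrites as $z^{\epsilon}\gamma_{B''}\gamma_D$ with $D\in\mathcal D$, and the edge value is $\chi(z^\epsilon\gamma_D)$. On $B$ this reproduces the formula for $\sigma_n$.

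The main obstacle is the bookkeeping in the second and third steps. One must verify that the commutation rule and $\gamma_D^2=z^{q(D)}$ hold for ordered products, that distinct $D$ give distinct non-scalar elements (so that $|A|$ and the index come out right), and that the character values, hence the edge values, lie in $\{\pm1,\pm i\}$ rather than in higher roots of unity. The last point rests on $\gamma_D^2=z^{q(D)}$, which forces every element of $A$ to have order dividing $4$.
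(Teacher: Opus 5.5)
Your proof is correct, and it takes a genuinely different route from the paper's.

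\textbf{The paper's route.} The paper places the standard basis inside $S$ and keeps $\sigma_n$ on $H_n$. It then writes down the extension $\sigma(x,x+s)=(-1)^{f_s(x)}$ or $i(-1)^{f_s(x)}$, where $f_s(x)=|x|+\sum_{\ell<j}s_\ell x_j$ is linear in $x$. It checks well-definedness through $f_s(s)\equiv\binom{|s|+1}{2}$ and verifies the two-label $4$-cycles case by case. The $\Theta$-property enters only for two non-basis labels, via \Cref{lem:Theta-q-linear}(3) applied to $D_s,D_t$. Finally it cancels the $2$-paths in pairs $(a,b),(b,a)$, which is exactly your identity for $M^2$.

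\textbf{Your route.} You identify the $\Theta$-property with commutativity of the preimage $A$ of $\mathcal D$ in the Clifford group, and you obtain the signing as a monomial induced representation. This is basis-free. It also explains why fourth roots of unity suffice and which labels are real: $\gamma_{D_s}^2=z^{q(D_s)}$ with $q(D_s)=\binom{|s|+1}{2}$, which is even exactly when $|s|\equiv0,3\pmod4$, the paper's dichotomy.

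\textbf{How the two relate.} The constructions are compatible. Take representatives $\gamma_{B'}$ in increasing order, with basis elements first. Writing $x$ for the indicator vector of $B'$ (and $E_s,D_s$ as in the paper's proof: $E_s\subseteq B$ sums to $s$, $D_s=E_s\cup\{s\}$), one computes $\gamma_s\gamma_{B'}=z^{f_s(x)+\binom{|s|}{2}}\gamma_{B'\oplus E_s}\gamma_{D_s}$. Hence a suitable choice of $\chi(\gamma_{D_s})\in\{\pm1\}$ or $\{\pm i\}$ reproduces the paper's formula exactly.

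\textbf{What your framework buys.} There are $|A|/2=2^{|S|-n}$ characters with $\chi(z)=-1$, and changing coset representatives acts as switching; this matches the count in \Cref{coro:NumberOfSignatres}. Conversely, take any signing in which every two-label $4$-cycle is negative. Its signed matchings satisfy the defining relations of $\Gamma$ with $z\mapsto-I$, and the products over $D\in\mathcal D$ are diagonal, hence commute. This forces the $\Theta$-property for arbitrary, not necessarily Sidon, $S$; it is the algebraic form of the interchange argument in \Cref{coro:QuadraticObstruction}.

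\textbf{What the paper's route buys.} It gives a closed-form local formula, checked by elementary parity computations, with no group-theoretic existence statements.

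\textbf{One point to make explicit.} Your sentence about $A\cap\{1,z\}$ is garbled; what you need is $z\neq1$ in $\Gamma$, since $\chi(z)=-1$ depends on it. Realize $\Gamma$ as $\mathbb Z_2\times\mathbb Z_2^S$ with $(\epsilon,x)(\delta,y)=(\epsilon+\delta+\beta(x,y),x+y)$, where $\beta(x,y)=\sum_{k>l}x_ky_l$ for a fixed order on $S$. Then $|A|=2|\mathcal D|$ and $[\Gamma:A]=2^n$ follow, because $A$ is the full preimage of $\mathcal D$ under $\Gamma\to\mathbb Z_2^S$.
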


\begin{proof}
	After applying an automorphism of $\mathbb Z_2^n$, we may assume that $S$ contains the standard basis, and we start with the signing $\sigma_n$ on the subgraph $H_n$. We write $x=(x_1,x_2,\dots,x_n)$ and $s=(s_1,s_2,\dots,s_n)$. Moreover, for $s\in S\setminus\{e_1,\ldots,e_n\}$, let $E_s$ be the subset of the standard basis whose elements sum to $s$, and set \(D_s=\{s\}\cup E_s\). Then $D_s$ is a dependency relation of size $|s|+1$.

	Recall that
	$\sigma_n(x,x+e_j)=(-1)^{b_j(x)}$, where $b_j(x)=\sum_{\ell<j}x_\ell$.
	It remains to define $\sigma$ on the edges corresponding to
	\(
	s\in S-\{e_1,\dots,e_n\}.
	\) We give the exact formula for the extension. For $s\in S-\{e_1,\dots,e_n\}$ and $x\in\mathbb Z_2^n$, define
	\(
	f_s(x)
	=
	\sum_{j=1}^n x_j\bigl(1+b_j(s)\bigr)
	\pmod2.
	\)
	Equivalently, by replacing $b_j(s)$ with its defining formula,
	\(
	f_s(x)
	\equiv
	|x|+
	\sum_{\ell<j}s_\ell x_j
	\pmod2.
	\)
We define
	\[
	\sigma(x,x+s)=
	\begin{cases}
		(-1)^{f_s(x)},
		& |s|\equiv0\text{ or }3\pmod4,\\[2mm]
		i(-1)^{f_s(x)},
		& |s|\equiv1\text{ or }2\pmod4.
	\end{cases}
	\]
	
	Our first task is to show that this extension of $\sigma$ is well-defined. Observe that $f_s$ is linear as a function of $x$, i.e., $f_s(x+y)=f_s(x)+f_s(y)$ where all operations are modulo $2$. In particular, we have
	\(
	f_s(x+s)-f_s(x)=f_s(s)\pmod2.
	\)
    Observe that
	\[
	f_s(s)
	\equiv
	\sum_{j=1}^n s_j(1+b_j(s))\equiv
	|s|+\sum_{\ell<j} s_{\ell}s_j\equiv
	|s|+\binom{|s|}{2}= \frac{|s|(|s|+1)}2
	\pmod2.
    \]

	Consequently,
	\(
	f_s(s)\equiv
	\begin{cases}
		0,& |s| \equiv0\text{ or }3\pmod4,\\
		1,& |s| \equiv1\text{ or }2\pmod4.
	\end{cases}
	\)
	\\
	
	If $|s|\equiv0$ or $3\pmod4$, it follows that
	\(
	\sigma(x,x+s)=\sigma(x+s,x)\in\{+1,-1\}.
	\)
	Thus in this case the formula gives a well-defined value on the unoriented edge.
	
	If $|s|\equiv1$ or $2\pmod4$, then
	\(
	\sigma(x+s,x)
	=
	-\sigma(x,x+s)
	=
	\overline{\sigma(x,x+s)}.
	\)
	Thus in this case the edge is treated as an arc and the formula gives a well-defined value in $\{i,-i\}$. 
	
	A reminder that:
	\(
	\sigma(0,s)=
	\begin{cases}
		1,&|s|\equiv0\text{ or }3\pmod4,\\
		i,&|s|\equiv1\text{ or }2\pmod4.
	\end{cases}
	\)\\
	
	We claim that under this signing every two-label $4$-cycle, namely every $4$-cycle whose edge labels alternate between two distinct elements of $S$, has sign $-1$. The two-label $4$-cycles whose two labels are standard basis elements already have sign $-1$ under $\sigma_n$. Thus there are two cases left to consider.
	
	We first consider a $4$-cycle whose edges correspond to $e_j$ and $s$, where
	\(
	s\in S-\{e_1,\dots,e_n\}.
	\)
	Assume its vertices are
	\(
	x, x+e_j, x+e_j+s, x+s, x.
	\)
	For the two edges corresponding to $e_j$, observe that
	\(
	b_j(x+s)-b_j(x)=b_j(s)\pmod2.
	\)
	Thus their contribution to the sign of the $4$-cycle is
	\(
	(-1)^{b_j(s)}.
	\)
	On the other hand,
	\(
	f_s(x+e_j)-f_s(x)
	=
	1+b_j(s)
	\pmod2.
	\)
	Thus, the contribution of the two edges corresponding to $s$ is therefore
	\(
	(-1)^{1+b_j(s)}.
	\)

	In the complex case the two $s$-edges are traversed in opposite directions, so the extra factors $i$ and $-i$ cancel. Hence, the sign of the $4$-cycle is
	\(
	(-1)^{b_j(s)}(-1)^{1+b_j(s)}=-1.
	\)
	
	Finally, consider a $4$-cycle whose edges correspond to two distinct elements
	\(
	s,t\in S-\{e_1,\dots,e_n\},
	\)
with vertices
	\(
	x, x+s, x+s+t, x+t, x.
	\)
	The extra factors $1$ or $i$ in the definition of $\sigma$ cancel between the two edges having the same label, taking the conjugate when an edge is traversed in the reverse direction. Thus the sign of this cycle is
	\(
	(-1)^{f_s(t)+f_t(s)}.
	\)
	It remains to show that
	\(
	f_s(t)+f_t(s)\equiv1\pmod2.
	\) Since $D_s,D_t\in\mathcal D$, \Cref{lem:Theta-q-linear} gives
        \[
        |E_s\cap E_t|=|D_s\cap D_t|
        \equiv |D_s||D_t|
        \equiv (|s|+1)(|t|+1)
        \pmod2.
        \]
        
        From the definition of $f_s$,
        \[
        \begin{aligned}
        f_s(t)+f_t(s)
        &\equiv |s|+|t|
        +\sum_{\ell<j}s_\ell t_j
        +\sum_{\ell<j}t_\ell s_j\\
        &\equiv |s|+|t|+|s||t|-|E_s\cap E_t|\\
        &\equiv 1\pmod2.
        \end{aligned}
        \]

    Hence, every two-label $4$-cycle has sign $-1$.

    It remains to verify that the signing is unitary. Let $M$ be the Hermitian adjacency matrix associated with $\sigma$, and let $x\neq y$. Let $u=x+y\neq0$. A common neighbor of $x$ and $y$ has the form $x+a=y+b$ for some $a,b\in S$ satisfying $a+b=u$. Since $u\neq0$, we have $a\neq b$. Pair the ordered pair $(a,b)$ with $(b,a)$. The corresponding two common neighbors are $x+a$ and $x+b$, and the two length-$2$ walks together form the two-label $4$-cycle
    \(
        x, x+a, x+a+b, x+b, x.
    \)
    Since this cycle has sign $-1$, the two corresponding summands in the $(x,y)$-entry of $M\overline{M}^{\,T}$ cancel. Since $M$ is Hermitian, this says $(M^2)_{xy}=0$ for all $x\neq y$. On the diagonal, every row has exactly \(|S|\) nonzero entries. Since \(M\) is Hermitian, \(M_{yx}=\overline{M}_{xy}\), and hence $ M_{xy}M_{yx}=M_{xy}\overline{M}_{xy}=1 $     
    for every edge \(xy\). Therefore, \((M^2)_{xx}=|S|\).
    Hence,
    $M^2=|S|I$, which means $\sigma$ is a unitary signing of $Q_S$. 
\end{proof}

\begin{remark}
When $S$ is not Sidon, $Q_S$ may also contain $4$-cycles with four distinct edge labels $a,b,c,d$ satisfying $a+b+c+d=0$. The preceding construction does not prescribe the signs of these cycles, nor is such a prescription needed for the signing to be unitary. The paths of length two between two vertices $x$ and $y$ can be paired up with edge labels $(a,b)$ and $(b,a)$, so that their contributions cancel in the corresponding matrix entry.
\end{remark}

\begin{theorem}\label{thm:SidonCharacterization}
Let $S\subseteq\mathbb Z_2^n$ be a Sidon set generating $\mathbb Z_2^n$. Then $Q_S$ admits a unitary signing if and only if $S$ has the $\Theta$-property. It admits an orthogonal signing if and only if, furthermore, $S$ has no minimal dependency relation whose size is congruent to $2$ or $3$ modulo $4$.
\end{theorem}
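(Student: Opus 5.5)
The plan is to assemble the characterization from results already established, treating the two parts separately.

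\emph{Unitary part.} Sufficiency is immediate from \Cref{thm:ThetaSufficient}. The hypotheses there are that $S$ generates $\mathbb Z_2^n$ and $0\notin S$; the second holds because $Q_S$ is loopless in the graph-theoretic setting.

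For necessity, suppose $Q_S$ admits a unitary signing $\sigma$ and, for contradiction, that $S$ fails the $\Theta$-property. Then there are pairwise disjoint $A,B,C\subseteq S$ with equal sums, at least two of them of odd size. The discussion preceding \Cref{thm:ForbiddenTheta} handles exactly this situation:
\begin{itemize}
\item after switching, the restriction to a basis-generated $H_n$ is $\sigma_n$, using Sidonness and \Cref{prop:A2-4Cycles};
\item the three walks $P_A,P_B,P_C$ from $0$ to $u$ yield $S$-simple closed walks whose signs multiply to $1$;
\item \Cref{lem:S-simple-Real} then forces at least two of $|A|,|B|,|C|$ to be even.
\end{itemize}
This is a contradiction. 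The only point to check is the degenerate case where one of the sets is empty, and the text explicitly covers it.

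\emph{Orthogonal part.} By \Cref{thm:Unitary->Orthogonal}, once a unitary signing exists, an orthogonal signing exists if and only if there is no $S$-simple closed walk of length $\equiv 2,3\pmod 4$. So the task reduces to showing that, under the $\Theta$-property, this condition is equivalent to the absence of minimal dependency relations of size $\equiv 2,3\pmod 4$.

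An $S$-simple closed walk of length $k$ uses $k$ distinct labels summing to $0$, so it is a dependency relation $D$ with $|D|=k$. Conversely, every $D\in\mathcal D$ is realized as an $S$-simple closed walk by ordering its elements. Note that $|D|\equiv 2,3\pmod 4$ exactly when $q(D)=\binom{|D|}{2}\equiv 1\pmod 2$. The condition therefore becomes: $q$ vanishes on $\mathcal D$.

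One direction is trivial, since minimal relations lie in $\mathcal D$. For the other, I will use \Cref{lem:Theta-q-linear}: under the $\Theta$-property $q$ is linear on $\mathcal D$. Every element of $\mathcal D$ is a symmetric difference of minimal dependency relations, for instance of the fundamental circuits $D_s$ relative to a basis, which are minimal and span $\mathcal D$. Hence if $q$ vanishes on the minimal relations, it vanishes on all of $\mathcal D$.

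The main care point is this last step. Vanishing of $q$ on minimal relations does not by itself imply vanishing on $\mathcal D$ without linearity. This is why the orthogonality criterion is stated ``furthermore'', that is, under the $\Theta$-property already secured by the first part.
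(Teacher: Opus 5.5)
Your proposal is correct and follows the paper's proof. Necessity of the $\Theta$-property comes from the three-walk argument preceding \Cref{thm:ForbiddenTheta}, sufficiency from \Cref{thm:ThetaSufficient}, and the orthogonal clause from \Cref{thm:Unitary->Orthogonal}, with your explicit translation from $S$-simple closed walks to minimal dependency relations filling in a step the paper leaves implicit. That translation uses linearity of $q$ on $\mathcal D$ from \Cref{lem:Theta-q-linear} and the fact that the fundamental circuits $D_s$ span $\mathcal D$ (the step could also be read off from the proof of \Cref{thm:Unitary->Orthogonal}, whose converse only uses the fundamental walks $C_s$).
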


\begin{proof}
Necessity of the $\Theta$-property for Sidon sets was proved above, while sufficiency follows from \Cref{thm:ThetaSufficient}. The statement about orthogonal signings is a direct consequence of \Cref{thm:Unitary->Orthogonal}.
\end{proof}

\begin{corollary}\label{coro:QuadraticObstruction}
	Let $S\subseteq\mathbb Z_2^n$ be a Sidon set generating
	$\mathbb Z_2^n$. If $Q_S$ admits no unitary signing, then it contains
	a subgraph $H$ on at most
	\(
	(n+1)^2+2n+3
	\)
	vertices for which there is no complex signing in which every
	two-label $4$-cycle of $H$ has sign $-1$.
\end{corollary}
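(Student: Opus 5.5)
By \Cref{thm:SidonCharacterization}, if $Q_S$ admits no unitary signing then $S$ fails the $\Theta$-property. By \Cref{lem:Theta-q-linear} there are then two dependency relations $D_1,D_2\in\mathcal D$ violating the parity identity. Set $A=D_1\cap D_2$, $B=D_1\setminus D_2$ and $C=D_2\setminus D_1$. These are three pairwise disjoint subsets of $S$ with a common sum $u$, and at least two of them have odd size.

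The plan is to build $H$ as a small union of walks and their ``ladders'', and to rerun the necessity argument preceding \Cref{thm:ForbiddenTheta} inside $H$. That argument uses only two-label $4$-cycles, so it works in any subgraph that contains the relevant ones. The first step is to reduce sizes so that $|A|+|B|+|C|\le n+2$ or so.

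\begin{enumerate}
\item We choose $D_1,D_2$ so that each of $A,B,C$ is small. Working modulo a basis $e_1,\dots,e_n\subseteq S$, the sets $B\cup A$ and $A\cup C$ can be taken as minimal circuits $D_s=\{s\}\cup E_s$ or as sums of two such circuits. Concretely, failure of linearity of $q$ on $\mathcal D$ must already be witnessed on a pair of basis elements $D_s,D_t$ of $\mathcal D$, or on a single $D_s$ compared with $q(\emptyset)$.
\item Each $D_s$ has size at most $n+1$. Hence the three walks $P_A,P_B,P_C$ from $0$ to $u$ have total length at most about $2(n+1)$, and they use distinct labels.
\end{enumerate}

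Next we apply \Cref{lem:S-simple-Real} to the three $S$-simple closed walks $P_A\mycev{P_B}$, $P_B\mycev{P_C}$ and $P_C\mycev{P_A}$. That lemma reverses a closed walk $W$ by taking symmetric differences with $\binom{|W|}{2}$ two-label $4$-cycles, as in \Cref{lem:AntipodalCycle}. Those $4$-cycles live on the ``staircase'' vertices obtained by summing prefixes of the label sequence with one transposition applied. For a walk of length $m$ this uses about $\binom{m+1}{2}+1$ vertices (cf.\ the remark after \Cref{lem:AntipodalCycle}). The subgraph $H$ is then the union of:
\begin{enumerate}
\item the three walks;
\item the staircase vertices needed for the three reversal computations;
\item the $2$-paths completing each used $4$-cycle.
\end{enumerate}
In any complex signing of $H$ in which every two-label $4$-cycle has sign $-1$, the computation of \Cref{lem:S-simple-Real} goes through verbatim, so each of the three closed walks is real or imaginary according to its length modulo $4$. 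The product relation $\sigma(C_{k\ell})\sigma(C_{\ell r})\sigma(C_{rk})=1$ then yields the same contradiction as before. Note that the Sidon hypothesis is only used to guarantee that the $\sigma_n$-normalization is available. Inside $H$, we instead directly reduce to the statement on $4$-cycles, via the switching argument of \Cref{thm:SwitchingEquivalence} on a spanning tree of $H$.

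The main obstacle is the vertex count. A naive staircase for a closed walk of length $m\le 2n+2$ costs about $m^2/2\approx 2n^2$ vertices, which exceeds the target $(n+1)^2+2n+3$. We need to share vertices across the three reversals. I would first reverse only the two odd walks relative to a common base path, so that the staircases overlap on the walk $P_A$ (or on the shared $e$-part). Alternatively, we can use a single staircase grid indexed by prefix sums of one ordering of $A\cup B\cup C$. This grid is a set of the form $\{\text{prefix}+\text{prefix}'\}$ of size about $(n+1)^2$, and the extra $2n+3$ vertices would account for the walk endpoints and the $s,t$ edges. Verifying that all needed $4$-cycles lie in this grid is the step I expect to require care.
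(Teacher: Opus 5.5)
\textbf{Genuine gap: the vertex bound is never established.}

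The argument you run inside $H$ is sound. Reversing an $S$-simple closed walk of length $m$ by $\binom m2$ adjacent interchanges uses only the signs of two-label $4$-cycles. So in any signing of $H$ that makes them all $-1$, each of $P_A\overleftarrow{P_B}$, $P_B\overleftarrow{P_C}$, $P_C\overleftarrow{P_A}$ is real exactly when $\binom m2$ is even. The product relation then contradicts $\binom{|A|+|B|}{2}+\binom{|B|+|C|}{2}+\binom{|C|+|A|}{2}\equiv|A||B|+|B||C|+|C||A|\equiv1\pmod 2$.

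The problem is that the corollary is really a statement about the size of $H$, and you never prove that bound. You assert that the naive count overshoots and leave the ``shared grid'' unverified. That assertion rests on a wrong length estimate. The form $(D,D')\mapsto|D\cap D'|+|D||D'|$ is bilinear and vanishes on the diagonal, so the failure is witnessed by two \emph{distinct} fundamental circuits $D_s,D_t$; your alternative involving $q(\emptyset)$ never occurs. Then $A=E_s\cap E_t$, $B=\{s\}\cup(E_s\setminus E_t)$ and $C=\{t\}\cup(E_t\setminus E_s)$. Hence $T:=|A|+|B|+|C|=|E_s\cup E_t|+2\le n+2$. No closed walk has length near $2n+2$: the three lengths $m_1,m_2,m_3$ sum to $2T\le 2n+4$.

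\textbf{Closing the gap along your own route.} Reverse labels $a_1,\dots,a_m$ by moving $a_m$, then $a_{m-1}$, and so on, to the front. The last interchange of the stage that brings $a_j$ forward lands on $a_j+\cdots+a_m$, which is already a vertex of the original walk. So at most $\binom{m-1}{2}$ new vertices appear per reversal. Since $\sum m_i^2=T^2+|A|^2+|B|^2+|C|^2\le 2T^2$, you get $|V(H)|\le\sum_i\bigl(m_i+\binom{m_i-1}{2}\bigr)\le T^2-T+3\le n^2+3n+5\le(n+1)^2+2n+3$.

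\textbf{The paper's route.} The paper avoids the three reversals altogether. It takes closed walks $W_s,W_t$ at $0$ with label sets $D_s,D_t$ and moves every label of $W_s$ past every label of $W_t$. There are $|D_s||D_t|-|D_s\cap D_t|$ interchanges of distinct labels, which is an odd number, and each adds at most one vertex. This contradicts $\sigma(W_sW_t)=\sigma(W_s)\sigma(W_t)=\sigma(W_tW_s)$. All intermediate vertices have the form $p_i+q_j$, where $p_i,q_j$ are prefix sums of $W_s,W_t$. This grid has at most $(|D_s|+1)(|D_t|+1)\le(n+2)^2=(n+1)^2+2n+3$ points. It is the grid you were reaching for, but indexed by two walks rather than by one ordering of $A\cup B\cup C$.
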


\begin{proof}
	After applying an automorphism of $\mathbb Z_2^n$, we may assume that
	$S$ contains the standard basis
	\(
	B=\{e_1,\ldots,e_n\}.
	\)
	For each $s\in S\setminus B$, let
	\(
	D_s=\{s\}\cup E_s,
	\)
	where $E_s\subseteq B$ is the unique subset whose elements sum to $s$.
	The sets $D_s$, $s\in S\setminus B$, form a basis of the dependency
	space $\mathcal D$ of $S$.
	
	Since $Q_S$ admits no unitary signing, \Cref{thm:SidonCharacterization}
	implies that $S$ does not have the $\Theta$-property. By
	\Cref{lem:Theta-q-linear}, there are two fundamental
	dependencies $D_s$ and $D_t$ such that
	\[
	|D_s\cap D_t|+|D_s||D_t|\equiv1\pmod2.
	\]
	
	Choose closed walks $W_s$ and $W_t$, based at $0$, whose edge-label
	sequences list the elements of $D_s$ and $D_t$, respectively, with
	each element occurring once. Such walks are closed because $D_s$ and
	$D_t$ are dependency relations. Their concatenations
	$W_sW_t$ and $W_tW_s$ are therefore also closed walks based at $0$.
	
	Starting with the edge-label sequence of $W_sW_t$, move the labels
	of $W_s$ successively past the labels of $W_t$ by adjacent
	interchanges until the edge-label sequence of $W_tW_s$ is obtained.
	Consider one such interchange. If the two labels are equal, nothing
	changes. If the labels are distinct, say $a$ and $b$, then a segment
	$x, x+a, x+a+b$
	of the current walk is replaced by
	$x, x+b, x+a+b$.
	The union of these two length-$2$ paths is a two-label $4$-cycle, with
	cyclic edge-label sequence $a,b,a,b$.
	
	Let $H$ be the union of the initial closed walk $W_sW_t$ and all the
	two-label $4$-cycles arising from interchanges of distinct labels.
	Since
	\(
	|D_s|,|D_t|\leq n+1,
	\)
	the initial closed walk has length at most $2n+2$, and hence uses at
	most $2n+3$ vertices. Each interchange of two distinct labels adds at
	most one new vertex to the union already constructed.
	
	Every label of $W_s$ is moved past every label of $W_t$. There are
	\(
	|D_s||D_t|-|D_s\cap D_t|
	\)
	interchanges involving distinct labels, since a label common to
	$D_s$ and $D_t$ gives one interchange of equal labels. Consequently,
	\[
	|V(H)|
	\leq
	2n+3+|D_s||D_t|-|D_s\cap D_t|
	\leq
	2n+3+(n+1)^2.
	\]
	
	Suppose now that $H$ admits a complex signing in which every
	two-label $4$-cycle has sign $-1$. Consider again an interchange of
	two distinct adjacent labels $a$ and $b$. The two corresponding
	length-$2$ paths have the same endpoints and together form a
	two-label $4$-cycle of sign $-1$. It follows that their signs differ
	by a factor of $-1$. Hence, each such interchange changes the sign of
	the corresponding closed walk by a factor of $-1$.
	
	The number of interchanges of distinct labels has parity
	\[
	|D_s||D_t|-|D_s\cap D_t|
	\equiv
	|D_s||D_t|+|D_s\cap D_t|
	\equiv 1 \pmod2.
	\]
	Therefore $W_sW_t$ and $W_tW_s$ must have opposite signs:
	\(
	\sigma(W_sW_t)=-\sigma(W_tW_s).
	\)
	
	On the other hand, $W_s$ and $W_t$ are closed walks based at $0$. Thus
	\(
	\sigma(W_sW_t)
	=\sigma(W_s)\sigma(W_t)
	=\sigma(W_t)\sigma(W_s)
	=\sigma(W_tW_s),
	\)
	a contradiction. Hence, no complex signing of $H$ can make every
	two-label $4$-cycle negative.
\end{proof}

Let
$B=\{e_1,\ldots,e_n\},$
$u_j=e_1+\cdots+e_j$ for $2\leq j\leq n$, 
$U=\{u_2,\ldots,u_n\}$.
Theorem~1.2 of Alon and Zheng \cite{AZ20} assumes $S\subseteq B\cup U$ and $|S\cap U|\leq1$. Their Corollary~3.3 also treats certain cases with two elements of $U$. From the viewpoint of the $\Theta$-property, the situation for sets containing all of $B$ is particularly simple.

\begin{lemma}
	Let $U'\subseteq U$ and set $S=B\cup U'$. Then $S$ has the $\Theta$-property if and only if either $|U'|\leq1$, or
	\(
	U'=\{u_j,u_k\}, j<k,
	\)
	with $j$ even and $k$ odd. In particular, no set $B\cup U'$ with $|U'|\geq3$ has the $\Theta$-property.
\end{lemma}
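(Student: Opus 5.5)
Since $B\subseteq S$ is a basis, the dependency space $\mathcal D$ of $S=B\cup U'$ has as a basis the fundamental dependencies
\[
D_j=\{u_j,e_1,\ldots,e_j\},\qquad u_j\in U',
\]
and $|D_j|=j+1$. By \Cref{lem:Theta-q-linear}, the $\Theta$-property is equivalent to linearity of $q$ on $\mathcal D$. Because $q(D_1\oplus D_2)+q(D_1)+q(D_2)\equiv |D_1||D_2|+|D_1\cap D_2|$ is bilinear in $D_1,D_2$, linearity holds on all of $\mathcal D$ as soon as condition (3) holds for all pairs of basis elements, including each basis element paired with itself. For a single element we have $|D|\equiv|D|^2$, so the diagonal case is automatic. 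Hence $S$ has the $\Theta$-property if and only if
\[
|D_j\cap D_k|\equiv |D_j||D_k|\pmod 2 \qquad\text{for all distinct } u_j,u_k\in U'.
\]
I should make this reduction to a basis explicit, by expanding $D=\sum D_{j}$ and using bilinearity of the defect form $|D_1\cap D_2|+|D_1||D_2|$.

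Now I compute for $j<k$. The intersection is $D_j\cap D_k=\{e_1,\ldots,e_j\}$, since $u_j\neq u_k$ and $u_j$ is not a basis vector when $j\ge2$. So $|D_j\cap D_k|=j$, while $|D_j||D_k|=(j+1)(k+1)$. The required congruence is therefore $j\equiv (j+1)(k+1)\pmod 2$. If $j$ is even, this reads $0\equiv k+1$, so $k$ must be odd. If $j$ is odd, the right side is $0$ but the left side is $1$, so the condition fails. Hence a pair $u_j,u_k$ with $j<k$ is compatible exactly when $j$ is even and $k$ is odd.

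This gives the first claim at once. If $|U'|\le1$, there are no pairs to check. If $U'=\{u_j,u_k\}$, the single pair condition is precisely "$j$ even, $k$ odd". For $|U'|\ge3$, take indices $j<k<l$. The pair $(j,k)$ forces $k$ to be odd, but then the pair $(k,l)$ fails, because its smaller index $k$ is odd. So no such $U'$ has the $\Theta$-property.

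The only delicate step is the reduction from all of $\mathcal D$ to pairs of basis elements, together with the check that $u_j\notin D_k$. Both are short once the bilinearity identity from the proof of \Cref{lem:Theta-q-linear} is invoked.
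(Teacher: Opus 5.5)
Your proposal is correct and follows the paper's proof essentially step for step. Both use the fundamental dependencies $D_j=\{u_j,e_1,\ldots,e_j\}$, compare $|D_j\cap D_k|=j$ with $(j+1)(k+1)$ to get the pattern ``$j$ even, $k$ odd,'' and rule out three elements by the conflicting demands on the middle index; the only difference is that you spell out the bilinearity reduction to pairs of basis dependencies (noting the diagonal case is automatic), which the paper leaves implicit here but states explicitly in the proof of \Cref{thm:Theta-extremal}.
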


\begin{proof}
	For $u_k\in U'$, the corresponding fundamental dependency is
	\[
	D_k=\{u_k,e_1,\ldots,e_k\},
	\qquad |D_k|=k+1.
	\]
	
	For $j<k$,
	\(
	|D_j\cap D_k|=j.
	\)	
	By \Cref{lem:Theta-q-linear}, the pair $D_j,D_k$ is compatible with the $\Theta$-property precisely when
	\(
	j\equiv(j+1)(k+1)\pmod2,
	\)
	which holds exactly when $j$ is even and $k$ is odd. Thus two elements $u_j,u_k$ with $j<k$ are allowed exactly in this parity pattern. Three elements $u_j,u_k,u_l$ with $j<k<l$ are impossible, since the pair $(j,k)$ would force $k$ to be odd while the pair $(k,l)$ would force $k$ to be even.
\end{proof}

\section{Extremal sets with the $\Theta$-property}

The connection that we have established between unitary signing and $\Theta$-property raises two questions: 1. What is the largest generating subset of $\mathbb Z_2^n$ with the $\Theta$-property? 2. What is the largest generating Sidon subset of $\mathbb Z_2^n$ with the $\Theta$-property?

Though we show that the tight general upper bound is $2n+1$, the distinction is interesting: without the restriction on the Sidon sets the upper bound of $2n+1$ works for almost all values of $n$ and constructions are simple. When restricted to Sidon sets, for $3\leq n\leq 9$ we get different answers. For $n\geq 10$, the upper bound of $2n+1$ is tight again, but we get a strong connection between extremal examples and binary codes.\\

Let
\(
M_\Theta(n)=\max\bigl\{|S|:S\subseteq\mathbb Z_2^n\text{ generates }\mathbb Z_2^n\text{ and has the }\Theta\text{-property}\bigr\}.
\)\\

Since an invertible linear transformation preserves both the generation and the $\Theta$-property, in studying $M_\Theta(n)$ we may equivalently require that $S$ contains the standard basis.

\subsection{The dependency space and a universal upper bound}

We continue to use the dependency space $\mathcal D$ introduced in \Cref{lem:Theta-q-linear}. The parity characterization in \Cref{lem:Theta-q-linear} immediately yields a universal upper bound.

\begin{theorem}\label{thm:upper}
	For every $n\geq 1$,
	\(
	M_\Theta(n)\leq 2n+1.
	\)
\end{theorem}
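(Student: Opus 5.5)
We bound the dimension of the dependency space using the bilinear identity of \Cref{lem:Theta-q-linear}. Let $S\subseteq\mathbb Z_2^n$ generate $\mathbb Z_2^n$ and have the $\Theta$-property, and set $m=|S|$. The linear map $\mathbb Z_2^S\to\mathbb Z_2^n$, $x\mapsto\sum_{s}x_s s$, is surjective. Its kernel is exactly $\mathcal D$, viewed as a subspace of $\mathbb Z_2^m$ through incidence vectors. Hence $\dim\mathcal D=m-n$, and it suffices to show $\dim\mathcal D\le n+1$.

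The key step is to recast condition (3) of \Cref{lem:Theta-q-linear} as an orthogonality statement. On $\mathbb Z_2^m$, take the standard form $\langle x,y\rangle=\sum_k x_ky_k$, so that $\langle x_{D_1},x_{D_2}\rangle\equiv|D_1\cap D_2|$. Let $w$ be the all-ones vector, so that $\langle w,x_D\rangle\equiv|D|$. Condition (3) then says that for all $D_1,D_2\in\mathcal D$,
\[
\langle x_{D_1},x_{D_2}\rangle=\langle w,x_{D_1}\rangle\langle w,x_{D_2}\rangle .
\]
Let $\mathcal D_0=\{D\in\mathcal D:|D|\text{ even}\}$. This is the kernel of the linear functional $D\mapsto|D|\bmod 2$, so it has codimension at most $1$ in $\mathcal D$. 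For $D_1,D_2\in\mathcal D_0$ the right-hand side vanishes, so $\mathcal D_0$ is a totally isotropic (self-orthogonal) subspace of $\mathbb Z_2^m$. The standard form is nondegenerate, so $\dim\mathcal D_0\le m/2$.

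This gives $m-n=\dim\mathcal D\le \dim\mathcal D_0+1\le m/2+1$, hence $m\le 2n+2$. The main obstacle is gaining the final unit, to reach $2n+1$. I would obtain it by a case split.

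\emph{Case $\mathcal D_0=\mathcal D$.} Equality would force $\dim\mathcal D_0=m/2$ and $m-n=m/2$, i.e.\ $m=2n$. So in this case the bound is already $m\le 2n$.

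\emph{Case $\mathcal D$ contains an odd $D^*$.} By (3), $D^*$ has odd intersection with every odd member of $\mathcal D$, and in particular $\langle x_{D^*},x_{D^*}\rangle=1$. Every member of $\mathcal D_0$ is orthogonal both to $w$ and to $x_{D^*}$. I would then use that $w\perp \mathcal D_0$ and $w\notin\mathcal D_0$ (since $m$ odd or $w\notin \mathcal D$ is to be checked), so that $\mathcal D_0+\langle w\rangle$ is still totally isotropic when $m$ is even. Concretely, if $m$ is even then $\langle w,w\rangle=0$, and $w$ is orthogonal to all of $\mathcal D_0$ since even sets have even intersection with $S$. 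If moreover $w\notin\mathcal D_0$, then $\mathcal D_0\oplus\langle w\rangle$ is isotropic of dimension $\dim\mathcal D_0+1\le m/2$, which gives $\dim\mathcal D_0\le m/2-1$ and therefore $m\le 2n$. If $w\in\mathcal D_0$, the odd $D^*$ satisfies $\langle x_{D^*},w\rangle=1$ while (3) applied to $D^*$ and $S$ gives $|D^*\cap S|\equiv|D^*||S|\equiv 0$, a contradiction. Finally, if $m$ is odd, then $\dim\mathcal D_0\le\lfloor m/2\rfloor=(m-1)/2$, and $m-n\le (m+1)/2$ gives $m\le 2n+1$.

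Combining the two cases yields $M_\Theta(n)\le 2n+1$ in all cases. The delicate point is the even-$m$ case, specifically checking that $w$ can be adjoined to $\mathcal D_0$ without destroying isotropy.
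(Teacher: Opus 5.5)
Your proof is correct, but it reaches the bound by a different route than the paper. The paper appends the parity bit $|D|\bmod 2$ to each incidence vector. This turns condition (3) of \Cref{lem:Theta-q-linear} into plain self-orthogonality of the whole lifted space $\widehat{\mathcal D}\subseteq\F_2^{|S|+1}$, and the single inequality $|S|-n\le(|S|+1)/2$ finishes with no cases.

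You instead stay in $\F_2^{|S|}$, where only the even part $\mathcal D_0$ is isotropic. You pay one dimension for the odd relations and then win it back. For $|S|$ odd this comes from the floor of $|S|/2$. For $|S|$ even it comes from adjoining the all-ones vector $w$, which is isotropic and orthogonal to $\mathcal D_0$. Your argument excluding $w\in\mathcal D_0$, by applying (3) to $D^*$ and $S$, is correct.

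Your split along the parity of $|S|$ has a structural explanation. The paper's lift amounts to using the form $\langle x,y\rangle+\langle w,x\rangle\langle w,y\rangle$ on $\F_2^{|S|}$. Its Gram matrix $I+J$ has determinant $|S|+1$, so it is nondegenerate exactly when $|S|$ is even.

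Each approach buys something different. The paper's version is shorter and sets up the equality case directly: $|S|=2n+1$ forces $\widehat{\mathcal D}$ to be self-dual, which is what \Cref{prop:puncture} exploits. Yours makes two refinements explicit: $|S|\le 2n$ whenever $|S|$ is even, and $|S|\le 2n$ whenever every dependency is even (the latter is \Cref{rem:even-dependencies}).

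Two small cleanups. The remarks that $x_{D^*}$ is orthogonal to $\mathcal D_0$ and that $\langle x_{D^*},x_{D^*}\rangle=1$ are never used and can be dropped. The hedged parenthetical ``since $m$ odd or $w\notin\mathcal D$ is to be checked'' should be replaced by the concrete subcase argument you give immediately afterwards.
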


\begin{proof}
Since $S$ generates $\F_2^n$, we have $\dim\mathcal D=|S|-n$. For $D\in\mathcal D$, let $x_D\in\F_2^{|S|}$ be its incidence vector and define
\(
\widehat D=(x_D,|D|\bmod2)\in\F_2^{|S|+1}.
\)
Applying \Cref{lem:Theta-q-linear} to $D_1,D_2\in\mathcal D$, we have
\(
\widehat D_1\cdot\widehat D_2
\equiv |D_1\cap D_2|+|D_1||D_2|
\equiv0\pmod2.
\)
Thus $\widehat{\mathcal D}=\{\widehat D:D\in\mathcal D\}$ is self-orthogonal. The map $D\mapsto\widehat D$ is injective and linear, so
\(
|S|-n=\dim\widehat{\mathcal D}\leq\frac{|S|+1}{2}.
\)
Hence, $|S|\leq2n+1$.
\end{proof}

\begin{remark}\label{rem:even-dependencies}
If $S$ has the $\Theta$-property and every dependency relation of $S$ has even size, then the same argument can be applied directly to the incidence vectors $x_D$ (without the additional coordinate). In that case $\mathcal D$ itself is self-orthogonal in $\F_2^{|S|}$, and therefore $|S|\leq2n$.
\end{remark}

The bound in \Cref{thm:upper} is sharp in every dimension $n\geq3$. If one additionally requires $0\notin S$, then the only exceptional dimension is $n=4$, where the maximum is $8$ rather than $9$.

\begin{theorem}\label{thm:Theta-extremal}
	For every $n\geq3$,
	$M_\Theta(n)=2n+1.$
		More precisely, let
	$J=e_1+\cdots+e_n.$
	If $n$ is odd, then
	$S=\{e_1,\ldots,e_n\}\cup\{J+e_1,\ldots,J+e_n\}\cup\{J\}$ has the $\Theta$-property. If $n\geq6$ is even, then
	$S=\{e_1,\ldots,e_n\}\cup\{e_1+e_2,\ e_1+e_3,\ e_2+e_3,\ e_4+\cdots+e_n\}\cup\{J+e_k:4\leq k\leq n\}$ has the $\Theta$-property. In both cases, $0\notin S$ and $|S|=2n+1$.
	
	For $n=4$, the set
	$S=\{e_1,e_2,e_3,e_4\}\cup\{J+e_1,J+e_2,J+e_3,J+e_4\}\cup\{0\}$
	has the $\Theta$-property and has size $9$.
\end{theorem}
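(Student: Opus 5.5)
The plan is to prove the equality $M_\Theta(n)=2n+1$ in two halves. The upper bound $M_\Theta(n)\leq 2n+1$ is exactly \Cref{thm:upper}. For the lower bound it suffices to verify that each displayed set generates $\F_2^n$, has the stated size, and has the $\Theta$-property. Generation is immediate, since each set contains the standard basis $B$. For the size and for $0\notin S$, I will check that the listed vectors are distinct and nonzero using Hamming weights. For odd $n$, the vectors $J+e_k$ have weight $n-1\geq 2$ and $J$ has weight $n$, so they are distinct from each other and from $B$. For even $n\geq6$, the vectors $J+e_k$ have weight $n-1\geq5$, and $J+e_k\neq e_4+\cdots+e_n=J+e_1+e_2+e_3$. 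The counts are then $n+n+1$ and $n+4+(n-3)$, both equal to $2n+1$.

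For the $\Theta$-property I will use criterion (3) of \Cref{lem:Theta-q-linear}. The key reduction is that
\[
\beta(D_1,D_2)=|D_1\cap D_2|+|D_1||D_2| \pmod 2
\]
is a symmetric bilinear form on $\mathcal D$: intersection parity is the standard dot product of incidence vectors, and $D\mapsto|D| \bmod 2$ is linear. So $\beta\equiv0$ on $\mathcal D$ as soon as it vanishes on all pairs from a basis. The diagonal terms vanish automatically, because $|D|+|D|^2\equiv0$. As a basis I take the fundamental dependencies $D_s=\{s\}\cup E_s$ for $s\in S\setminus B$, where $E_s$ is the support of $s$. For distinct $s,t\in S\setminus B$ we have $D_s\cap D_t=E_s\cap E_t$, so the whole verification reduces to the parity check
\[
|E_s\cap E_t|\equiv(|s|+1)(|t|+1)\pmod 2 .
\]
I expect this reduction (bilinearity, and hence sufficiency of checking a basis) to be the only step needing care; the rest is bookkeeping.

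Odd $n$. For $s=J+e_k$ we have $|D_s|=n$, which is odd. For $J$ we have $|D_J|=n+1$, which is even.
\begin{itemize}
\item Two elements $J+e_k,J+e_l$ with $k\neq l$: the intersection has $n-2$ elements, an odd number, and the product $n\cdot n$ is odd.
\item $J+e_k$ and $J$: the intersection has $n-1$ elements, an even number, and the product $n(n+1)$ is even.
\end{itemize}

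Even $n\geq6$.
\begin{itemize}
\item The three elements $e_a+e_b$ with $a,b\in\{1,2,3\}$ have $|D|=3$, which is odd. Any two of them meet in one coordinate, and $3\cdot3$ is odd.
\item The element $e_4+\cdots+e_n$ has $|D|=n-2$, which is even, and its support is disjoint from $\{1,2,3\}$. Against each $e_a+e_b$ the intersection is $0$ and the product $3(n-2)$ is even.
\item Each $J+e_k$ with $k\geq4$ has $|D|=n$, which is even, so every product involving it is even. The intersection parities are then:
\begin{itemize}
\item with another $J+e_l$: $n-2$ elements, even;
\item with $e_a+e_b$: $2$ elements, even;
\item with $e_4+\cdots+e_n$: $n-4$ elements, even.
\end{itemize}
\end{itemize}

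$n=4$. The element $0$ gives $D_0=\{0\}$ with $E_0=\varnothing$, so its intersection with every other $D_s$ is empty. For $s=J+e_k$ we have $|D_s|=4$, which is even. The products with $|D_0|=1$ are therefore $4$, which is even, matching the empty intersection. Two elements $J+e_k,J+e_l$ meet in $2$ coordinates, an even number, and the product $4\cdot 4$ is even. The size is $4+4+1=9=2n+1$.

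In all three cases every basis pair satisfies criterion (3). By bilinearity, $\beta\equiv0$ on all of $\mathcal D$, and \Cref{lem:Theta-q-linear} gives the $\Theta$-property.
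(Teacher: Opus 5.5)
Your proposal is correct and follows the paper's proof: you take the upper bound from \Cref{thm:upper}, reduce via the bilinearity of $(D,D')\mapsto|D\cap D'|+|D||D'| \pmod 2$ to the parity check $|E_s\cap E_t|\equiv(|s|+1)(|t|+1)\pmod 2$ on pairs of fundamental dependencies, and then verify the three families case by case. Your intersection counts and size parities agree with the paper's in every case.
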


\begin{proof}
	All the displayed sets contain the standard basis and hence generate
	$\F_2^n$. Their elements are pairwise distinct in the stated ranges,
	so in each case $|S|=2n+1$.
	
	We use \Cref{lem:Theta-q-linear}. Let
	$S=\{e_1,\ldots,e_n\}\cup T.$
	For $t\in T$, let $E_t$ be the subset of the standard basis whose
	elements sum to $t$, and let
	$D_t=\{t\}\cup E_t.$
	The sets $D_t$, $t\in T$, form a basis of the dependency space
	$\mathcal D$. Since
	\[
	(D,D')
	\longmapsto
	|D\cap D'|+|D||D'|
	\pmod2
	\]
	is bilinear on $\mathcal D$, it is enough to verify that for distinct
	$t,u\in T$,
	\begin{equation}\label{eq:Theta-normalized}
		|E_t\cap E_u|
		\equiv
		(1+|t|)(1+|u|)
		\pmod2.
	\end{equation}
	
	Suppose first that $n$ is odd. Let
		$f_k=J+e_k.$
	Then
	$T=\{f_1,\ldots,f_n,J\}.$
	Each $f_k$ has even weight $n-1$, whereas $J$ has odd weight $n$.
	For $k\neq l$,
	\[
	|E_{f_k}\cap E_{f_l}|=n-2\equiv1\pmod2,
	\text{ while }
	|E_{f_k}\cap E_J|=n-1\equiv0\pmod2.
	\]
	These are exactly the parities required by
	\eqref{eq:Theta-normalized}. Hence, $S$ has the $\Theta$-property.
	
	Now suppose that $n\geq6$ is even. Let
	$a_{12}=e_1+e_2,
	a_{13}=e_1+e_3,
	a_{23}=e_2+e_3,$
	$h=e_4+\cdots+e_n,
	f_k=J+e_k$, where $4\leq k\leq n.$
	Then
	$T=
	\{a_{12},a_{13},a_{23},h\}
	\cup
	\{f_k:4\leq k\leq n\}.$
	The vectors $a_{12},a_{13},a_{23}$ have even weight, while
	$|h|=n-3$ and $|f_k|=n-1$
	are odd. Thus, the right-hand side of
	\eqref{eq:Theta-normalized} is odd precisely when both elements belong
	to $\{a_{12},a_{13},a_{23}\}.$
	Any two distinct vectors among $a_{12},a_{13},a_{23}$ satisfy
	$|E_{a_{rs}}\cap E_{a_{pq}}|=1.$
	On the other hand,
	$|E_{a_{rs}}\cap E_h|=0,
	|E_{a_{rs}}\cap E_{f_k}|=2,$
	for $rs\in\{12,13,23\}$ and $4\leq k\leq n$. Moreover,
	$|E_h\cap E_{f_k}|=n-4,$
	and, for distinct $k,l\geq4$,
	$|E_{f_k}\cap E_{f_l}|=n-2.$
	Since $n$ is even, all of these latter quantities are even.
	Therefore, \eqref{eq:Theta-normalized} holds for every pair of
	distinct elements of $T$, and hence $S$ has the $\Theta$-property.
	
	Finally, suppose that $n=4$. Let
	$f_k=J+e_k$, where $1\leq k\leq4,$
	so that
	$T=\{f_1,f_2,f_3,f_4,0\}.$
	Each $f_k$ has odd weight, and for $k\neq l$,
	$|E_{f_k}\cap E_{f_l}|=2.$
	Also $E_0=\emptyset$, and hence $|E_0\cap E_{f_k}|=0$. Thus \eqref{eq:Theta-normalized} holds for every pair of distinct elements of $T$, so this set has the $\Theta$-property.

	The upper bound $M_\Theta(n)\leq2n+1$ follows from \Cref{thm:upper}, while the constructions above attain this bound.
\end{proof}

For completeness, $M_\Theta(0)=1$, $M_\Theta(1)=2$, and $M_\Theta(2)=3$. For $n=2$, the set $\{e_1,e_2,0\}$ has the $\Theta$-property, whereas the full set $\F_2^2$ does not: the three pairwise disjoint sets $\{0\}$, $\{e_1,e_2,e_1+e_2\}$, and $\emptyset$ have the same sum and two of them have odd size.

\begin{proposition}\label{prop:Theta-dim4-unique}
Every $9$-element subset of $\mathbb Z_2^4$ with the $\Theta$-property is linearly equivalent to
$\{e_1,e_2,e_3,e_4,0,J+e_1,J+e_2,J+e_3,J+e_4\},$
where $J=e_1+e_2+e_3+e_4.$
In particular, every such set contains $0$.
\end{proposition}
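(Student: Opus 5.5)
We prove this by turning the $\Theta$-property into a statement about binary self-dual codes, exactly as in the proof of \Cref{thm:upper}, and then invoking the classification of small self-dual codes. Let $S\subseteq\mathbb Z_2^4$ have $|S|=9$ and the $\Theta$-property.

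First, $S$ generates $\mathbb Z_2^4$. Otherwise $S$ lies in a subspace of dimension at most $3$, which has at most $8$ elements. Hence $\dim\mathcal D=9-4=5$. The code $\widehat{\mathcal D}\subseteq\F_2^{10}$ from the proof of \Cref{thm:upper} is then self-orthogonal of dimension $5=10/2$, so it is a self-dual $[10,5]$ code.

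Next we identify the weight-$2$ words of $\widehat{\mathcal D}$. The weight of $\widehat D$ is $|D|+(|D|\bmod 2)$. It equals $2$ only if $|D|=1$ or $|D|=2$. A dependency of size $2$ would be $\{a,b\}$ with $a+b=0$, so $a=b$, which is impossible. A dependency of size $1$ is exactly $D=\{0\}$. So $\widehat{\mathcal D}$ has a word of weight $2$ if and only if $0\in S$. We now use the classical fact that there is no self-dual binary $[10,5,4]$ code: every self-dual code of length $10$ is equivalent to $i_2^5$ or $e_8\oplus i_2$, and both have minimum weight $2$. It follows that $0\in S$. I expect this step, citing or reproving the nonexistence of a $[10,5,4]$ self-dual code, to be the main obstacle. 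To keep it self-contained, I would try an elementary argument: a doubly-even-free self-dual code of length $10$ with $d=4$ would need weight enumerator constraints from Gleason's theorem, and these force a negative coefficient.

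With $0\in S$, write $S'=S\setminus\{0\}$, so $|S'|=8$. Let
\[
C_8=\Bigl\{D\subseteq S':\textstyle\sum_{d\in D}d=0\Bigr\}.
\]
Every $D\in\mathcal D$ is either $D'$ or $D'\cup\{0\}$ with $D'\in C_8$. Hence $\widehat{\mathcal D}=\langle\widehat{\{0\}}\rangle\oplus C_8$, where $C_8$ sits on the $8$ coordinates of $S'$ and $\widehat{\{0\}}$ is supported on the two remaining coordinates. Self-duality of $\widehat{\mathcal D}$ therefore gives self-duality of $C_8\subseteq\F_2^8$. We also know $\dim C_8=4$, so $S'$ spans $\mathbb Z_2^4$.

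We now show $C_8$ has no word of weight $2$. Such a word is a $D\subseteq S'$ with $|D|=2$ and zero sum, i.e.\ $\{a,b\}$ with $a+b=0$, forcing $a=b$, which is impossible. So $C_8$ is a self-dual $[8,4,4]$ code. By the standard uniqueness result, it is the extended Hamming code $e_8$ up to a permutation of coordinates.

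Finally, let $G$ be the $4\times 8$ matrix whose columns are the elements of $S'$. Its kernel is $C_8=C_8^\perp$, so the row space of $G$ equals $C_8$. Thus $G$ is a generator matrix of a code equivalent to $e_8$. Any two generator matrices of the same code differ by left multiplication by an element of $\mathrm{GL}_4(\F_2)$, and permuting columns merely reorders $S'$. Hence $S'$ is determined up to linear equivalence. It remains to observe that $\{e_1,\ldots,e_4,J+e_1,\ldots,J+e_4\}$ realizes this code. By \Cref{thm:Theta-extremal}, together with the argument above, its $C_8$ is self-dual. Its minimum weight is $4$, since no two of the vectors are equal and none is $0$. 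So its $C_8$ is also $e_8$, and $S$ is linearly equivalent to the displayed set.
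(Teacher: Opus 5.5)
Your argument is correct, provided you cite the classical classification of self-dual codes of length $10$, and it takes a genuinely different route from the paper.

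The paper never leaves $\mathbb Z_2^4$. It normalizes $S=B\cup T$ and rewrites the $\Theta$-property as the pairwise condition $|E_t\cap E_u|\equiv(1+|t|)(1+|u|)\pmod 2$. A short case analysis on the weights $2,3,4$ then shows that $0\notin T$ would force $|T|\le 4$. Once $0\in T$, the same condition applied to the pair $(t,0)$ forces every other element of $T$ to have weight $3$, and there are exactly four such vectors.

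You instead treat $\widehat{\mathcal D}$ as a self-dual $[10,5]$ code and identify its weight-$2$ words with $\widehat{\{0\}}$. Two classification results then do the work: there is no self-dual $[10,5,4]$ code, and the self-dual $[8,4,4]$ code is unique. Your generator-matrix argument finishes the proof. The paper's proof is short and self-contained. Yours imports external theorems, but it is more conceptual and matches the self-dual-code viewpoint behind \Cref{prop:puncture}. It also suggests how extremal $\Theta$-sets in other dimensions could be classified via self-dual codes.

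Two points need attention.

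\textbf{The splitting of $\widehat{\mathcal D}$.} The decomposition $\widehat{\mathcal D}=\langle\widehat{\{0\}}\rangle\oplus C_8$, with $C_8$ supported on the coordinates of $S'$, does not follow merely from $D=D'$ or $D=D'\cup\{0\}$. If some $D'\in C_8$ had odd size, then $\widehat{D'}$ would have a $1$ in the parity coordinate. You need the observation that $\widehat{\{0\}}\cdot\widehat{D'}\equiv|D'|\pmod 2$ vanishes by self-orthogonality; equivalently, apply \Cref{lem:Theta-q-linear}(3) to $\{0\}$ and $D'$. Hence all dependencies of $S'$ have even size, and the splitting holds.

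\textbf{The Gleason fallback.} Your proposed self-contained proof that no self-dual $[10,5,4]$ code exists does not work. Write $W=(x^2+y^2)^5+b\,(x^2+y^2)x^2y^2(x^2-y^2)^2$ and impose $A_2=0$. This gives $b=-5$ and $W=x^{10}+15x^6y^4+15x^4y^6+y^{10}$, which has only nonnegative integer coefficients, so Gleason's theorem alone gives no contradiction.

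An elementary argument that does work:
\begin{enumerate}
\item Since $\mathbf 1\in C$, complementation gives $A_8=A_2=0$.
\item The map $c\mapsto |c|/2\bmod 2$ is linear on $C$, because $|a\cap b|$ is even for $a,b\in C$. It takes the value $1$ at $\mathbf 1$, since $|\mathbf 1|/2=5$.
\item Its kernel is therefore a $4$-dimensional subcode whose $15$ nonzero words all have weight $4$.
\item In a $4$-dimensional linear code, each coordinate in the support equals $1$ in exactly $8$ codewords. So the total weight $15\cdot 4=60$ would be a multiple of $8$, which is absurd.
\end{enumerate}
Alternatively, the shadow enumerator of this $W$ has the non-integral coefficient $5/2$, which also rules the code out.
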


\begin{proof}
A $9$-element subset of $\mathbb Z_2^4$ necessarily spans $\mathbb Z_2^4$. After applying a linear automorphism, assume that it contains the standard basis $B=\{e_1,e_2,e_3,e_4\}$, and set $S=B\cup T$, where $|T|=5$. For $t\in T$, let $E_t\subseteq B$ be the support of $t$. By \Cref{lem:Theta-q-linear}, for distinct $t,u\in T$,
\[
|E_t\cap E_u|=|D_t\cap D_u|\equiv |D_t| |D_u|\equiv(1+|t|)(1+|u|)\pmod2.
\]

Suppose first that $0\notin T$. Since all weight-1 vectors already belong to $B$, every element of $T$ has weight $2$, $3$, or $4$. The vector $J$ of weight $4$ is incompatible with every other possible element of $T$: with a vector of weight $2$ the left-hand side above is even while the right-hand side is odd, and with a vector of weight $3$ the left-hand side is odd while the right-hand side is even. Hence, $J\notin T$.

For two vectors of weight $2$, the condition says that their supports must intersect in exactly one coordinate. A vector of weight $3$ is compatible with a vector of weight $2$ precisely when its support contains the support of the weight-$2$ vector, while any two distinct vectors of weight $3$ are compatible. If $T$ contains no vector of weight $2$, then it contains at most the four vectors of weight $3$. If it contains one vector of weight $2$, at most two weight-$3$ vectors are compatible with it. If it contains two weight-$2$ vectors, their supports have union of size $3$, so at most one weight-$3$ vector is compatible with both. Finally, a pairwise-intersecting family of $2$-subsets of a $4$-set has size at most $3$; with three such vectors, at most one weight-$3$ vector can be compatible with all of them. In every case $|T|\leq4$, a contradiction.

Therefore, $0\in T$. Applying the displayed parity condition to $0$ and any $t\in T\setminus\{0\}$ gives
$0\equiv1+|t|\pmod2,$
so every such $t$ has odd weight. Since all weight-1 vectors already belong to $B$, every element of $T\setminus\{0\}$ has weight $3$. There are exactly four weight-$3$ vectors in $\mathbb Z_2^4$, namely $J+e_1,\ldots,J+e_4$, and all four must occur because $|T|=5$. This proves the claim.
\end{proof}

\subsection{The additional Sidon restriction}\label{sec:SmallSidon}

We now ask how much is lost if $S$ is also required to be Sidon. Define
\[
S_{\rm Sid}(n)=\max\bigl\{|S|:S\subseteq\mathbb Z_2^n\text{ is Sidon, generates }\mathbb Z_2^n,\text{ and has the }\Theta\text{-property}\bigr\}.
\]
By \Cref{thm:upper}, $S_{\rm Sid}(n)\leq M_\Theta(n)\leq2n+1$. Thus $2n+1$ is the absolute bound imposed by the $\Theta$-property itself, independently of Sidon. The Sidon extremal problem asks whether this absolute bound can still be attained after forbidding repeated pair sums.

Observe that $S$ has no dependency relation of size $2$; the Sidon property is equivalent to the absence of dependency relations of size $4$. If, in addition, $S$ has the $\Theta$-property, then there can be at most one dependency relation of size $3$. Indeed, if $D_1,D_2\in\mathcal D$ are two distinct dependency relations of size $3$, then \Cref{lem:Theta-q-linear} implies that $|D_1\cap D_2|$ is odd. Since they are distinct, $|D_1\cap D_2|=1$, and then $D_1\oplus D_2$ is a dependency relation of size $4$, contradicting the Sidon property.

\subsection{Exact Sidon values in small dimensions}

For $3\leq n\leq9$, the Sidon restriction prevents the absolute bound $2n+1$ from being attained. An exhaustive computer search gives the exact values of $S_{\rm Sid}(n)$ which are given in \Cref{tab:sid-values}. In each row, the displayed set $T$, together with the standard basis, gives a Sidon set with the $\Theta$-property of the stated size. We also include an example of size $21$ for $n=10$.

\begin{table}[htbp]
	\centering
	\begin{tabular}{c|c|l}
		$n$ & $S_{\rm Sid}(n)$ & $T$ \\ \midrule
		0  & 1  & $\{0\}$ \\
		1  & 2  & $\{0\}$ \\
		2  & 3  & $\{3\}$ \\
		3  & 4  & $\{6\}$ \\
		4  & 5  & $\{15\}$ \\
		5  & 7  & $\{30,31\}$ \\
		6  & 8  & $\{60,62\}$ \\
		7  & 10 & $\{113,126,127\}$ \\
		8  & 12 & $\{202,241,252,254\}$ \\
		9  & 15 & $\{165,329,399,497,510,511\}$ \\
		10 & 21 & $\{75,151,184,302,369,452,549,604,738,786,905\}$
	\end{tabular}
	\caption{Values of $S_{\rm Sid}(n)$ and the corresponding sets $T$.}
	\label{tab:sid-values}
\end{table}

Here an integer $a$ denotes the binary vector $\sum_{j=1}^n a_j e_j$, where $a=\sum_{j=1}^n a_j2^{j-1}$; thus the least significant bit corresponds to the first coordinate. The exhaustive search is performed after fixing the standard basis $B=\{e_1,\ldots,e_n\}$ and setting $S=B\cup T$. Candidate elements $t$ are added recursively. At each step, $t$ is retained only if the new pair sums $t+s$, $s\in S$, are distinct from all existing pair sums, and if, for every previously chosen $u\in T$,
\(
|E_t\cap E_u|\equiv (|t|+1)(|u|+1)\pmod2.
\)
By \Cref{lem:Theta-q-linear}, these are precisely the Sidon and $\Theta$-conditions.

\subsection{Sidon constructions attaining the absolute bound}

It is notable that the additional Sidon condition eventually causes no loss in extremal size. We prove this by using binary self-dual codes. We recall only the terminology needed here and refer to \cite{HP03} for general background.

A binary linear code of length $N$ is a subspace of $\F_2^N$. The weight $|x|$ of a vector $x$ is the number of its nonzero coordinates, and the minimum distance of a nonzero linear code $C$ is
\[
d(C)=\min\{|x|:x\in C,\ x\neq0\}.
\]
A code of length $N$, dimension $k$, and minimum distance $d$ is called an $[N,k,d]$ code. Its dual is
\(
C^\perp=\{y\in\F_2^N:x\cdot y=0\text{ for every }x\in C\},
\).

A code $C$ is said to be self-dual if $C=C^\perp$.

If equality holds in \Cref{thm:upper}, then $|S|=2n+1$ and $\widehat{\mathcal D}$ has dimension $(|S|+1)/2$, so the self-orthogonal space $\widehat{\mathcal D}$ is in fact self-dual. This suggests obtaining extremal Sidon sets from binary self-dual codes.

\begin{proposition}\label{prop:puncture}
Let $C$ be a binary self-dual $[2r,r,d]$ code with $d\geq6$. Then there exists a Sidon set $S\subseteq\F_2^{r-1}$ which generates $\F_2^{r-1}$, has the $\Theta$-property, and satisfies $|S|=2r-1$.
\end{proposition}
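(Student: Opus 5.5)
The plan is to reverse the proof of \Cref{thm:upper}. We start from the self-dual code $C$ and build $S$ so that $\widehat{\mathcal D}$ is exactly $C$. The construction is to puncture $C$ at one coordinate and take $S$ to be the set of columns of a parity-check matrix of the punctured code.

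First, fix a coordinate $p$ of $C$ and let $D\subseteq\F_2^{2r-1}$ be the code obtained by deleting coordinate $p$ from every codeword. Since $C$ is self-dual, each codeword $c$ satisfies $c\cdot c=0$, so $|c|$ is even. Hence for $c=(x,c_p)$ we get $c_p\equiv|x|\pmod 2$, so the deleted coordinate is just the parity of the remaining part. Since $d\ge 6>1$, puncturing is injective. So $D$ is a $[2r-1,r,\ge d-1]$ code, and $d-1\ge5$. Moreover, for $x\in D$ the vector $(x,|x|\bmod 2)$ is exactly the codeword of $C$ it came from.

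Next, let $H$ be an $(r-1)\times(2r-1)$ parity-check matrix of $D$, of full rank $r-1$ since $\dim D^\perp=r-1$. Let $S$ be the multiset of its columns. Then $S$ spans $\F_2^{r-1}$, and $\ker H=D$, so a subset of columns sums to $0$ exactly when its indicator vector lies in $D$. Because $D$ has minimum distance at least $5$, there is no dependency of size $1$ or $2$. So the columns are nonzero and pairwise distinct, $S$ is a genuine set, and $|S|=2r-1$. There is also no dependency of size $4$. Therefore, if $a+b=c+d$ with $a\ne b$, $c\ne d$ and $\{a,b\}\ne\{c,d\}$, then either the four elements are distinct, giving a forbidden size-$4$ dependency, or exactly one element is shared, giving a forbidden size-$2$ dependency. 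So $S$ is Sidon.

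For the $\Theta$-property, identify the dependency space $\mathcal D$ of $S$ with $D$. For $D_1,D_2\in\mathcal D$ the extended vectors $\widehat D_1,\widehat D_2$ are codewords of $C$, hence orthogonal, which gives $|D_1\cap D_2|+|D_1||D_2|\equiv0\pmod2$. By \Cref{lem:Theta-q-linear}(3), $S$ has the $\Theta$-property.

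The main point needing care is the bookkeeping that identifies the dependency space of the columns with $D$ and translates small minimum distance into distinctness, nonzeroness and the Sidon condition, including the degenerate overlapping cases of $a+b=c+d$. The hypothesis $d\ge6$ is exactly what makes the punctured distance at least $5$, which excludes dependencies of sizes $1$ through $4$ at once.
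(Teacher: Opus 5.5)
Your proof is correct and follows the paper's argument essentially step for step. You puncture $C$ at one coordinate and recover $C$ as $\{(x,|x| \bmod 2)\}$ using the even-weight property. You then take $S$ to be the columns of a parity-check matrix of the punctured code, so that dependency relations are codewords of weight at least $5$, and you deduce the $\Theta$-property from the self-orthogonality of $C$ via \Cref{lem:Theta-q-linear}(3). Your treatment of the multiset issue and of the overlapping cases in $a+b=c+d$ is slightly more careful than the paper's, but the route is the same.
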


\begin{proof}
Let $K$ be the code obtained from $C$ by deleting the last coordinate ($K\subseteq\F_2^{2r-1}$). Since $d(C)\geq6$, the projection of $C$ onto $K$ is injective, hence $\dim K=r$. Moreover, $d(K)\geq5$.

Every codeword of a binary self-dual code has even weight. Therefore the value at the deleted coordinate of a code word $x$ of $C$ is determined by the parity of the remaining coordinates. In other words if we add a new (last) coordinate to each code word of $K$, and set values of the new coordinate for $x$ to be $|x|$, then the result is $C$: 

\[
C=\widehat K=\{(x,|x|\bmod2):x\in K\}.
\]
Let $H$ be a parity-check matrix for $K$, that is to say $K=\{x \mid Hx=0\}$. Since $K$ has length $2r-1$ and dimension $r$, the matrix $H$ has rank $r-1$. Let $S$ be the set of its columns. Then $S\subseteq\F_2^{r-1}$, $|S|=2r-1$, and $S$ generates $\F_2^{r-1}$.

The dependency relations among the elements of $S$ are precisely the codewords of $K$. Since $d(K)\geq5$, there are no dependency relations of size $1$, $2$, $3$, or $4$. In particular, the columns of $H$ are nonzero and distinct and no two distinct pairs of columns have the same sum. Hence, $S$ is Sidon.

Finally, since $C=\widehat K$ is self-dual, it is self-orthogonal. Thus, for $x,y\in K$,
\[
0=(x,|x|\bmod2)\cdot(y,|y|\bmod2),
\]
and therefore $x\cdot y\equiv |x||y|\pmod2$. In terms of dependency relations this is precisely the condition of \Cref{lem:Theta-q-linear}, so $S$ has the $\Theta$-property.
\end{proof}

By \cite[Theorem~4]{CS90}, a binary self-dual code with minimum distance at least $6$ exists for every even length at least $22$. Thus, for every $r\geq11$, there exists a binary self-dual code with parameters $[2r,r,d]$ and $d\geq6$.

\begin{theorem}\label{thm:S-large}
For every $n\geq10$,
$S_{\rm Sid}(n)=M_\Theta(n)=2n+1.$
\end{theorem}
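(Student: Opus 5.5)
The plan is to squeeze $S_{\rm Sid}(n)$ between two bounds that are already available. For the upper bound, every Sidon set is in particular a set with the $\Theta$-property, so $S_{\rm Sid}(n)\leq M_\Theta(n)\leq 2n+1$ by \Cref{thm:upper}. Moreover, \Cref{thm:Theta-extremal} gives $M_\Theta(n)=2n+1$ for $n\geq3$. It therefore suffices to exhibit, for each $n\geq10$, a Sidon set of size $2n+1$ that generates $\F_2^n$ and has the $\Theta$-property.

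For $n\geq11$ I would use \Cref{prop:puncture} with $r=n+1\geq12$. By \cite[Theorem~4]{CS90}, as quoted just before the statement, there is a binary self-dual $[2r,r,d]$ code with $d\geq6$ for every $r\geq11$. \Cref{prop:puncture} then produces a Sidon set $S\subseteq\F_2^{r-1}=\F_2^{n}$ that generates $\F_2^n$, has the $\Theta$-property, and satisfies $|S|=2r-1=2n+1$. This settles every $n\geq11$; the value $r=11$ would give $n=10$, so the code route covers $n=10$ as well whenever a suitable $[22,11,6]$ self-dual code is available. The shortened Golay code, which is self-dual with $d=6$, is exactly such a code.

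To avoid depending on that fact, I would also treat $n=10$ directly using the explicit set in \Cref{tab:sid-values}, namely the standard basis together with the eleven listed vectors, for a total of $21=2\cdot10+1$ elements. It clearly generates $\F_2^{10}$. For the $\Theta$-property, I would check the pairwise parity condition $|E_t\cap E_u|\equiv(1+|t|)(1+|u|)\pmod 2$ over the $\binom{11}{2}$ pairs of elements of $T$. This suffices by bilinearity and \Cref{lem:Theta-q-linear}, as in the proof of \Cref{thm:Theta-extremal}. The Sidon property amounts to checking that no four elements sum to zero and that the elements are distinct and nonzero.

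The main obstacle is this finite verification for $n=10$. It is routine but tedious, and it is presumably what the computer search in \Cref{sec:SmallSidon} already confirms. Conceptually, the cleaner alternative is to observe that the $n=10$ example is exactly the construction of \Cref{prop:puncture} applied to a $[22,11,6]$ self-dual code, so both cases rest on the same mechanism.
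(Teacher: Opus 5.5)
Your proposal is correct and is essentially the paper's proof. You take the upper bound from \Cref{thm:upper}, the value $M_\Theta(n)=2n+1$ from \Cref{thm:Theta-extremal}, and the lower bound from \Cref{prop:puncture} with $r=n+1$ combined with \cite[Theorem~4]{CS90}. That result already covers $r=11$, so your separate treatment of $n=10$, including the unperformed finite check of the table example, is unnecessary.

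One small slip: a Sidon set need not have the $\Theta$-property. The inequality $S_{\rm Sid}(n)\leq M_\Theta(n)$ holds simply because the definition of $S_{\rm Sid}(n)$ itself requires the $\Theta$-property.
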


\begin{proof}
Let $n\geq10$, and set $r=n+1$. Then $r\geq11$, so there exists a binary self-dual code $C\subseteq\F_2^{2r}$ with minimum distance at least $6$. By \Cref{prop:puncture}, this gives a Sidon set $S\subseteq\F_2^{r-1}=\F_2^n$ which generates $\F_2^n$, has the $\Theta$-property, and has size $|S|=2r-1=2n+1$. The reverse inequality follows from \Cref{thm:upper}. Hence $S_{\rm Sid}(n)=2n+1$, and \Cref{thm:Theta-extremal} gives $M_\Theta(n)=2n+1$ as well.
\end{proof}

Thus the unrestricted and Sidon extremal problems have the same answer for every $n\geq10$, even though they differ in several small dimensions. In particular, the Sidon condition eventually imposes no loss in the maximum possible size of a generating set with the $\Theta$-property.

\section{Concluding remarks}

Our main focus was the structural study of cube-like graphs admitting certain complex signings. In the following subsection, we present the motivation that has led to this study and note how other settings may help with similar structural results. We then note a completely different direction of interest arising from this study.

\subsection{A generalization of the independence number}

Given a graph $G$ and an integer $j$ with $0\leq j\leq\Delta(G)$, we define $\alpha_j(G)$ to be the maximum order of a vertex set $V_j\subseteq V(G)$ such that
\(
    \Delta(G[V_j])\leq j,
\)
 where $G[V_j]$ is the subgraph induced by $V_j$.\\

By definition,
\(
    \alpha_0(G)\leq\alpha_1(G)\leq\cdots\leq\alpha_{\Delta(G)}(G).
\)
Moreover, $\alpha_0(G)=\alpha(G)$ is the classical independence number and $\alpha_{\Delta(G)}(G)=|V(G)|$.\\

Huang's result \cite{H19} can be restated as
\(
    \alpha_{\lceil\sqrt n\rceil-1}(H_n)=2^{n-1}.
\)\\

His proof uses an edge weighting ($\sigma_n$) resulting in only two eigenvalues then applies the Cauchy interlacing theorem; though it can be presented using only linear dependencies (see \cite{LNSW26}). Constructions in \cite{CFGS88} show that
\(
    \alpha_{\lceil\sqrt n\rceil}(H_n)\geq2^{n-1}+1,
\)
but the exact value of $\alpha_{\lceil\sqrt n\rceil}(H_n)$ is not known in general.

Alon and Zheng \cite{AZ20} extended Huang's spectral technique to cube-like graphs. In particular, their argument implies that every induced subgraph of a cube-like graph $Q_S$ on more than $2^{n-1}$ vertices has maximum degree at least $\sqrt{|S|}$. In our notation,
\(
    \alpha_{\lceil\sqrt{|S|}\rceil-1}(Q_S)\leq2^{n-1}.
\)

In this work, we show that the unitary-signing technique applies to every generating set $S$ with the $\Theta$-property; for Sidon sets, this condition is also necessary.

Although having very few distinct eigenvalues is useful for this problem, it is not the whole story. For example, the independence number of $PC(2k)$ can be bounded using the inertia of its adjacency matrix. The spectrum of the projective cube is well known; see, for example, \cite[Section~9.2D]{BCN89} and the references therein.  The exact value of the independence number is recorded in MathWorld \cite{MathWorldFoldedCube}, where it is attributed to an unpublished note of Godsil \cite{Godsil06}.

\begin{theorem}\label{thm:spectrum-PC}
For $k\geq1$, the distinct eigenvalues of $PC(2k)$ are
\(
    \lambda_r=2k+1-4r,\\
     0\leq r\leq k,
\)
where the multiplicity of $\lambda_r$ is
\(
    \binom{2k+1}{2r}.
\)
\end{theorem}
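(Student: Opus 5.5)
The plan is to compute the spectrum of $PC(2k)$ with characters, since it is the Cayley graph $Q_S$ on $\mathbb Z_2^{n}$, $n=2k$, with $S=\{e_1,\ldots,e_n,J\}$.

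For each $y\in\mathbb Z_2^n$ the character $\chi_y(x)=(-1)^{x\cdot y}$ is an eigenvector of the adjacency matrix. Its eigenvalue is
\[
\mu_y=\sum_{s\in S}(-1)^{s\cdot y}=\sum_{j=1}^n(-1)^{y_j}+(-1)^{|y|}=(n-2|y|)+(-1)^{|y|}.
\]
The $2^n$ characters are orthogonal, so they form a complete eigenbasis, and the spectrum is the multiset $\{\mu_y\}$.

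Now set $w=|y|$ and split by parity.
\begin{itemize}
\item If $w=2r$ is even, then $\mu_y=2k-4r+1=\lambda_r$.
\item If $w=2r-1$ is odd, then $\mu_y=2k-2(2r-1)-1=2k+1-4r=\lambda_r$.
\end{itemize}
So weight $0$ gives $\lambda_0$, weights $2r-1$ and $2r$ give $\lambda_r$ for $1\le r\le k-1$, and weights $2k-1$ and $2k$ give $\lambda_k$. Since $n=2k$, every weight is accounted for and the range is exactly $0\le r\le k$. The values $\lambda_r$ are pairwise distinct because they are strictly decreasing in $r$.

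The multiplicity of $\lambda_r$ is therefore $\binom{2k}{2r-1}+\binom{2k}{2r}=\binom{2k+1}{2r}$ by Pascal's rule. This also holds for $r=0$, where the count is $\binom{2k}{0}=1=\binom{2k+1}{0}$. As a check, $\sum_r\binom{2k+1}{2r}=2^{2k}$, so every eigenvector is counted once.

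There is no real obstacle here. The only step needing care is the bookkeeping at the ends of the range, namely $r=0$, which has a single weight class, and $r=k$, where $w=2k$ is the top weight. Both fit the general formula.
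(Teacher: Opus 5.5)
Your argument is correct and complete. The characters $\chi_y$ diagonalize the adjacency matrix of $Q_S$, and the eigenvalue $\mu_y=(2k-2|y|)+(-1)^{|y|}$ equals $\lambda_r$ both for $|y|=2r-1$ and for $|y|=2r$. Pascal's rule then gives the multiplicity $\binom{2k}{2r-1}+\binom{2k}{2r}=\binom{2k+1}{2r}$, and you handle the endpoints $r=0$ and $r=k$ properly.

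The paper itself gives no proof of this statement. It quotes the spectrum as well known, referring to \cite[Section~9.2D]{BCN89}, so your computation supplies a self-contained justification for something the paper takes from the literature.

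A variant closer to the paper's description of $PC(n)$ as the antipodal projection of $H_{n+1}$ is to regard $PC(2k)$ as the quotient of $H_{2k+1}$ by $x\mapsto x+\mathbf{1}$. Functions on $PC(2k)$ are then the antipodally invariant functions on $H_{2k+1}$. These are spanned by the characters $\chi_z$ of $\mathbb{Z}_2^{2k+1}$ with $|z|$ even, each having hypercube eigenvalue $2k+1-2|z|$. Writing $|z|=2r$ yields $\lambda_r$ with multiplicity $\binom{2k+1}{2r}$ at once, without merging two weight classes.

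Your direct version has the advantage of staying within the paper's own model $Q_S$ on $\mathbb{Z}_2^{2k}$. It also shows immediately that every $\lambda_r$ is odd, hence nonzero, which is exactly what the paper's subsequent inertia bound needs.
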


It follows that the numbers of positive and negative eigenvalues of $PC(2k)$ are
\[
    n_+\bigl(PC(2k)\bigr)
    =2^{2k-1}+(-1)^k\binom{2k-1}{k-1},
\quad
    n_-\bigl(PC(2k)\bigr)
    =2^{2k-1}-(-1)^k\binom{2k-1}{k-1}.
\]
Since none of the eigenvalues is zero, the inertia bound, which follows from the Cauchy interlacing theorem, gives
\(
    \alpha(PC(2k))
    \leq \min\{n_+,n_-\}
    =2^{2k-1}-\binom{2k-1}{k-1}.
\)

To see that the found is tight, choose a vertex $v$ of $PC(2k)$. If $k$ is even, the vertices at odd distance from $v$ form an independent set of this order; if $k$ is odd, the vertices at even distance from $v$ form such an independent set. Hence,
\(
    \alpha(PC(2k))
    =2^{2k-1}-\binom{2k-1}{k-1}.
\)

%
%
%
%
%
%
%
%
%
%
%
%

\subsection{Further questions on complex-signed graphs}

As noted in the introduction, complex-signed graphs may equivalently be viewed as gain graphs over the fourth roots of unity and hence as a special case of complex unit gain graphs; see, for example, \cite{SK21}. The signed-graph terminology used here is convenient for emphasizing the real/imaginary and positive/negative types of cycles, switching, and the associated Hermitian adjacency matrix.

Theories of minors, colorings, and homomorphisms of signed graphs have proved fruitful, in particular by strengthening connections between minor theory and coloring; see, for example, \cite{NRS15,JMNNQ26}. It is natural to ask analogous questions in the present four-valued setting. Such questions are also related to homomorphisms of mixed graphs; see, for example, \cite{AM98,NR00}.

For this purpose, a homomorphism from a complex-signed graph $(G,\sigma)$ to $(H,\pi)$ is a map $f:V(G)\to V(H)$ for which there is a signing $\sigma'$ switching equivalent to $\sigma$ such that $f$ preserves adjacency and $\pi(f(x)f(y))=\sigma'(xy)$
for every oriented adjacency $xy$ of $G$, with the usual convention that reversing an arc conjugates its assigned value. This leads to the following problem.

\begin{problem}
What is the smallest order of a complex-signed graph with no multiple edges to which every planar complex-signed graph admits a homomorphism?
\end{problem}

	\section*{Acknowledgements} Meirun Chen is supported by Fujian Provincial Department of Science and Technology (2024J011197). Reza Naserasr has received support under the program ``Investissement d'Avenir" launched by the French Government and implemented by ANR, with the reference ``ANR‐18‐IdEx‐0001" as part of its program ``Emergence". 
	
		\section*{Declaration on the use of AI} During the preparation of this paper, ChatGPT and Kimi, on basic plans, were used for proofreading, language editing, improving clarity and fluency, and computational work in \Cref{sec:SmallSidon}. Gemini, on a free plan, was used to create or edit TikZ codes for figures. The mathematical ideas, proofs, and all technical mathematical content are the authors' own work.

	\bibliographystyle{plain}
	\bibliography{references}

\end{document}